\documentclass[smallextended,envcountsame,numbook]{svjour3}

\usepackage{graphicx}
\usepackage[normalem]{ulem}
\usepackage{soul}
\usepackage{float}

\setstcolor{red}
\usepackage{amsmath}
\usepackage{paralist}
\usepackage{graphics} 
\usepackage{epsfig}
\usepackage{graphicx}  \usepackage{epstopdf}
\usepackage[colorlinks=true]{hyperref}
\hypersetup{urlcolor=blue, citecolor=red}

\usepackage{amsfonts,amssymb,stmaryrd,mathabx}
\usepackage{units}
\usepackage{subfigure}
\usepackage{times} 
\usepackage{hyperref}
\usepackage{amscd,empheq}
\usepackage{yfonts}
\usepackage{mathrsfs}
\usepackage{mathrsfs}
\usepackage[cmtip,arrow]{xy}
\usepackage{pb-diagram,pb-xy}
\usepackage{titletoc}
\usepackage{palatino}
\usepackage{bm}
\usepackage{color}
\usepackage{mathtools}
\usepackage{old-arrows}
\usepackage{dsfont}
\usepackage{multirow}
\usepackage[margin=2cm]{geometry}
\usepackage{adjustbox}

\usepackage{tikz}
\usepackage{tikz-cd}
\usepackage{multicol}
\usepackage{scalerel}
\usepackage{tikz}
\usetikzlibrary{backgrounds, patterns, intersections}

\pgfdeclarelayer{bg}
\pgfdeclarelayer{fg}
\pgfsetlayers{bg,main,fg}

\usepackage{aurical}
\usepackage[T1]{fontenc}

\usepackage{placeins}

\usepackage{enumitem}
\setlist{noitemsep,topsep=1pt,parsep=1pt,partopsep=2pt}

\numberwithin{equation}{section}

\newcommand\restr[2]{{
  \left.\kern-\nulldelimiterspace 
  #1 
  \vphantom{\big|} 
  \right|_{#2}
  }}

\DeclareMathSymbol{\Gamma}{\mathalpha}{operators}{"00}
\DeclareMathSymbol{\Delta}{\mathalpha}{operators}{"01}
\DeclareMathSymbol{\Theta}{\mathalpha}{operators}{"02}
\DeclareMathSymbol{\Lambda}{\mathalpha}{operators}{"03}
\DeclareMathSymbol{\Xi}{\mathalpha}{operators}{"04}
\DeclareMathSymbol{\Pi}{\mathalpha}{operators}{"05}
\DeclareMathSymbol{\Sigma}{\mathalpha}{operators}{"06}
\DeclareMathSymbol{\Upsilon}{\mathalpha}{operators}{"07}
\DeclareMathSymbol{\Phi}{\mathalpha}{operators}{"08}
\DeclareMathSymbol{\Psi}{\mathalpha}{operators}{"09}
\DeclareMathSymbol{\Omega}{\mathalpha}{operators}{"0A}

\newenvironment{fig}
{\begin{figure}[hbt]}
{\end{figure}}
\newcommand{\bfig}{\begin{fig}}
\newcommand{\efig}{\end{fig}}

\newcommand{\mvmap}{\raisebox{-0.2ex}{$\,\overrightarrow{\to}\,$}}

\newcommand{\R}{{\mathbb{R}}}
\newcommand{\T}{{\mathbb{T}}}
\newcommand{\Z}{{\mathbb{Z}}}

\newcommand{\calX}{{\mathcal X}}
\newcommand{\calF}{{\mathcal F}}
\newcommand{\calR}{{\mathcal R}}

\newcommand{\sO}{{\mathsf O}}
\newcommand{\sOc}{{\mathsf{ O}^\mathrm{clp}}}

\newcommand{\sAtt}{{\mathsf{ Att}}}

\newcommand{\sRep}{{\mathsf{ Rep}}}

\newcommand{\sANbhd}{{\mathsf{ ANbhd}}}
\newcommand{\sSANbhd}{{\mathsf{ SANbhd}}}
\newcommand{\sRNbhd}{{\mathsf{ RNbhd}}}

\newcommand{\sDS}{{\mathbf{DS}}}

\newcommand{\sBDLat}{{\mathbf{BDLat}}}

\newcommand{\sRC}{{\mathsf{RC}}}

\newcommand{\bphi}{{\bm{\phi}}}

\newcommand{\obphi}{{\bm{{\phi_{\bm *}}}}}
\newcommand{\obphio}{{\bm{{(\phi_{\bm 0})_{\bm *}}}}}
\newcommand{\obpsi}{{\bm{{\psi_{\bm *}}}}}

\newcommand{\iobphi}{{\bm{{\phi^{-1}_{\bm *}}}}}
\newcommand{\iobpsi}{{\bm{{\psi^{-1}_{\bm *}}}}}

\newcommand{\Chi}{\raise .75ex\hbox{$\chi$}}

\renewcommand{\hat}{\widehat}

\newcommand{\vgln}{\lb\begin{array}{rcl}}
\newcommand{\eindvgln}{\end{array}\right.}

\newcommand{\cl}{{\rm cl}\,}

\newcommand{\diam}{{\rm diam}\,}
\newcommand{\Int}{{\rm int\,}}
\newcommand{\Inv}{\mbox{\rm Inv}}

\renewcommand{\star}{\mathop{\rm star }}

\newcommand{\id}{\mathop{\rm id }\nolimits}

\usepackage{thmboxes}
\newtheoremstyle{plain}
  {-\topsep}    
  {}            
  {\normalfont} 
  {}            
  {\bfseries}   
  {.}           
  {.5em}        
  {}            
\surroundwithmdframed[style=theoremframe]{theorem}
\surroundwithmdframed[style=theoremframe]{proposition}
\surroundwithmdframed[style=theoremframe]{corollary}
\surroundwithmdframed[style=theoremframe]{lemma}
\surroundwithmdframed[style=definitionframe]{definition}
\surroundwithmdframed[style=exampleframe]{example}

\begin{document}

\title{Compositionality of Global Dynamics in Product and Skew-Product Systems}

\titlerunning{Compositionality of Global Dynamics in Product and Skew-Product Systems}        

\author{
        William D. Kalies \and Tony Wehbe
}

\institute{
           William D. Kalies \at
              University of Toledo\\
              \email{William.Kalies@utoledo.edu}
              \and
              Tony Wehbe \at
              University of Toledo\\
              \email{Tony.Wehbe@rockets.utoledo.edu}
}

\date{Version date: \today}

\maketitle

\begin{abstract}
We study the compositionality of global dynamics through attractor lattices and order structures of recurrent dynamics in product and skew-product systems using Conley theory. For product systems, these structures can be characterized algebraically in terms of the structure of component systems, where we prove that the attractor lattice of the direct product of two flows is isomorphic to the coproduct of the attractor lattices of the component flows. We also consider fast-slow, skew-product systems that arise from singular perturbation of a parameterized dynamical system. These results provide a framework for decomposing global dynamics into lower-dimensional subsystems and suggest computational approaches for constructing Conley--Morse representations through composition.
\end{abstract}

\begin{acknowledgements}
This work was partially supported by the Air Force Office of Scientific Research  under awards  FA9550-23-1-0011 and FA9550-23-1-0400.
We like to thank Sophie Libkind and David Spivak for discussions on this work and the Topos Institute for the support of T.W.
\end{acknowledgements}

\keywords{
Conley theory,
attractor,
product systems,
skew-product systems,
chain recurrent components,
lattice coproducts,
global dynamics,
Conley--Morse graphs,
computational dynamics,
compositionality.
}

\section{Introduction}

Dynamical systems can exhibit complex, small-scale behavior, and sensitivity to perturbation or parameter changes can itself can occur on fine scales. As a result, finding descriptions of asymptotic dynamics that are both accurate and robust is a central challenge for complex, multiscale systems in science and engineering. Computationally, this typically means trading off accuracy and robustness against the precision needed to capture fine detail.

Traditional methods utilize explicit differential or difference equation models.
Such models are often built using heuristic nonlinearities and parameters, and methods to reduce the dimension or complexity of the model for computational efficiency can be employed \cite{Bramburger}.
Many applications are data-driven, where parameters, and even the nonlinearities themselves, can be derived from data \cite{Sindy}. Machine learning methods can be used to  create black-box models or to reduce dimensionality, such as autoencoder methods \cite{Morals}. 
In all of these contexts, there are challenges to validating the dynamical descriptions that are extracted from such models. 

A computational framework for coarse, robust, and accurate descriptions of global dynamics has been developed using Conley theory, see \cite{database,MR3388721,MR4298671,gameiro2024globaldynamicsordinarydifferential,Gameiro2025,15M1052743,pcbi1006121} and the references therein. References to the mathematical foundations of this framework are listed in Section~\ref{sec:prelim}. 
The goal of the present work is to consider compositionality within the context of Conley theory.
As described in Section~\ref{sec:prelim}, the structure of global dynamics of a system can be characterized through its lattice of attractors. Here we address the question of how these structures compose under direct products and skew-products of dynamical systems. In future work, these results can be incorporated into the computational Conley framework, where they can provide methods for reducing dimensionality and improving computational efficiency.

We first prove some general results that describe how the algebraic and order structures of global dynamics behave with respect projection that apply to both product and skew-product systems. Specifically, Diagram~\ref{diag:projection-attractor} establishes a commutative diagram relating the attractors of the base system to attractors of the composed system via embedding of cylinder attractors and projection. Also, Theorem~\ref{thm:RC_projection} establishes an order surjection of the (chain-)recurrent components of the composed system onto the recurrent components of base system that is determined by projection.

For product systems, the main results are theorems relating the attractor lattice of a product system to the attractor lattices of the factor systems. In particular the attractor lattice of each factor is embedded into the attractor lattice of the product 
via the cylinder maps $A_1\mapsto A_1\times \Inv(Y)$ and $A_2\mapsto \Inv(X)\times A_2$. 
The coproduct of these embedded factors $L_1\amalg L_2$ is then the sublattice of attractors that are generated from products of attractors, $A_1\times A_2$, see Propositions~\ref{prop:Lwedgevee-generated} and \ref{prop:attcoprod}. From Theorem~\ref{cor_flows_att_iso}, the attractor lattice of the product of two flows is equal to the coproduct $L_1\amalg L_2$, but this need not be the case for a product of maps, see Example~\ref{ex:coproduct_map}.

The coproduct result relies on the duality between attractor lattices and recurrent components. The recurrent components of a product system map as posets onto the product of recurrent components of the factors, and this map is an isomorphism for flows, see Theorem~\ref{thm:FlowRCIsomorphism}. The proof of this fact utilizes the combinatorial multivalued map characterization of (chain-~)recurrence in \cite{KMV-0}, which is central to the computational Conley framework. 
Though not formally stated here, the analog of Equation~\ref{eq:RCinc} holds for a product of combinatorial, multivalued maps. Thus a Conley-Morse graph for a product of multivalued maps can computed from Conley-Morse graphs of its factors. Computational applications will be considered in future work.

Lifting theorems for skew-product systems have been studied in a variety contexts; in the topological setting see \cite{MR448325,MR3082469} and the references therein. These works study structure theorems that relate dynamical properties of sets in the base system, such as minimality or Morse decompositions, to dynamics of the skew-product. The focus of our results is to relate the dynamics of a skew-product system, $\Phi(t,(x,y))=(\phi(t,(x,y)),\psi(t,y))$, where $\phi$ satisfies the cocycle property, see Section~\ref{sec-composition} for full definition, to the dynamics of its base system, $\psi(t,y)$, and the parametrized system,
$(\phi(t,(x,y)),y)$, where there is no dynamics on the base. 

The attractors of the parametrized system have a sheaf structure, as described in \cite{DKV}. Here we use this structure to compute a cascade product of pairs $(A,\sigma|_A)$ where $A$ is an attractor for the base system $\psi$, and $\sigma|_A$ is the restriction of a section of the attractor sheaf of the parametrized system  to $A$. We show that this structure provides a model for attractors in the case where the dynamics on the base is slow enough, see Theorems~\ref{thm:minimal-realization} and \ref{thm:comparison-with-meet-image}. Such fast-slow systems have been extensively studied. For related singular perturbation results from the point of view of Conley theory, see \cite{MR2350241,MR1693854,MR2228698}. 

From a computational viewpoint, the problem  decouples into first computing a finite lattice of attractors in the base dynamics and then computing sections of the parametrized system over the base attractors. This avoids direct computations in the higher-dimensional product phase space, which could 
be more efficient.
Indeed, there is a  cellular sheaf implementation in \cite{Dowling} to compute sections in the context of regulatory network dynamics, as in \cite{gameiro2024globaldynamicsordinarydifferential}. This will be a subject of future work.

From the applied category theory perspective, 
 notions of compositionality have been explored through open dynamical systems, stock-and-flow diagrams, and other contexts \cite{LermanSpivak2016,Baez2023,FongSpivak2019,AduddellFairbanks2024}. In particular, in \cite{Libkind2023} Libkind develops a compositional modeling framework utilizing the sheaf-theoretic continuation in \cite{DKV}. See Section~\ref{subsec:lax-joins} for a description of the connection to the cascade product.

In Section~\ref{sec:prelim}, we review the elements of Conley that are necessary for the results. In Section~\ref{sec-composition}, we prove some basic results about attractors and recurrence in composed systems. In Section~\ref{sec-PAtt}, we establish that for a direct product of flows, the attractor lattice of the product system is isomorphic to the coproduct of the attractor lattices of the factor systems. In Section~\ref{sec:skew}, we consider fast-slow, skew-product systems that arise from singular perturbation of a parameterized dynamical system.
In Section~\ref{sec:examples}, we show some examples that illustrate the results for skew-product flows.

\section{Preliminaries: Global Dynamics and Conley Theory}\label{sec:prelim}

In this section we first review the background from Conley theory, global dynamics, lattice theory, and continuation sheaves. We then develop the projection and embedding structures associated with composed dynamical systems. 

The global structure of a dynamical system can be described in terms of recurrent versus nonrecurrent dynamics \cite{Poincare1890ActaMath,smale1960morse,Conley}.
This dichotomy is determined by the order structure of the attractors, which capture the asymptotic dynamics of regions in phase space. In this section we review the part of the theory that is needed for the results of this paper. For a detailed presentation see \cite{KMV-0,KMV-1a,KMV-1b,KMV-1c,KKV,KV25,DKV} and the references therein.

A \emph{dynamical system} on a compact metric space $X$ is a function $\phi\colon \T^+\times X \to X$ that satisfies $\phi(0,x) = x$ for all $x\in X$ and $\phi(t,\bigl(\phi(s,x)\bigr) = \phi(t+s,x)$ for all $s,t\in \T^+$ and all $x\in X$, where $\T$ is either $\Z$ or $\R$. We denote the space of all dynamical systems over $X$ with time $\T^+$ as $\sDS(X,\T^+)$ with the uniform topology.  Discrete-time systems in $\sDS(X,\Z^+)$ are referred to as \emph{maps}, since the dynamics is generated by iterating the map $f\colon X\to X$ given by $f(x)=\phi(1,x).$ 
Continuous-time systems in $\sDS(X,\R^+)$ are referred to as \emph{(semi)flows} and are typically generated from the solutions to differential equations, in particular ODE's given by $\dot x=v(x)$ for some smooth vector field $v$ on $X\subset\R^d$. 

\vskip 6pt
\noindent
{\bf Note that we do not assume that the system is injective nor surjective.} 
\vskip 6pt

An \emph{(complete) orbit} of $\phi$ through $x_0\in X$ is a map $\gamma\colon\T\to X$ such that $\gamma(0)=x_0$ and $\gamma(t+s)=\phi(t,\gamma(s))$ for all $t,s\in\T.$ If $\gamma$ is defined only on $\T^+$, then it is a \emph{forward orbit}. 
A set $S\subset X$ is \emph{forward invariant} if $\phi(t,S)\subset S$ for all $t\ge 0$ and \emph{invariant} if $\phi(t,S)=S$ for all $t\ge 0$. Furthermore define $\Inv(U) := \bigcup\{S\subset U\mid S \text{ is invariant}\}$ and  $\Inv^+(U) := \bigcup\{S\subset U\mid S\text{ is forward invariant}\}$. A set is invariant if it is the union of (complete) orbits. See \cite{KMV-1a} for more details.

\subsection{Attractors and dual repellers}\label{subsec:AR}

Given a set of initial states, characterizing the set of states that can be reached asymptotically as the system evolves forward in time is of primary importance. This leads to the following definition.

\begin{definition}\label{def:omega}
For $U\subset X$ the \emph{omega-limit set of $U$} is defined by
\[
\omega(U)\footnote{It is common notation to write $\omega(x)$ instead of $\omega(\{x\})$}=\bigcap_{t\ge 0}\cl(\phi([t,\infty),U))=\{x\in X~|~\exists x_n\in U, t_n\to\infty \hbox{ s.t. } \phi(t_n,x_n)\to x\}.
\]
\end{definition}

The omega-limit set consists of all limit points of $\phi(t,U)$ as $t\to\infty.$ One important property of such limit sets is that they are compact and invariant. Even further, if $U$ is eventually forward invariant, ie.\ there exists $\tau\ge 0$ such that $\phi(t,U)\subset U$ for all $t\ge \tau$, then $\omega(U)=\Inv(U),$ \cite[Prop.~2.11]{KMV-1a}. An important example of such sets are attracting neighborhoods as in the following definition. 

\begin{definition}
\label{defnattfornoncomp}
A subset $U\subset X$ is a \emph{attracting neighborhood} if there exists a $\tau>0$ such that $\phi(t,\cl(U)) \subset \Int(U)$  for all $t\ge \tau$. A set $A\subset X$ is called an \emph{attractor} if there exists an attracting neighborhood $U$ such that $A=\omega(U)$. 
The \emph{dual repeller} of an attractor $A$ in this setting can be defined as $A^*:=\{x\in X~|~\omega(x)\cap A=\emptyset\}.$ The sets of all attracting neighborhoods and all attractors of $\phi$ are denoted by $\sANbhd(\phi)$ and $\sAtt(\phi)$ respectively.
\end{definition}

\noindent{\bf For the remainder of this paper, we typically use $\Inv$ instead of $\omega$ when applied to an attracting neighborhood.} 
\vskip 6pt

Attractor--repeller duality is expressed in the following commutative diagram:
\begin{equation}\label{diag:AR-duality}
\begin{tikzcd}[column sep=4.5em, row sep=3.5em]
\sANbhd(\phi)
  \arrow[r, "{c}"]
  \arrow[d, "{\omega(\cdot,\phi)=\Inv(\cdot,\phi)}"']
&
\sRNbhd(\phi)
  \arrow[l]
  \arrow[d, "{\Inv^+(\cdot,\phi)}"]
\\
\sAtt(\phi)
  \arrow[r, "{*}"]
&
\sRep(\phi)
  \arrow[l] 
\end{tikzcd}
\end{equation}
Here \(c\) denotes set complement, and \(*\) denotes attractor--repeller duality. Both maps are involutions, and the diagram commutes by \cite[Prop.~4.6--4.7]{KMV-1a}. The sets $\sRNbhd$ and $\sRep$ are the sets of all repelling neighborhoods and all repellers respectively, which can be defined independently of attracting neighborhoods and attractors, but the above diagram can serve as a definition of these sets as the images of the complement and duality maps. 

\subsection {Lattice structure of attractors}\label{subsec:AttLat}

The set of attractors, $\sAtt(\phi)$, for a dynamical system $\phi$, has an algebraic structure. The union of attractors is an attractor, and the omega-limit of the intersection of attractors is an attractor. In fact, $\sAtt(\phi)$ forms a bounded, distributive lattice with these operations $\vee=\cup$ and $\wedge=\omega(\cdot\cap\cdot)$. The same structure holds for the set of all attracting neighborhoods, $\sANbhd(\phi)$, under $\vee=\cup$ and $\wedge=\cap$, with $\omega\colon\sANbhd(\phi)\to\sAtt(\phi)$ a surjective, lattice homomorphism. 
Note that $\emptyset$ is both an attractor and an attracting neighborhood with $\Inv(\emptyset)=\emptyset.$ Also, $X\in\sANbhd(\phi)$, and $\Inv(X)$ is the maximal attractor\footnotemark{}. \footnotetext{All bounded, distributive lattices in this paper have maximal and minimal elements, and all homomorphisms preserve these elements. } 
For more details on the basic definitions and algebraic structures of attractors, see \cite{KMV-1a,KV25}.

In particular, $(\sAtt(\phi),\le)$ is a poset, where in any bounded distributive lattice $a\le b$ if $b=a\vee b$, or equivalently $a=a\wedge b$. For the lattices $\sANbhd(\phi)$ and $\sAtt(\phi)$, this order is simply set inclusion. Throughout the paper, we represent such posets by their \emph{Hasse diagrams}: vertices are attractors, edges indicate minimal inclusions, the diagram is drawn upward in the direction of the order, and transitive edges are omitted. The following example makes this explicit.

\begin{example}\label{eq:bistable-example}
Consider the flow on $X=[-1,1]$ defined by
\[
  \dot x = x-x^3.
\]
Ordered by inclusion, $\sANbhd(\phi)$ and $\sAtt(\phi)$ form bounded distributive lattices. The phase line and the Hasse diagrams of $(\sANbhd(\phi),\subset)$ and $(\sAtt(\phi),\subset)$ are shown in Figure~\ref{fig:bistable-hasse}.

\begin{center}

\begin{minipage}{0.30\linewidth}
    \centering
    \begin{tikzpicture}[scale=1, every node/.style={scale=1}]
      \draw (-1,0) -- (1,0);

      \fill (-1,0) circle (1.5pt) node[below] {$-1$};
      \fill (0,0) circle (1.5pt) node[below] {$0$};
      \fill (1,0) circle (1.5pt) node[below] {$1$};

      \draw[->] (-0.1,0) -- (-0.5,0);
      \draw[->] (0.1,0) -- (0.5,0);
    \end{tikzpicture}
\end{minipage}
\hfill
\begin{minipage}{0.30\linewidth}
    \centering
    \begin{tikzpicture}[every node/.style={scale=1}]
      \node (bot) at (0,0) {$\emptyset$};
      \node (n1) at (-1,1.2) {$[-1,-0.5]$};
      \node (n2) at (1,1.2) {$[0.5,1]$};
      \node (mid) at (0,2.4) {$[-1,-0.5]\cup[0.5,1]$};
      \node (top) at (0,3.6) {$[-1,1]$};

      \draw[->] (bot) -- node[sloped,above] {$\supset$} (n1);
      \draw[->] (bot) -- node[sloped,above] {$\subset$} (n2);

      \draw[->] (n1) -- node[sloped,above] {$\subset$} (mid);
      \draw[->] (n2) -- node[sloped,above] {$\supset$} (mid);

      \draw[->] (mid) -- node[sloped,above] {$\subset$} (top);
    \end{tikzpicture}
\end{minipage}
\hfill
\begin{minipage}{0.30\linewidth}
    \centering
    \begin{tikzpicture}[every node/.style={scale=1}]
      \node (bot) at (0,0) {$\emptyset$};
      \node (a1) at (-1,1.2) {$\{-1\}$};
      \node (a2) at (1,1.2) {$\{1\}$};
      \node (mid) at (0,2.4) {$\{-1,1\}$};
      \node (top) at (0,3.6) {$[-1,1]$};

      \draw[->] (bot) -- node[sloped,above] {$\supset$} (a1);
      \draw[->] (bot) -- node[sloped,above] {$\subset$} (a2);

      \draw[->] (a1) -- node[sloped,above] {$\subset$} (mid);
      \draw[->] (a2) -- node[sloped,above] {$\supset$} (mid);

      \draw[->] (mid) -- node[sloped,above] {$\subset$} (top);
    \end{tikzpicture}
\end{minipage}

\captionof{figure}{
\textbf{Left:} Phase line.
\textbf{Middle:} A sublattice of attracting neighborhoods.
\textbf{Right:} Lattice of attractors for the bistable flow~\eqref{eq:bistable-example}.
}
\label{fig:bistable-hasse}

\end{center}

\end{example}

$\sRNbhd(\phi)$ and $\sRep(\phi)$ are also bounded, distributive lattices with operations $\cup,\cap$. Moreover, the vertical maps in Diagram~\ref{diag:AR-duality} are lattice homomorphisms, and the horizontal maps are lattice anti-isomorphisms, and hence order is reversed.

\subsection{Order structure of recurrent components}\label{subsec:order}

For finite distributive lattices, Birkhoff’s theorem provides a dual equivalence with finite posets as categories. For finite attractor lattices, this duality can be realized dynamically: $\sAtt(\phi)$ is dual to a certain finite poset, $\sRC(\phi)$, of invariant sets for which the order is dynamically defined.
In general, an attractor lattice $\sAtt(\phi)$ can be (countably) infinite. In this case, the relevant duality is between bounded, distributive lattices and certain ordered topological spaces called Priestley spaces,  \cite{KV25}. We briefly describe the dynamical significance of these structures.

To illustrate how this order structure organizes global dynamical behavior, we consider an \emph{attractor-repeller pair}  $(A,A^*)$. 
For $x\in X\setminus(A\cup A^*)$ and an orbit $\gamma$ with $\gamma(0)=x$, the limit sets\footnotemark{} $\omega(x)\subset A$ and $\alpha_o(\gamma)\subset A^*$,  see \cite{Conley,KMV-1a}. \footnotetext{$\alpha_o(\gamma)$ denotes the \emph{orbital} backward limit set $\{z~|~\exists t_n\to-\infty \hbox{ s.t. } \gamma(t_n)\to z\}$. If no complete orbit exists, then $\gamma$ is a forward orbit with just $\omega(x)\subset A.$ Recall that we do not assume $\phi$ is injective nor surjective.}
Any such point is nonrecurrent in the sense that there exist a neighborhood $V\ni x$ and time $T>0$ such that $\gamma([T,\infty))\cap V=\emptyset$ and $\gamma((-\infty,-T])\cap V=\emptyset$.
This motivates the following weaker notion of the \emph{(chain) recurrent set}, see \cite{Conley},
\[
\mathcal{R}(\phi)=\bigcap_{A\in\sAtt(\phi)}(A\cup A^*).
\]

The \emph{recurrent components} $\sRC(\phi)$ are the equivalence classes of $\mathcal{R}(\phi)$ under the relation: $x\sim x'$ if for every $A\in\sAtt(\varphi)$ either $x,x'\in A$ or $x,x'\in A^*$. The partial order on $\sRC(\phi)$ is defined by: $\xi \le \xi'$ if for every $A\in\sAtt(\varphi)$, $\xi'\subset A$ implies $\xi\subset A$, see \cite[Eq.~(4.5), Prop.~4.11]{KV25}. The dichotomy between recurrent  dynamics on $\mathcal{R}$ and nonrecurrent dynamics on its complement  is Conley's Fundamental Theorem of Dynamical Systems, \cite{Conley,Robinson}. Furthermore, the recurrent components, $\sRC(\phi)$, organize the asymptotic behavior of orbits through the partial order, ie.\ for every orbit $\gamma$ with $\gamma(0)=x$ there exist $\xi_+\le\xi_-$,  such that $\omega(x)\subset \xi_+$ and $\alpha_o(\gamma)\subset\xi_-.$  See \cite[Thm.~6.10]{KV25}.

We briefly recall the order-theoretic language used later in the paper. Let \(L\) be a bounded distributive lattice. An \emph{ideal} \(I\subset L\) is a nonempty subset that is downward closed and closed under finite joins. It is \emph{prime} if its complement \(I^c=L\setminus I\) is a filter; equivalently, \(I^c\) is upward closed and closed under finite meets. The {\em spectrum} of $L$, denoted by \(\Sigma L\), is the set of prime ideals of \(L\), equipped with its Priestley topology and order. The functor \(L\mapsto \Sigma L\) is contravariant. For a dynamical system \(\phi\), the bounded distributive lattice \(\sAtt(\phi)\) therefore has an associated Priestley spectrum \(\Sigma\sAtt(\phi)\). The results of \cite{KV25} identify this spectrum with the poset of recurrent components \(\sRC(\phi)\). More explicitly, if \(I\in\Sigma\sAtt(\phi)\), then the corresponding recurrent component is
\[
\Psi_\phi(I)= \left(\bigcap_{A\in I^c} A\right)\cap\left(\bigcap_{A\in I} A^*\right).
\]
Conversely, if \(\xi\in\sRC(\phi)\), then its associated prime ideal is \(I_\xi=\{A\in\sAtt(\phi)\mid \xi\not\subset A\}.\) Thus the Priestley spectrum records exactly the same recurrent decomposition as \(\sRC(\phi)\) but in order-theoretic form.

\subsection{Sheaf structure for parametrized dynamical systems}\label{subsec:sheaf}

A key feature of characterizing global dynamics from the perspective of attractor structures is robustness with respect to perturbation.  

The defining property of an attracting neighborhood $U\subset X$ is an open condition in $\sDS(X,\T^+)$, ie.\ $U$ is an attracting neighborhood for an open set of dynamical systems. If $U$ is an attracting neighborhood for both $\phi,\psi$ then the attractors $\omega_\phi(U)$ and $\omega_\psi(U)$ are locally related by continuation. This local continuation property can be extended to a sheaf structure through the \'{e}tal\'{e} space $\mathrm{\Pi}[\sAtt]$ over $\sDS(X,\T^+)$ with elements
\[
\mathrm{\Pi}[\sAtt]=\{(\phi,A)~|~\phi\in\sDS(X,\T^+), A\in\sAtt(\phi)\}.
\]
and topologized by the subbasis of sets whose elements are related by continuation locally through some attracting neighborhood, as described in \cite{DKV}. Let $\pi\colon\Pi[\sAtt]\to\sDS(X,\T^+)$ be the projection map. A continuous map $\sigma\colon\Omega\to\mathrm{\Pi}[\sAtt]$ with $\Omega$ open in $\sDS(X,\T^+)$ and $\pi\circ\sigma=\mathsf{id}$ is a \emph{section} of $\mathrm{\Pi}[\sAtt]$. Sections characterize broader continuation: two attractors $A_\phi,A_\psi$ are \emph{strongly\footnote{Continuation is determined by the quasicomponents of the \'{e}tal\'{e} space, rather than sections, but if there are only finitely many sections, the two notions coincide.} related by continuation} if $(\phi,A_\phi)$ and $(\psi,A_\psi)$ lie in the image of some section $\sigma$ of $\Pi[\sAtt]$. Note that the sections of an \'{e}tal\'{e} space form a \emph{sheaf}. 

For applications, this sheaf structure can be restricted to perturbations with respect to parameters.

\begin{definition} A \emph{parametrized dynamical system over $\Lambda$} on $X$ is a continuous function $\bphi\colon \T^+\times X\times \Lambda \to X$ such that $\bphi(\cdot,\cdot,\lambda)\colon \T^+\times X\to X$ is a dynamical system for each $\lambda\in \Lambda$. This defines a continuous map $\obphi\colon \Lambda\to\sDS(X,\T^+).$ 
\end{definition}

The \'{e}tal\'{e} space associated with the attractor structure of  a parameterized system $\bphi$ is 
\[
(\obphi)^{-1}\mathrm{\Pi}[\sAtt]=\{(\lambda,\bphi(\cdot,\cdot,\lambda),A)~|~\lambda\in \Lambda, A\in\sAtt(\bphi(\cdot,\cdot,\lambda)) 
\}.
\] 
This \'{e}tal\'{e} space is an invariant of the system as in the following theorem.

\begin{theorem}
\label{thm:CIT}
{\bf(Conjugacy Invariance Theorem, \cite[Thm.~8.7]{DKV})} 
Let $X$, $Z$ be compact metric spaces. Suppose $\obphi\colon \mathrm{\Lambda} \to \sDS(\T, X)$ and $\obpsi\colon \mathrm{\Lambda} \to \sDS(\T, Z)$ are conjugate parametrized dynamical systems. Then, the \'{e}tal\'{e} spaces $\iobphi\mathrm{\Pi}[\sAtt]$ and $\iobpsi\mathrm{\Pi}[\sAtt]$ are homeomorphic.
\end{theorem}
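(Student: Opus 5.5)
The plan is to build the homeomorphism directly from the conjugacy. By hypothesis there is a homeomorphism $h\colon X\to Z$ with $h\circ\bphi(t,\cdot,\lambda)=\bpsi(t,h(\cdot),\lambda)$ for all $t\in\T^+$ and $\lambda\in\Lambda$. (If the conjugacy is allowed to vary with $\lambda$, take a continuous family $h_\lambda$; every step below is then done fiberwise and uses continuity in $\lambda$.) The candidate map is
\[
H\colon \iobphi\mathrm{\Pi}[\sAtt]\to\iobpsi\mathrm{\Pi}[\sAtt],\qquad H(\lambda,\bphi_\lambda,A)=(\lambda,\bpsi_\lambda,h(A)).
\]

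First I will check that $H$ is well defined and bijective on each fiber. Since $h$ is a homeomorphism, it sends closures to closures and interiors to interiors. Hence $\bphi_\lambda(t,\cl U)\subset\Int U$ for $t\ge\tau$ implies $\bpsi_\lambda(t,\cl h(U))\subset\Int h(U)$, so $h$ maps attracting neighborhoods to attracting neighborhoods. From the sequential description of $\omega$ in Definition~\ref{def:omega}, $h(\omega_{\bphi_\lambda}(U))=\omega_{\bpsi_\lambda}(h(U))$, so $h$ carries $\sAtt(\bphi_\lambda)$ into $\sAtt(\bpsi_\lambda)$. Applying the same argument to $h^{-1}$ gives the inverse. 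Therefore $H$ is a bijection commuting with the projections to $\Lambda$, and it restricts on each fiber to a lattice isomorphism.

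Next comes continuity, which is the main step. The topology is generated by the subbasic sets of \cite{DKV}. Such a set consists of the points $(\mu,\bphi_\mu,\omega_{\bphi_\mu}(U))$ with $\mu$ in an open set $W\subset\Lambda$ on which $U$ is a common attracting neighborhood; the description in \cite{DKV} may restrict to $\mu$ near a base point. I will show that $H$ maps each such subbasic set onto the corresponding subbasic set for $\bpsi$ determined by $h(U)$ and the same $W$. The key point is the previous step applied uniformly in $\mu\in W$: $h(U)$ is an attracting neighborhood for $\bpsi_\mu$ if and only if $U$ is one for $\bphi_\mu$. Since $\Lambda$ is unchanged, $H$ is open and continuous, and so is $H^{-1}$ by symmetry.

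The hardest part is matching the exact form of the subbasis in \cite{DKV}. If that topology is defined by pulling back along $\obphi$ from the \'etal\'e space over $\sDS(X,\T^+)$, I expect to need the following fact. Conjugation by $h$ induces a homeomorphism $\sDS(X,\T^+)\to\sDS(Z,\T^+)$, given by $\phi\mapsto h\circ\phi(t,h^{-1}(\cdot))$, which is uniformly continuous because $X$ and $Z$ are compact. This homeomorphism lifts to the \'etal\'e spaces as above. The theorem then follows by naturality of the pullback, because $\obpsi$ equals $\obphi$ composed with this conjugation map.
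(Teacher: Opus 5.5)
The paper gives no proof of this statement; it quotes \cite[Thm.~8.7]{DKV}. Your argument is correct and complete when the conjugacy is a single homeomorphism $h\colon X\to Z$. In that case the pullback topology is generated by the sets $\{(\mu,\phi_\mu,\omega_{\phi_\mu}(U)) : \mu\in W\}$ with $U\in\sANbhd(\phi_\mu)$ for all $\mu\in W$. The assignment $U\mapsto h(U)$ carries this generating family bijectively onto the one for $\psi$, so $H$ and $H^{-1}$ are both open. Your pullback-naturality variant also works in this case. However, the natural notion of conjugacy for families, and the one that gives the theorem content, is a continuous family $\lambda\mapsto h_\lambda$. With a single $h$, a family conjugate to a constant family must itself be constant. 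Your parenthetical claim that the $\lambda$-dependent case goes through ``fiberwise'' is where the proof fails.

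With $h_\lambda$ varying, $H(\lambda,\phi_\lambda,A)=(\lambda,\psi_\lambda,h_\lambda(A))$ is still a fiberwise lattice isomorphism. But it sends the set determined by $(U,W)$ to $\{(\mu,\psi_\mu,\omega_{\psi_\mu}(h_\mu(U))) : \mu\in W\}$, in which the neighborhood moves with $\mu$. That is not a generating set. The pullback variant is also unavailable, because there is no single conjugation map $c$ with $\obpsi=c\circ\obphi$.

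The missing step is a local comparison. For $\mu_0\in W$ and $V:=h_{\mu_0}(U)$, you need an open $W_0\ni\mu_0$ such that $V\in\sANbhd(\psi_\mu)$ and $\omega_{\psi_\mu}(V)=\omega_{\psi_\mu}(h_\mu(U))$ for all $\mu\in W_0$. The first part is stability of attracting neighborhoods plus continuity of $\obpsi$. The second part has real content, since two attracting neighborhoods of one system can capture different attractors. It follows from three facts:
\begin{itemize}
\item upper semicontinuity of the attractors, which holds because invariance gives $\omega_{\phi_\mu}(U)\subset\phi_\mu(T,\cl U)$ for every $T$, and similarly for $\psi_\mu$ and $V$;
\item uniform convergence $h_\mu^{\pm 1}\to h_{\mu_0}^{\pm 1}$ as $\mu\to\mu_0$, which uses compactness of $X$ and $Z$;
\item \cite[Lemma~3.4]{KMV-1a}.
\end{itemize}
The first two facts show that $V\cap h_\mu(U)$ is a neighborhood of both attractors and lies inside both closures. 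The third then makes it an attracting neighborhood whose attractor equals each of them. The symmetric argument with $h_\mu^{-1}$ handles $H^{-1}$.
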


In \cite{DKV}, the relationship between this sheaf structure and  bifurcation is explored. We use this structure in Section~\ref{sec:skew} to characterize the attractor lattice in a fast-slow, skew-product system.

\section{Composing Dynamical Systems}
\label{sec-composition}

The main goal of this paper is to understand how attractor lattices and recurrent-component structures behave under composition of dynamical systems. We consider two constructions: products and skew-products.

Let $\psi\colon \T^+\times Y\to Y$ be a dynamical system on a compact metric space $Y$.

\begin{enumerate}
\item[(i)] If $\phi\colon \T^+\times X\to X$ is a dynamical system on a compact metric space $X$, then the \emph{product system} \(\phi\times\psi \colon \T^+\times X\times Y\to X\times Y\) is defined by \( (\phi\times\psi)(t,(x,y)) := (\phi(t,x),\psi(t,y)).\)

\item[(ii)] If $\phi\colon \T^+\times X\times Y\to X$ is a parametrized dynamical system over $Y$, then the \emph{skew-product system} \( \phi\rtimes\psi \colon \T^+\times X\times Y\to X\times Y \) is defined by \( (\phi\rtimes\psi)(t,(x,y)) := (\phi(t,x,y),\psi(t,y)), \) where $\phi$ satisfies the cocycle property \( \phi(t+s,x,y)=\phi\bigl(t,\phi(s,x,y),\psi(s,y)\bigr).\)
\end{enumerate}
We write \( \phi\bullet\psi \) when the same result applies to either a product or skew-product system, and we write \( \pi_Y\colon X\times Y\to Y \) for the projection onto the base.

\subsection{Composition and attractors}
\label{subsec-composition}

Let $\hat \iota\colon\sANbhd(\psi)\to \sANbhd(\phi\bullet\psi)$ be defined by $\hat \iota(U)=X\times U$ and $\iota \colon\sAtt(\psi)\to \sAtt(\phi\bullet\psi)$ be defined by $\iota(A)\mapsto \Inv(X\times A)$. These maps identify the base attractor structure as a  sublattice of the composed system. 
In this section we prove that $\hat \iota$ and $\iota$ are lattice embeddings with $\pi_Y\circ\hat \iota=\mathrm{id}_{\sANbhd(Y)}$ and $\pi_Y\circ \iota=\mathrm{id}_{\sAtt(Y)}$ where $\pi_Y$ is a join homomorphism in both cases. Moreover, the following diagram commutes:

\begin{equation}\label{diag:projection-attractor}
\begin{tikzcd}[column sep=3em, row sep=3em]
  \sANbhd(\psi)
    \arrow[r, hook, "\hat{\iota}"]
    \arrow[d, "\omega=\Inv"']
    \arrow[rr, bend left, "\mathrm{id}"']
  &
  \sANbhd(\phi \bullet \psi)
    \arrow[r, "\pi_Y"]
    \arrow[d, "\omega=\Inv"']
  &
  \sANbhd(\psi)
    \arrow[d, "\omega=\Inv"]
  \\
  \sAtt(\psi)
    \arrow[r, hook, "\iota"']
    \arrow[rr, bend right, "\mathrm{id}"]
  &
  \sAtt(\phi \bullet \psi)
    \arrow[r, "\pi_Y"']
  &
  \sAtt(\psi)
\end{tikzcd}
\end{equation}

In the case of the product $\phi\times\psi$ we have $\iota(A)=\Inv(X)\times A$, and the diagram is also valid when $\psi, \pi_Y$ are replaced by $\phi, \pi_X$ respectively and $\iota(A)=\Inv(A\times Y)=A\times \Inv(Y).$ These characterizations follow from the invariance properties of product systems developed in Section~\ref{sec-PAtt}. The key observation underlying the projection structure is that $\omega$-limits commute with projection.

\begin{lemma}\label{lem-omega-base}
Let $U\subset X\times Y$. Then
\[
\pi_Y(\omega(U))=\omega(\pi_Y(U)).
\]
\end{lemma}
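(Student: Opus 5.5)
The plan is to prove the two inclusions directly from the sequential characterization of $\omega$ in Definition~\ref{def:omega}, using two facts: $\pi_Y$ is continuous, and $X$ is compact. The only structural input from the composed system is the semiconjugacy
\[
\pi_Y\bigl((\phi\bullet\psi)(t,(x,y))\bigr)=\psi(t,y)=\psi\bigl(t,\pi_Y(x,y)\bigr),
\]
which holds for both the product and the skew-product, since the second coordinate evolves independently of the first. Throughout, $\omega(U)$ is taken for $\phi\bullet\psi$ and $\omega(\pi_Y(U))$ for $\psi$.

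For $\pi_Y(\omega(U))\subset\omega(\pi_Y(U))$, I would take $(x,y)\in\omega(U)$. By Definition~\ref{def:omega} there are $(x_n,y_n)\in U$ and $t_n\to\infty$ with $(\phi\bullet\psi)(t_n,(x_n,y_n))\to(x,y)$. Applying $\pi_Y$ and using the semiconjugacy gives $\psi(t_n,y_n)\to y$. Since $y_n\in\pi_Y(U)$, this shows $y\in\omega(\pi_Y(U))$.

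For the reverse inclusion, I would take $y\in\omega(\pi_Y(U))$. Then there are $y_n\in\pi_Y(U)$ and $t_n\to\infty$ with $\psi(t_n,y_n)\to y$. For each $n$, choose $x_n$ with $(x_n,y_n)\in U$. Consider the points $(\phi\bullet\psi)(t_n,(x_n,y_n))$. Their second coordinates are $\psi(t_n,y_n)$, and their first coordinates lie in the compact space $X$. Passing to a subsequence, the first coordinates converge to some $x\in X$, while the second coordinates still converge to $y$. Then $(x,y)$ is a limit of the required form, so $(x,y)\in\omega(U)$ and $y=\pi_Y(x,y)\in\pi_Y(\omega(U))$.

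The main obstacle is this second inclusion. It requires lifting a limit point of the base to an actual point of $\omega(U)$, and that is where compactness of $X$ is used; without it the lift could escape to infinity. A secondary point to keep track of is that the lifting must be done by choosing arbitrary preimages $x_n$ and then extracting a subsequence, rather than by any continuity of a section. The skew-product case needs no extra care, because the cocycle form still leaves the $Y$-component equal to $\psi$. As a consistency check, the case $U=\emptyset$ is trivial, since both sides are empty.
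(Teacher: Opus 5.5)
Your proof is correct and follows essentially the same argument as the paper. The reverse inclusion is exactly the paper's step of choosing preimages $x_n$ and using compactness of $X$ to extract a convergent subsequence. For the forward inclusion, the paper uses the closure-intersection form of $\omega$ to get $\omega(U)\subset X\times\omega(\pi_Y(U))$, while you use the equivalent sequential form, which is an immaterial difference.
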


\begin{proof}
Note that $\omega(U)=\omega(\cl(U))$ so we may assume $U$ is compact.
\[
\begin{aligned}
\omega(U)
&=\bigcap_{t\ge 0} \cl\big((\phi\bullet\psi)([t,\infty),U)\big)
\subset \bigcap_{t\ge 0} \cl\big(X\times \psi([t,\infty),\pi_Y(U))\big)\\[2mm]
&= X \times \Big(\bigcap_{t\ge 0} \cl(\psi([t,\infty),\pi_Y(U)))\Big)
= X \times \omega(\pi_Y(U)),
\end{aligned}
\]
so $\pi_Y(\omega(U))\subset \omega(\pi_Y(U))$.

Conversely, if $y\in\omega(\pi_Y(U))$, then there exists $t_n\to\infty$ with $\psi(t_n,y_n)\to y$. Choose $x_n\in X$ such that $(x_n,y_n)\in U$. By compactness, a subsequence satisfies $\phi\bullet\psi(t_{n_k},(x_{n_k},y_{n_k}))\to z\in\omega(U)$ with $\pi_Y(z)=y$. Hence $y\in\pi_Y(\omega(U))$, so $\omega(\pi_Y(U))\subset\pi_Y(\omega(U)).$
\end{proof}

\begin{remark}\label{rem:alpha}
The analogue of Lemma~\ref{lem-omega-base} for $\alpha$-limit sets can be proved by a similar argument in backward time, ie.\ 
\(\pi_Y(\alpha(U))=\alpha(\pi_Y(U))\). However, unlike $\omega$-limit sets, $\alpha$-limit sets may be empty even if $U\neq\emptyset$. In general, additional care is required when treating backward orbits, and for $U\in\sRNbhd$ we have $\alpha(U)=\Inv^+(U)$, see Section~2 in \cite{KMV-1a} for a discussion of $\alpha$-limit sets and repellers.
\end{remark}

\begin{proposition}\label{prop-proj_att}
If $N$ is an attracting neighborhood for $\phi\bullet\psi$ with corresponding attractor $A$, then $\pi_Y(N)$ is an attracting neighborhood with $\pi_Y(A)$ as the corresponding attractor.
\end{proposition}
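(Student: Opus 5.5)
The plan is to verify the defining inequality of an attracting neighborhood directly for $\pi_Y(N)$, and then to identify its $\omega$-limit set using Lemma~\ref{lem-omega-base}. The basic tool is the equivariance of the projection:
\[
\pi_Y\bigl((\phi\bullet\psi)(t,z)\bigr)=\psi(t,\pi_Y(z))\quad\text{for all } t\in\T^+,\ z\in X\times Y.
\]
This holds both for the product $\phi\times\psi$ and for the skew-product $\phi\rtimes\psi$, since in each case the second coordinate evolves by $\psi$ alone. Consequently, for every set $S\subset X\times Y$ we have $\pi_Y((\phi\bullet\psi)(t,S))=\psi(t,\pi_Y(S))$.

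First I would handle the topological bookkeeping, which is the only real obstacle: relating $\cl$ and $\Int$ to the projection.
\begin{enumerate}
\item Since $X$ is compact, $\pi_Y\colon X\times Y\to Y$ is a closed map. Hence $\pi_Y(\cl(N))$ is a closed set containing $\pi_Y(N)$, which gives $\cl(\pi_Y(N))\subset\pi_Y(\cl(N))$. Continuity of $\pi_Y$ gives the reverse inclusion, so $\cl(\pi_Y(N))=\pi_Y(\cl(N))$.
\item Projections are open maps, so $\pi_Y(\Int(N))$ is an open subset of $\pi_Y(N)$. Therefore $\pi_Y(\Int(N))\subset\Int(\pi_Y(N))$.
\end{enumerate}

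Now let $\tau>0$ be such that $(\phi\bullet\psi)(t,\cl(N))\subset\Int(N)$ for all $t\ge\tau$. Using (1), equivariance, and (2), for every $t\ge\tau$ we get
\[
\psi\bigl(t,\cl(\pi_Y(N))\bigr)=\psi\bigl(t,\pi_Y(\cl N)\bigr)=\pi_Y\bigl((\phi\bullet\psi)(t,\cl N)\bigr)\subset\pi_Y(\Int N)\subset\Int(\pi_Y(N)).
\]
So $\pi_Y(N)$ is an attracting neighborhood for $\psi$, with the same $\tau$.

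Finally, the attractor determined by $\pi_Y(N)$ is $\omega(\pi_Y(N))$. By Lemma~\ref{lem-omega-base} this equals $\pi_Y(\omega(N))=\pi_Y(A)$, and it coincides with $\Inv(\pi_Y(N))$ because $\pi_Y(N)$ is an attracting neighborhood. I expect no difficulty here beyond the closure/interior step; the compactness of $X$ is used exactly there, as it is in the proof of Lemma~\ref{lem-omega-base}.
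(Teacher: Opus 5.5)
Your proposal is correct and follows essentially the same route as the paper's proof. Both arguments use the chain $\psi(t,\cl(\pi_Y(N)))=\pi_Y((\phi\bullet\psi)(t,\cl N))\subset\pi_Y(\Int N)\subset\Int(\pi_Y(N))$ and then Lemma~\ref{lem-omega-base} to identify the attractor; you additionally justify $\cl(\pi_Y(N))=\pi_Y(\cl N)$ via closedness of $\pi_Y$ (compactness of $X$) and the interior inclusion via openness of projections, both of which the paper simply asserts.
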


\begin{proof}
Choose \( \tau>0 \) such that \( (\phi\bullet\psi)(t,\cl N)\subset \Int N \) for \( t\ge\tau. \) Note that \( \cl(\pi_Y(N))=\pi_Y(\cl(N)) \). For \( t\ge\tau \) we have 
\[ 
\psi(t,\cl(\pi_Y(N)))=\psi(t,\pi_Y(\cl N))=\pi_Y((\phi\bullet\psi)(t,\cl N))\subset\pi_Y(\Int N)\subset\Int(\pi_Y(N)), 
\] 
which implies that \( \pi_Y(N) \) is an attracting neighborhood for $\psi$. In addition, by Lemma~\ref{lem-omega-base}, $\omega_\psi(\pi_Y(N))=\pi_Y(\omega_{\phi\bullet\psi}(N))=\pi_Y(A).$ Therefore $\pi_Y(A)$ is the corresponding attractor of $\pi_Y(N)$.
\end{proof}

\begin{remark}\label{rmk:product-proj-both-coordinates}
In the product case $\phi\times\psi$, the same argument applies to either coordinate. Hence if $N\in\sANbhd(\phi\times\psi)$ has attractor $A$, then $\pi_X(N)$ and $\pi_Y(N)$ are attracting neighborhoods with corresponding attractors $\pi_X(A)$ and $\pi_Y(A)$, respectively.
\end{remark}

\begin{proposition}\label{prop:projection-join}
The projection
\[
\pi_Y:\sANbhd(\phi\bullet\psi)\to \sANbhd(\psi)
\]
is a join-semilattice homomorphism. Moreover, the induced map \(\pi_Y:\sAtt(\phi\bullet\psi)\to \sAtt(\psi)\) is also a join-semilattice homomorphism.
\end{proposition}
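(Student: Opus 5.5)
Both maps are well defined by Proposition~\ref{prop-proj_att}: $\pi_Y(N)$ is an attracting neighborhood for $\psi$ whenever $N\in\sANbhd(\phi\bullet\psi)$, and $\pi_Y(A)\in\sAtt(\psi)$ whenever $A\in\sAtt(\phi\bullet\psi)$. The plan is to check that both maps preserve finite joins and the bottom element $\emptyset$. The top element is handled separately at the end. Joins are unions in both lattices, so the main tool is the set-theoretic identity $\pi_Y(S\cup T)=\pi_Y(S)\cup\pi_Y(T)$, valid for any $S,T\subset X\times Y$.

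For attracting neighborhoods there is essentially nothing more to do. Given $N_1,N_2\in\sANbhd(\phi\bullet\psi)$, the join is $N_1\vee N_2=N_1\cup N_2$. Hence $\pi_Y(N_1\vee N_2)=\pi_Y(N_1)\cup\pi_Y(N_2)=\pi_Y(N_1)\vee\pi_Y(N_2)$ in $\sANbhd(\psi)$. Also $\pi_Y(\emptyset)=\emptyset$, so the bottom is preserved.

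For attractors, let $A_1,A_2\in\sAtt(\phi\bullet\psi)$. Their join is again the union, so $\pi_Y(A_1\cup A_2)=\pi_Y(A_1)\cup\pi_Y(A_2)$, and the right-hand side is the join in $\sAtt(\psi)$. The one point to confirm is that this union really is the attractor-lattice join in $\sAtt(\psi)$, not merely some set. This follows because unions of attractors are attractors (Section~\ref{subsec:AttLat}), and also from the neighborhood level: if $A_i=\omega(N_i)$, then by Lemma~\ref{lem-omega-base},
\[
\pi_Y(A_1\cup A_2)=\pi_Y(\omega(N_1\cup N_2))=\omega(\pi_Y(N_1)\cup\pi_Y(N_2)).
\]
This shows the right-hand square of Diagram~\ref{diag:projection-attractor} commutes with respect to joins.

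The only subtle step is the top element, which the paper's footnote requires homomorphisms to preserve. Applying Lemma~\ref{lem-omega-base} with $U=X\times Y$ gives $\pi_Y(\Inv(X\times Y))=\pi_Y(\omega(X\times Y))=\omega(Y)=\Inv(Y)$. Likewise $\pi_Y(X\times Y)=Y$ for neighborhoods. So the maximal elements are preserved as well.

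Meets are not claimed, and in general they fail: for neighborhoods $\pi_Y(N_1\cap N_2)\subsetneq\pi_Y(N_1)\cap\pi_Y(N_2)$ can occur. The statement therefore concerns only the join-semilattice structure, and the proof is complete once joins, $\emptyset$, and the top element have been checked.
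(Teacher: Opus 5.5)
Your proof is correct and takes the same route as the paper: projection commutes with unions, and Lemma~\ref{lem-omega-base} (via Proposition~\ref{prop-proj_att}) carries this from attracting neighborhoods to attractors. Your explicit check that $\emptyset$ and the top elements are preserved ($X\times Y\mapsto Y$ and $\Inv(X\times Y)\mapsto\Inv(Y)$) is a small addition the paper leaves implicit.
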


\begin{proof}
In general, for any collection of subsets \( \{ W_n \}_{n \in I} \subset X \times Y \),
\[
\pi_Y \left( \bigcup_{n \in I} W_n \right) = \bigcup_{n \in I} \pi_Y(W_n).
\]
So, projection preserves unions, and $\pi_Y$ is a join-homomorphism on $\sANbhd(\phi\bullet\psi)$. The commutativity with $\omega$ follows from Lemma~\ref{lem-omega-base}. Since the structure on $\sAtt$ is induced from that on $\sANbhd$ by $\omega$, the induced map on attractors preserves joins.
\end{proof}

\begin{remark}
In general, $\pi_Y$ need not preserve meets, and hence $\pi_Y$ is not necessarily a full lattice homomorphism.
\end{remark}

\begin{proposition}\label{prop:embedding}
Define
\[
\hat\iota_Y:\sANbhd(\psi)\to \sANbhd(\phi \bullet \psi), \qquad \hat\iota_Y(U)=X\times U,
\]
and
\[
\iota_Y:\sAtt(\psi)\to \sAtt(\phi \bullet \psi), \qquad \iota_Y(A)=\Inv(X\times A).
\]
Then $\hat\iota_Y$ and $\iota_Y$ are lattice embeddings. Moreover,
\[
\pi_Y\circ \hat\iota_Y=\id_{\sANbhd(\psi)}, \qquad \pi_Y\circ \iota_Y=\id_{\sAtt(\psi)}.
\]
\end{proposition}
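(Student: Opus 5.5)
We first verify that $\hat\iota_Y$ is well defined, i.e.\ that it sends attracting neighborhoods to attracting neighborhoods. If $U\in\sANbhd(\psi)$ with $\psi(t,\cl U)\subset\Int U$ for $t\ge\tau$, then $\cl(X\times U)=X\times\cl U$ and $\Int(X\times U)=X\times\Int U$. Since the second coordinate of $(\phi\bullet\psi)(t,(x,y))$ is $\psi(t,y)$ in both the product and the skew-product case, we get $(\phi\bullet\psi)(t,X\times\cl U)\subset X\times\psi(t,\cl U)\subset X\times\Int U$ for $t\ge\tau$. Hence $X\times U\in\sANbhd(\phi\bullet\psi)$.

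Next we check the algebraic properties of $\hat\iota_Y$. We have $X\times(U\cup V)=(X\times U)\cup(X\times V)$ and $X\times(U\cap V)=(X\times U)\cap(X\times V)$. Also $X\times\emptyset=\emptyset$ and $X\times Y$ is the top element. So $\hat\iota_Y$ is a bounded lattice homomorphism. Since $X\neq\emptyset$, we have $\pi_Y(X\times U)=U$, which gives $\pi_Y\circ\hat\iota_Y=\id$. Injectivity follows at once from this left inverse.

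For $\iota_Y$, the first step is to show that it is well defined and makes Diagram~\ref{diag:projection-attractor} commute. Given $A\in\sAtt(\psi)$, choose $U\in\sANbhd(\psi)$ with $A=\Inv(U)$. We show that $\Inv(X\times U)=\Inv(X\times A)$.
\begin{itemize}
\item The inclusion $\supset$ is immediate from $A\subset U$.
\item For $\subset$, let $S\subset X\times U$ be invariant. Then $\pi_Y(S)$ is invariant for $\psi$, because $\psi(t,\pi_Y S)=\pi_Y((\phi\bullet\psi)(t,S))=\pi_Y S$. It lies in $U$, so $\pi_Y(S)\subset\Inv(U)=A$, and hence $S\subset X\times A$.
\end{itemize}
Consequently $\iota_Y(A)=\Inv(\hat\iota_Y(U))=\omega(X\times U)$ is an attractor. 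It is independent of the choice of $U$, and $\omega\circ\hat\iota_Y=\iota_Y\circ\omega$.

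By Lemma~\ref{lem-omega-base}, $\pi_Y(\iota_Y(A))=\pi_Y(\omega(X\times U))=\omega(U)=A$. So $\pi_Y\circ\iota_Y=\id$, and injectivity of $\iota_Y$ follows.

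The homomorphism property of $\iota_Y$ is the step we expect to need the most care. We derive it from surjectivity of $\omega\colon\sANbhd\to\sAtt$ together with the fact that $\omega$ is a lattice homomorphism on both sides. For $A=\omega(U)$ and $B=\omega(V)$ we compute
\[
\iota_Y(A\vee B)=\iota_Y(\omega(U\cup V))=\omega(\hat\iota_Y(U\cup V))=\omega(\hat\iota_Y U)\vee\omega(\hat\iota_Y V)=\iota_Y(A)\vee\iota_Y(B).
\]
The identical computation with $\cap$ and $\wedge$ gives preservation of meets. Preservation of the bounds follows from $\iota_Y(\emptyset)=\emptyset$ and $\iota_Y(\Inv(Y))=\Inv(X\times Y)$, the latter by the invariant-set argument above with $U=Y$.

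The only subtle point is the independence of the representative $U$. This is handled by the invariant-set argument, which relies solely on the base coordinate evolving by $\psi$, and so it is uniform across the product and skew-product cases.
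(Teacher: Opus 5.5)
Your proof is correct and follows the same route as the paper: $\hat\iota_Y$ preserves $\cup,\cap$ set-theoretically, the left inverse $\pi_Y$ comes from Lemma~\ref{lem-omega-base} and gives injectivity, and the homomorphism property of $\iota_Y$ is transported through the lattice homomorphism $\omega$. You also verify two points the paper uses only implicitly: that $X\times U$ is an attracting neighborhood, and that $\Inv(X\times U)=\Inv(X\times A)$, so $\iota_Y$ does not depend on the choice of $U$; both additions are correct.
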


\begin{proof}
For $U,V\subset Y$, we have
\[
\hat\iota_Y(U\cup V)=X\times(U\cup V) =(X\times U)\cup(X\times V),
\]
\[
\hat\iota_Y(U\cap V)=X\times(U\cap V) =(X\times U)\cap(X\times V).
\]
Thus $\hat\iota_Y$ preserves joins and meets. Moreover, \( \pi_Y(\hat\iota_Y(U))=\pi_Y(X\times U)=U, \) so \( \pi_Y\circ \hat\iota_Y=\id_{\sANbhd(\psi)}. \) In particular, $\hat\iota_Y$ is injective and hence a lattice embedding.

For attractors, let $A\in\sAtt(\psi)$ and choose $U\in\sANbhd(\psi)$ such that $A=\omega(U)$. Then \( \iota_Y(A):=\Inv(X\times U)=\omega(X\times U). \)
By Lemma~\ref{lem-omega-base}, \( \pi_Y(\iota_Y(A)) =\pi_Y(\omega(X\times U)) =\omega(\pi_Y(X\times U)) =\omega(U)=A, \) and hence \( \pi_Y\circ \iota_Y=\id_{\sAtt(\psi)}.\)
In particular, $\iota_Y$ is injective. Since $\iota_Y$ is induced from $\hat\iota_Y$ via $\omega$, it preserves joins and meets, and is  a lattice embedding.
\end{proof}

\begin{remark}\label{rem:product_inv}
From the structure of a product system, it follows immediately that $\Gamma\colon\mathbb{T}\to X\times Y$ is an (forward) orbit of $\Phi$ if and only if $\Gamma_{(x,y)}=\gamma_x\times\gamma_y$, a product of (forward) orbits. Hence, $\Inv(U\times V)=\Inv(U)\times\Inv(V),$ which implies $\iota_Y(A)=\Inv(X)\times A,$ and $\Inv^+(U\times V)=\Inv^+(U)\times\Inv^+(V)$.
\end{remark}

\begin{remark}\label{rmk:dual-repeller-diagram}
By Remarks~\ref{rem:alpha} and \ref{rem:product_inv}, all of the preceding results admit an corresponding formulation for repelling neighborhoods and repellers. Thus we obtain the commutative diagram:
\[
\begin{tikzcd}[column sep=3em, row sep=3em]
  \sRNbhd(\psi)
    \arrow[r, hook, "\hat{\iota}_Y"]
    \arrow[d, "\Inv^+"']
    \arrow[rr, bend left, "\mathrm{id}"']
  &
  \sRNbhd(\phi \bullet \psi)
    \arrow[r, "\pi_Y"]
    \arrow[d, "\Inv^+"']
  &
  \sRNbhd(\psi)
    \arrow[d, "\Inv^+"]
  \\
  \sRep(\psi)
    \arrow[r, hook, "\iota_Y"']
    \arrow[rr, bend right, "\mathrm{id}"]
  &
  \sRep(\phi \bullet \psi)
    \arrow[r, "\pi_Y"']
  &
  \sRep(\psi)
\end{tikzcd}
\]
Here $\hat{\iota}_Y\colon \sRNbhd(\psi)\hookrightarrow \sRNbhd(\phi\bullet\psi)$ is defined by $\hat{\iota}_Y(U)=X\times U,$ and $\iota_Y\colon \sRep(\psi)\hookrightarrow \sRep(\phi\bullet\psi)$ is defined by $\iota_Y(R)=\Inv^+(X\times R)=X\times R.$
\end{remark}

We end this section with a technical proposition stating some properties of dual repellers in product systems.

\begin{proposition}\label{prop:product_rep}
Let $\phi\times\psi$ be a product system and $Q\in\sAtt(\phi\times\psi)$.\\
(i) $Q=\iota_Y(\pi_Y(Q))$ if and only if $Q=\Inv(X)\times \pi_Y(Q)$;\\
(ii) if $Q=Q_1\times Q_2$, then $Q^*=(X\times Q_2^*)\cup(Q_1^*\times Y)$;\\
(iii) if $Q=\iota_Y(\pi_Y(Q))$, then $Q^*=X\times \pi_Y(Q)^*$ and  $\pi_Y(Q^*)=\pi_Y(Q)^*$;\\
(iv) If $Q\neq\iota_Y(\pi_Y(Q))$, then $\pi_Y(Q^*)=Y$.
\end{proposition}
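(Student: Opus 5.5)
Each of the four parts reduces to basic facts already available: $\Inv(U\times V)=\Inv(U)\times\Inv(V)$ (Remark~\ref{rem:product_inv}), $\pi_Y(\omega(U))=\omega(\pi_Y(U))$ (Lemma~\ref{lem-omega-base}), and the definition $Q^*=\{z\mid \omega(z)\cap Q=\emptyset\}$. For a point $(x,y)$ of a product system, $\omega((x,y))=\omega(x)\times\omega(y)$. The inclusion $\subset$ holds because both coordinates of a convergent subsequence converge. For $\supset$, take $a\in\omega(x)$ along $t_n$ and then a subsequence of $t_n$ along which $\psi(t_n,y)\to b'$; this gives $(a,b')$ with $b'\in\omega(y)$. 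Obtaining arbitrary pairs $(a,b)$ would require a common time sequence, which fails in general. So I do not use equality; I use only $\pi_X(\omega(x,y))=\omega(x)$ and $\pi_Y(\omega(x,y))=\omega(y)$, which follow from Lemma~\ref{lem-omega-base} applied to singletons.

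\textbf{(i)} Remark~\ref{rem:product_inv} gives $\iota_Y(B)=\Inv(X)\times B$. Taking $B=\pi_Y(Q)$, the identity $Q=\iota_Y(\pi_Y(Q))$ is literally the identity $Q=\Inv(X)\times\pi_Y(Q)$.

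\textbf{(ii)} Let $Q=Q_1\times Q_2$ be an attractor. By Remark~\ref{rmk:product-proj-both-coordinates}, $Q_1=\pi_X(Q)$ and $Q_2=\pi_Y(Q)$ are attractors, so $Q_i^*$ makes sense.
\begin{itemize}
\item If $\omega(y)\cap Q_2=\emptyset$, then every point of $\omega(x,y)$ has second coordinate outside $Q_2$, so $(x,y)\in Q^*$. The same argument handles $\omega(x)\cap Q_1=\emptyset$.
\item Conversely, suppose $\omega(x)\cap Q_1\neq\emptyset$ and $\omega(y)\cap Q_2\neq\emptyset$. Then both $\omega(x)$ and $\omega(y)$ meet the corresponding attracting neighborhoods. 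Since $\omega(x)$ meets $Q_1$, the forward orbit of $x$ eventually enters an attracting neighborhood $U_1$ of $Q_1$ and stays there, so $\omega(x)\subset Q_1$. Similarly $\omega(y)\subset Q_2$. Hence $\omega(x,y)\subset Q_1\times Q_2$, and since it is nonempty, $(x,y)\notin Q^*$.
\end{itemize}
This proves $Q^*=(X\times Q_2^*)\cup(Q_1^*\times Y)$. The key fact is that $\omega(z)\cap A\ne\emptyset$ implies $\omega(z)\subset A$ for an attractor $A$, which I would prove this way from the attracting neighborhood.

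\textbf{(iii)} Apply (ii) with $Q_1=\Inv(X)$. The maximal attractor has empty dual repeller, since every $\omega(x)\subset\Inv(X)$ is nonempty by compactness. Thus $Q^*=X\times\pi_Y(Q)^*$, and projecting gives $\pi_Y(Q^*)=\pi_Y(Q)^*$.

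\textbf{(iv)} This is the main obstacle. Set $B=\pi_Y(Q)$. Then $Q\subsetneq \Inv(X)\times B$: the inclusion holds because $Q$ is invariant, so $Q\subset\Inv(X\times B)$. Fix $y\in Y$; I need some $x$ with $\omega(x,y)\cap Q=\emptyset$.

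If $\omega(y)\cap B=\emptyset$, then any $x$ works.

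Otherwise $\omega(y)\subset B$, and the plan is to choose $x$ in the repeller side using $Q^*\neq$ the cylinder repeller. I would try the following steps.
\begin{enumerate}
\item Since $Q\neq \Inv(X)\times B$, the set $P=\pi_X(Q)$ is an attractor with $P\subsetneq\Inv(X)$. Indeed, if $P=\Inv(X)$, then containment arguments of this kind would force $Q$ to be a cylinder; this is the delicate point.
\item Because $P\subsetneq\Inv(X)$, the repeller $P^*$ is nonempty. Pick $x\in P^*$, so $\omega(x)\cap P=\emptyset$.
\item Then $\pi_X(\omega(x,y))=\omega(x)$ misses $P=\pi_X(Q)$, so $\omega(x,y)\cap Q=\emptyset$ and $(x,y)\in Q^*$.
\end{enumerate}

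The gap is step 1, showing $\pi_X(Q)\neq\Inv(X)$. It may actually fail, for example for a diagonal-type attractor. In that case I would instead use the recurrent-component duality: choose a recurrent component of $\phi\times\psi$ outside $Q$ that projects onto one containing $\omega(y)$, and realize it via a point $(x,y)$ whose $\omega$-limit lies in that component. I expect this step to need the full structure, possibly the flow hypothesis, and I would check it against Example~\ref{ex:coproduct_map}.
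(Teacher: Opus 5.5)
Your parts (i) and (iii) match the paper's argument. For (ii) you argue pointwise. You use $\pi_X(\omega(z))=\omega(x)$ and $\pi_Y(\omega(z))=\omega(y)$, together with the basin fact that $\omega(x)\cap A\neq\emptyset$ forces $\omega(x)\subset A$ for an attractor $A$; your attracting-neighborhood argument for that fact is correct. The paper instead writes $Q_1\times Q_2=(\Inv(X)\times Q_2)\wedge(Q_1\times\Inv(Y))$. It then uses that ${}^*$ is a lattice anti-isomorphism to turn this meet into the union of the two dual repellers, and computes each cylinder dual with Lemma~\ref{lem-omega-base}. Both routes are sound. Yours is more elementary and self-contained; the paper's is shorter once attractor--repeller duality is available. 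You are also right to avoid $\omega(x,y)=\omega(x)\times\omega(y)$, which fails in general.

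For (iv), the gap you identify at step~1 is genuine, and no alternative argument can close it, because (iv) is false as stated. The paper's own proof has the same hole. From $Q\subset\pi_X(Q)\times\pi_Y(Q)$ it gets $(X\times\pi_Y(Q)^*)\cup(\pi_X(Q)^*\times Y)\subset Q^*$ and concludes $\pi_Y(Q^*)=Y$. That conclusion silently requires $\pi_X(Q)^*\neq\emptyset$, i.e.\ $\pi_X(Q)\neq\Inv(X)$.

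A counterexample is already in Example~\ref{ex:coproduct_flow}. Take $Q=a\vee b=([0,1]\times\{1\})\cup(\{1\}\times[0,1])$.
\begin{itemize}
\item Since $\pi_Y(Q)=[0,1]$, we have $\iota_Y(\pi_Y(Q))=[0,1]^2\neq Q$, so the hypothesis of (iv) holds.
\item $\omega(x,y)$ is the single equilibrium $(\bar x,\bar y)$, where $\bar x=1$ iff $x>0$ and $\bar y=1$ iff $y>0$.
\item Hence $Q^*=\{(0,0)\}$ and $\pi_Y(Q^*)=\{0\}\neq Y$.
\end{itemize}
This is a product of flows, and for every $y\in(0,1]$ every $\omega(x,y)$ lies in $Q$. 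So your fallback via recurrent components must also fail, with or without the flow hypothesis. Note too that the obstruction comes from a union of cylinders, not from a diagonal-type attractor.

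What your steps~2 and~3 do prove, correctly, is a repaired statement: if $\pi_X(Q)\neq\Inv(X)$, then $\pi_Y(Q^*)=Y$. For $Q\neq\emptyset$ this hypothesis implies $Q\neq\iota_Y(\pi_Y(Q))$, but not conversely, and the counterexample lives exactly in that difference.

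The inclusion $\pi_Y(\xi)\subset h(\xi)$ in Theorem~\ref{thm:RC_projection} uses only the inclusions in its proof, so it is unaffected. Part (iv) is invoked there only for the additional equality claim for product systems.
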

\begin{proof} (i) follows immediately from Remark~\ref{rem:product_inv} since $\iota_Y(\pi_Y(Q))=\Inv(X)\times\pi_Y(Q).$ 

To prove (ii), note that $Q_1\times Q_2=(\Inv(X)\times Q_2)\cap(Q_1\times\Inv(Y))=(\Inv(X)\times Q_2)\wedge(Q_1\times\Inv(Y)).$ Since duality is a lattice anti-isomorphism between attractor and repeller lattices, see Diagram~\eqref{diag:AR-duality},  we have 
$$
Q^*=(\Inv(X)\times Q_2)^*\cup (Q_1\times\Inv(Y))^*.
$$
Let $\Gamma_{(x,y)}=\gamma_{x}\times\gamma_{y}$ denote an orbit through $\Gamma_x(0)=(x,y)$.
\[
\begin{aligned}
(\Inv(X)\times Q_2)^*&=\{z~|~\omega(\Gamma_z)\cap (\Inv(X)\times Q_2)=\emptyset\}\\
&=\{z~|~\pi_Y(\omega(\Gamma_z))\cap Q_2=\emptyset\} \\ 
&=X\times\{y~|~\omega(\gamma_{y})\cap Q_2=\emptyset\}\\
&=X\times Q_2^*
\end{aligned}
\]
by Lemma~\ref{lem-omega-base}. Similarly, $(Q_1\times\Inv(Y))^*=Q_1^*\times Y$, which proves (ii).

Now (iii) follows immediately from (ii) since $\iota_Y(\pi_Y(Q))=\Inv(X)\times\pi_Y(Q)$.

Finally, $Q\subset \pi_X(Q)\times\pi_Y(Q)$ so that 
$(X\times \pi_Y(Q)^*) \cup (\pi_X(Q)^*\times Y) \subset Q^*$. Hence  $\pi_Y(Q^*)=Y$, which proves (iv).
\end{proof}

\subsection{Composition and recurrence}
\label{sec:projection-RC}

Let $\phi\bullet\psi$ be the composed system. Using Priestley duality and its representation in dynamics as described in Section~\ref{subsec:order} and \cite{KV25}, Proposition~\ref{prop:embedding} gives the following commutative diagram of order-preserving maps, where the horizontal maps are surjective and the vertical maps are isomorphisms. 

\begin{equation}\label{diag:RC_spec}
\begin{tikzcd}
\Sigma\,\mathsf{Att}(\varphi \bullet \psi)
  \arrow[r, "\iota_Y^{-1}"] \arrow[d] &
\Sigma\,\mathsf{Att}(\psi)
  \arrow[d] \\
\mathsf{RC}(\varphi \bullet \psi)
  \arrow[r, "h"] &
\mathsf{RC}(\psi)
\end{tikzcd}
\end{equation}

Let $\xi\in\sRC(\varphi\bullet \psi)$. From Equation~6.1 and Lemma~6.2 in \cite{KV25},  define $I\in\Sigma\sAtt(\phi\bullet \psi)$ by $I=\{Q~|~\xi\not\subset Q\}.$ Let $J=\iota_Y^{-1}(I)=\{A~|~\iota_Y(A)\in I\}$. Then 
\[
\xi=\left(\bigcap_{Q\in I^c}Q\right)
\bigcap
\left(\bigcap_{Q\in I}Q^*\right)
\quad\hbox{and}\quad
h(\xi)=\left(\bigcap_{A\in J^c}A\right)
\bigcap
\left(\bigcap_{A\in J}A^*\right).
\]

\begin{theorem}\label{thm:RC_projection}
The map $h\colon\sRC(\phi\bullet\psi)\to\sRC(\psi)$, defined by Diagram~\ref{diag:RC_spec}, satisfies $\pi_Y(\xi)\subset h(\xi).$ 
\end{theorem}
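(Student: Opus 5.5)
Since $h(\xi)$ is an intersection over $A\in J^c$ of $A$ and over $A\in J$ of $A^*$, I will show that $\pi_Y(\xi)$ lies in each of these sets. Unwinding the definitions, $J=\{A\in\sAtt(\psi)\mid \iota_Y(A)\in I\}=\{A\mid \xi\not\subset\iota_Y(A)\}$ and $J^c=\{A\mid \xi\subset \iota_Y(A)\}$. This splits the argument into two cases according to whether $A\in J^c$ or $A\in J$.

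\emph{Case $A\in J^c$.} Here $\xi\subset\iota_Y(A)=\Inv(X\times A)\subset X\times A$, so $\pi_Y(\xi)\subset A$ directly. Alternatively, the identity $\pi_Y\circ\iota_Y=\id$ from Proposition~\ref{prop:embedding} gives the same inclusion.

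\emph{Case $A\in J$.} Now $\xi\not\subset\iota_Y(A)$. Since $\xi$ is a recurrent component, it lies in $Q\cup Q^*$ for every attractor $Q$, and by the defining equivalence it lies entirely in one of the two. Taking $Q=\iota_Y(A)$ therefore gives $\xi\subset\iota_Y(A)^*$. It then suffices to identify $\iota_Y(A)^*$ with $X\times A^*$, or at least to show $\iota_Y(A)^*\subset X\times A^*$. The pointwise computation in the proof of Proposition~\ref{prop:product_rep}(ii) carries over to the composed system: $z\in\iota_Y(A)^*$ if and only if $\omega(z)\cap\Inv(X\times A)=\emptyset$. Because $\omega(z)$ is invariant, $\omega(z)\cap(X\times A)\subset\Inv(X\times A)$, so this is equivalent to $\omega(z)\cap(X\times A)=\emptyset$, which in turn is equivalent to $\pi_Y(\omega(z))\cap A=\emptyset$. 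By Lemma~\ref{lem-omega-base}, $\pi_Y(\omega(z))=\omega(\pi_Y(z))$, so the condition becomes $\pi_Y(z)\in A^*$. Hence $\iota_Y(A)^*=X\times A^*$, and $\pi_Y(\xi)\subset A^*$. This is also the repeller form of $\iota_Y$ recorded in Remark~\ref{rmk:dual-repeller-diagram}.

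The main obstacle is the invariance step in the second case: passing from $\omega(z)\cap(X\times A)$ to $\omega(z)\cap\Inv(X\times A)$. This requires that the invariant set $\omega(z)$ meets $X\times A$ only inside $\Inv(X\times A)$. That is not immediate, because an invariant set can meet $X\times A$ without being contained in it. I would argue instead through the attractor side. If $\omega(z)$ meets $X\times A$, then $\omega(\pi_Y(z))$ meets $A$, so $\pi_Y(z)\notin A^*$. Since $(A,A^*)$ is an attractor--repeller pair, this forces $\omega(\pi_Y(z))\subset A$, and hence $\omega(z)\subset X\times A$. Invariance of $\omega(z)$ then gives $\omega(z)\subset\Inv(X\times A)$, so $\omega(z)$ meets $\iota_Y(A)$ and $z\notin\iota_Y(A)^*$. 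The dichotomy used here, that $\omega(y)\cap A\ne\emptyset$ implies $\omega(y)\subset A$, is standard attractor--repeller duality from \cite{KMV-1a}. Combining the two cases gives $\pi_Y(\xi)\subset h(\xi)$.
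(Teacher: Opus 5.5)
Your proof is correct, and it takes a more direct route than the paper's.

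\textbf{The paper's route.} The paper projects the two large intersections $\bigcap_{Q\in I^c}Q$ and $\bigcap_{Q\in I}Q^*$ separately. It uses downward directedness of $I^c$ and $\{Q^*\mid Q\in I\}$, together with Cantor's intersection theorem, to commute $\pi_Y$ with these intersections. Upward closure of the filter $I^c$ (via $Q\subset\iota_Y(\pi_Y(Q))$) then gives the equality $\pi_Y\bigl(\bigcap_{Q\in I^c}Q\bigr)=\bigcap_{A\in J^c}A$. For the repeller part, it restricts to cylinder attractors $Q=\iota_Y(\pi_Y(Q))\in I$ and uses $\pi_Y(\iota_Y(A)^*)=A^*$. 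That identity is taken from the repeller version of the embedding diagram (Remark~\ref{rmk:dual-repeller-diagram}) and proved explicitly only for products (Proposition~\ref{prop:product_rep}(iii)).

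\textbf{Your route.} You work one base attractor at a time. Since containment in an intersection is equivalent to containment in each member, no compactness, directedness or filter argument is needed. You use only the fact that a recurrent component lies entirely in $\iota_Y(A)$ or entirely in $\iota_Y(A)^*$.

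\textbf{What each approach buys.} Your argument isolates the single genuinely dynamical input, namely $\iota_Y(A)^*=X\times A^*$. It also proves this identity for products and skew-products alike, from Lemma~\ref{lem-omega-base}, the attractor dichotomy, and invariance of $\omega(z)$. The paper's route yields sharper set-level information: the equality for the attractor part, and for products (via Proposition~\ref{prop:product_rep}(iv)) also for the repeller part. That information is geared toward the question of when $\pi_Y(\xi)=h(\xi)$.

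\textbf{One presentational point.} Your first justification, that $\omega(z)\cap(X\times A)\subset\Inv(X\times A)$ holds ``because $\omega(z)$ is invariant,'' is not valid by invariance alone, as you yourself note. Drop it and keep only the repaired argument: $\omega(\pi_Y(z))\cap A\neq\emptyset$ forces $\omega(\pi_Y(z))\subset A$, hence the nonempty invariant set $\omega(z)$ lies in $X\times A$, and so lies in $\Inv(X\times A)=\iota_Y(A)$.
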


\begin{proof}
Since $I$ is a prime ideal, $I^c$ is a filter and hence meet-closed. Thus, as a family of sets, $I^c$ is downward directed, ie.\ for every $Q,Q'\in I^c$ there exists $Q''\in I^c$ such that $Q''\subset Q\cap Q'$, by taking $Q''=Q\wedge Q'$.

Moreover, the family of sets $I^*:=\{Q^*~|~Q\in I\}$ is also downward directed. Indeed, $I$ is join-closed and $Q\mapsto Q^*$ is an anti-isomorphism, hence $I^*$ is meet-closed, and we apply the same argument as before.

Therefore, by Cantor's Intersection Theorem, c.f.\ \cite{KV25}, we have 
\[
\pi_Y\left(\bigcap_{Q\in I^c}Q\right)=
\bigcap_{Q\in I^c}\pi_Y(Q) \quad\hbox{and}\quad
\pi_Y\left(\bigcap_{Q\in I}Q^*\right)=
\bigcap_{Q\in I}\pi_Y(Q^*)
\]
Also,
\[
\pi_Y(\xi)=\pi_Y\left(\left(\bigcap_{Q\in I^c}Q\right)
\bigcap
\left(\bigcap_{Q\in I}Q^*\right)\right)\subset
\left(\bigcap_{Q\in I^c}\pi_Y(Q)\right)
\bigcap
\left(\bigcap_{Q\in I}\pi_Y(Q^*)\right)
\]

Let $Q\in I^c.$ Since $Q\subset \iota_Y(\pi_Y(Q))$, and as a filter, $I^c$ is upward closed, we have $\iota_Y(\pi_Y(Q))\in I^c.$
Since $\pi_Y$ is a left inverse for $\iota_Y,$ 
\[
\pi_Y(Q)=\pi_Y(\iota_Y(\pi_Y(Q))).
\]
This implies that 
\[
\bigcap_{A\in J^c}A=\bigcap_{\iota_Y(A)\in I^c}\pi_Y(\iota_Y(A))=
\bigcap_{Q=\iota_Y(\pi_Y(Q))\in I^c}\pi_Y(Q)=
\bigcap_{Q\in I^c}\pi_Y(Q)=
\pi_Y\left(\bigcap_{Q\in I^c}Q\right).
\]

Let $Q\in I$. 
Then
\begin{equation}\label{eqn:proj_part2}
\begin{aligned}
\bigcap_{A\in J}A^*&=\bigcap_{\iota_Y(A)\in I}(\pi_Y(\iota_Y(A)))^*
=\bigcap_{Q=\iota_Y(\pi_Y(Q))\in I}\pi_Y(Q^*)
\supset\bigcap_{Q\in I}\pi_Y(Q^*)=\pi_Y\left(\bigcap_{Q\in I}Q^*\right).
\end{aligned}
\end{equation}
    
For a product system,
Proposition~\ref{prop:product_rep}(iv) implies that the last inequality in Equation~\ref{eqn:proj_part2} is an equality. 
This proves $\pi_Y(\xi)\subset h(\xi)$.
\end{proof}

\begin{corollary}\label{cor:RC_projection}
If $h(\xi)$ is a minimal set, ie.\ it contains no smaller closed, invariant subset, then $\pi_Y(\xi)=h(\xi)$. Specifically, if the base system $\psi$ is gradient-like so that all recurrent components are equilibria, then $\pi_Y(\xi)=h(\xi)$ for all $\xi\in\sRC(\phi\bullet\psi).$
\end{corollary}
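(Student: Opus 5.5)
The plan is to combine Theorem~\ref{thm:RC_projection}, which gives $\pi_Y(\xi)\subset h(\xi)$, with the fact that $\pi_Y(\xi)$ is a nonempty, compact, invariant subset of $Y$. Once that is established, minimality of $h(\xi)$ forces $\pi_Y(\xi)=h(\xi)$. The gradient-like statement then follows at once, because an equilibrium is a single point and so is trivially minimal.

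The argument proceeds in three steps.

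\emph{Step 1 (compactness).} Every recurrent component $\xi$ is an intersection of attractors and dual repellers, all of which are closed in the compact space $X\times Y$. Hence $\xi$ is compact, and so is its continuous image $\pi_Y(\xi)$.

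\emph{Step 2 (nonemptiness).} Recurrent components are nonempty by construction: they are equivalence classes of $\mathcal{R}(\phi\bullet\psi)$, or equivalently images $\Psi(I)$ of prime ideals, which are nonempty by \cite{KV25}. Hence $\pi_Y(\xi)\neq\emptyset$.

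\emph{Step 3 (invariance).} The set $\xi$ is invariant, being an intersection of invariant attractors and repellers; I will take this from \cite{KV25}, where recurrent components are shown to be invariant. By Remark~\ref{rem:product_inv} and its skew-product analogue, every complete orbit $\Gamma$ of $\phi\bullet\psi$ through a point of $\xi$ projects to a complete orbit $\pi_Y\circ\Gamma$ of $\psi$. This holds because the second coordinate of $(\phi\bullet\psi)(t,(x,y))$ is $\psi(t,y)$ in both the product and skew-product cases. Consequently $\pi_Y(\xi)$ is a union of complete $\psi$-orbits, and therefore invariant. Equivalently, $\psi(t,\pi_Y(\xi))=\pi_Y((\phi\bullet\psi)(t,\xi))=\pi_Y(\xi)$.

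The main point needing care is Step 3. The systems here need not be injective or surjective, so I have to make sure that the invariance of $\xi$ (meaning $\Phi(t,\xi)=\xi$) passes through the projection. The displayed identity does this directly, using only the semiconjugacy $\pi_Y\circ(\phi\bullet\psi)_t=\psi_t\circ\pi_Y$. That semiconjugacy holds for both compositions by definition, so the identity avoids any separate argument about backward orbits.

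Having shown that $\pi_Y(\xi)$ is a nonempty, closed, invariant subset of the minimal set $h(\xi)$, minimality gives $\pi_Y(\xi)=h(\xi)$. In the gradient-like case, each $h(\xi)\in\sRC(\psi)$ is an equilibrium $\{y_0\}$. Since $\pi_Y(\xi)$ is nonempty and contained in $\{y_0\}$, equality holds for every $\xi\in\sRC(\phi\bullet\psi)$.
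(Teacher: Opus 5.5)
Your proposal is correct and is essentially the paper's argument. You show that $\pi_Y(\xi)$ is closed and invariant, with your semiconjugacy identity $\psi(t,\pi_Y(\xi))=\pi_Y((\phi\bullet\psi)(t,\xi))$ making the invariance explicit; the inclusion from Theorem~\ref{thm:RC_projection} plus minimality then forces equality, and your check that $\pi_Y(\xi)\neq\emptyset$ usefully fills a point the paper leaves implicit.

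One caution concerns your aside that $\xi$ is invariant ``being an intersection of invariant attractors and repellers'': this is not a valid justification here. For non-surjective systems, repellers are in general only forward invariant (e.g.\ the repeller $[0,1]$ dual to $\emptyset$ for $x\mapsto x/2$ on $[0,1]$), and for non-injective systems intersections of invariant sets need not be invariant. So the invariance of $\xi$ should rest on the citation to \cite{KV25}, as it does in the paper.
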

\begin{proof}
Since the recurrent component $\xi$ is closed and invariant, $\pi_Y(\xi)$ is closed and invariant. Hence $\pi_Y(\xi)\subset h(\xi)$ implies $\pi_Y(\xi)= h(\xi)$.    
\end{proof}

\begin{example}\label{ex:rc_projection}
This example shows that equality need not hold in a skew-product system. Consider the skew-product flow on $X\times Y=[0,2]\times[-1,1]$ generated from the ODE's
\[
\dot x=-x(2-x)(y^2+(1-x)^2)\quad\hbox{and}\quad \dot y=0.
\]
Then $\sRC(\psi)$ has one element, $[-1,1]$, and hence $h(\xi)=[-1,1]$ for all $\xi\in \sRC(\phi\ltimes\psi)$. Now,  $\sRC(\phi\ltimes\psi)$ has three elements: $\xi_0=\{0\}\times [-1,1]$, $\xi_1=\{(1,0)\}$, and $\xi_2=\{2\}\times [-1,1]$. Hence $\pi_Y(\xi_1)=\{0\}\subsetneq[-1,1]=h(\xi_1)$.
\end{example}

In the next section, we show that equality in Theorem~\ref{thm:RC_projection} holds in general for products of flows.

\section{Attractor Lattices in Product Systems}\label{sec-PAtt}

In this section we specialize the general framework of Section~\ref{sec-composition} to product systems and compare the attractor lattice of a product system with the attractor lattices of its factor systems. Since the two coordinates play symmetric roles in this setting, we use the notation $X_1$ and $X_2$ for the phase spaces in place of $X$ and $Y$.
For $\phi_i\colon \T^+\times X_i\to X_i$ for $i=1,2$, the product system \( \Phi:=\phi_1\times\phi_2\colon \T^+\times (X_1\times X_2)\to X_1\times X_2 \) by \( \Phi(t,(x_1,x_2))=(\phi_1(t,x_1),\phi_2(t,x_2)). \)

\subsection{Coproducts of attractor lattices}
\label{subsec-APs}

Attractor lattices are bounded distributive lattices, and  the category $\sBDLat$ has coproducts  that  can be characterized internally as follows.  

\begin{proposition}\label{prop-Cornish}
\emph{\bf (Cornish \cite[p.~29]{Cornish1975} and \cite[Theorem~2]{GratzerLakser1969})}
A bounded distributive lattice $L$ containing $L_1$ and $L_2$ as sublattices is isomorphic to their {\em coproduct}, $L_1\amalg L_2$, if and only if (1) $L$ is generated by $L_1$ and $L_2$; and (2) for any $l_1,m_1\in L_1$ and $l_2,m_2\in L_2$ if $l_1\wedge l_2\le m_1\vee m_2$ then either $l_1\le m_1$ or $l_2\le m_2$.
\end{proposition}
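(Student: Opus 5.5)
The plan is to use the universal property of the coproduct in $\sBDLat$ together with Priestley duality. On the dual side, coproducts of bounded distributive lattices correspond to products of Priestley spaces, so $\Sigma(L_1\amalg L_2)\cong \Sigma L_1\times\Sigma L_2$. Let $L$ be generated by sublattices $L_1,L_2$. The inclusions induce a unique homomorphism $\Theta\colon L_1\amalg L_2\to L$ extending them. Because $L$ is generated by $L_1$ and $L_2$, $\Theta$ is surjective. So the statement reduces to this: assuming (1), $\Theta$ is injective if and only if (2) holds.

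For necessity, I would work inside the free construction. The coproduct can be realized as the lattice of clopen up-sets of $\Sigma L_1\times\Sigma L_2$, with $l_i$ sent to the cylinder over its clopen up-set $\hat l_i\subset\Sigma L_i$. Suppose $l_1\wedge l_2\le m_1\vee m_2$ but $l_1\not\le m_1$ and $l_2\not\le m_2$. By the prime ideal theorem there are prime ideals $P_i\in\Sigma L_i$ with $m_i\in P_i$ and $l_i\notin P_i$. The pair $(P_1,P_2)$ then lies in $\hat l_1\times\hat l_2$ but not in $(\hat m_1\times\Sigma L_2)\cup(\Sigma L_1\times\hat m_2)$. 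So the inequality fails in $L_1\amalg L_2$, and it must also fail in any $L$ isomorphic to it.

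For sufficiency, assume (2). Every element of $L$ is a finite join of meets $a\wedge b$ with $a\in L_1$, $b\in L_2$. This uses distributivity and the fact that the $L_i$ are sublattices containing $0,1$. To show $\Theta$ is injective, it suffices to show that an inequality $\bigvee_j (a_j\wedge b_j)\le\bigvee_k(c_k\wedge d_k)$ holding in $L$ also holds in the coproduct. Reducing the left side termwise, it suffices to treat $a\wedge b\le\bigvee_k(c_k\wedge d_k)$. I would distribute the right side into a meet over choice functions $S\subseteq K$ of terms $\bigvee_{k\in S}c_k\vee\bigvee_{k\notin S}d_k$. Each such term has the form $m_1\vee m_2$ with $m_i\in L_i$. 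Condition (2) then gives, for each $S$, either $a\le\bigvee_{k\in S}c_k$ or $b\le\bigvee_{k\notin S}d_k$, and these are inequalities inside $L_1$ or $L_2$. Running the distribution backwards, these inequalities give the same inequality in the coproduct, since the coproduct also satisfies the distributive law and contains the $L_i$ faithfully. Hence $\Theta$ reflects order and is injective.

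The main obstacle I expect is showing that the $L_i$ embed into $L_1\amalg L_2$ and that the normal form is well behaved. Concretely, the inequality $a\wedge b\le m_1\vee m_2$ must hold in the coproduct exactly when $a\le m_1$ or $b\le m_2$, with the degenerate cases $a=0$ or $b=0$ handled separately. I would settle this using the Priestley product description together with the prime-ideal separation argument from the necessity step. Since this characterization is classical (Cornish, Grätzer–Lakser), an acceptable alternative is to cite it as the paper does.
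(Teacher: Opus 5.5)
The paper gives no proof of this proposition. It quotes the result from Cornish and Gr\"atzer--Lakser and uses it as a black box in Proposition~\ref{prop:attcoprod}. Your argument is a correct self-contained proof. Sufficiency goes through the conjunctive normal form $\bigvee_{k}(c_k\wedge d_k)=\bigwedge_{S\subseteq K}\bigl(\bigvee_{k\in S}c_k\vee\bigvee_{k\notin S}d_k\bigr)$, which reduces every inequality between normal forms to instances of (2). Necessity goes through prime-ideal separation.

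The ``main obstacle'' you anticipate does not arise for sufficiency. To carry $a\le\bigvee_{k\in S}c_k$ or $b\le\bigvee_{k\notin S}d_k$ into $L_1\amalg L_2$, you only need three things: the coproduct injections are order-preserving, the coproduct is distributive, and $L_1\amalg L_2$ is generated by the images of its injections. That last fact also gives the direction ``$\Theta$ an isomorphism $\Rightarrow$ (1)'', which you did not state. Faithfulness of the injections and the coproduct-side equivalence matter only for necessity, which you have already handled. Even there you can avoid identifying $L_1\amalg L_2$ with clopen up-sets of $\Sigma L_1\times\Sigma L_2$. Your prime ideals $P_i$ define homomorphisms $f_i\colon L_i\to\mathbf 2$ with $f_i(l_i)=1$ and $f_i(m_i)=0$. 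The universal property then gives $f\colon L_1\amalg L_2\to\mathbf 2$ separating $l_1\wedge l_2$ from $m_1\vee m_2$.

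One phrasing issue matters. The isomorphism must be the canonical $\Theta$, so the necessity step should conclude that the inequality fails in $L$ because $\Theta$ reflects order. Being ``isomorphic'' to the coproduct is not enough. For an abstract isomorphism the statement is false. Take the six-element lattice $0<e<f,g<f\vee g<1$ (the shape of the coproduct in Example~\ref{ex:coproduct_flow}) with $L_1=L_2=\{0,e,1\}$. It is isomorphic to $\mathbf 3\amalg\mathbf 3$, yet (1) fails, and (2) fails for $l_1=e$, $m_1=0$, $l_2=1$, $m_2=e$.

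Compared with the citation, your route makes the result self-contained using only distributivity, the universal property and the prime ideal theorem. That fits the Priestley-duality toolkit the paper already relies on for Theorem~\ref{cor_flows_att_iso}. The citation buys only brevity.
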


By the embedding results of Section~\ref{sec-composition}, the factor
attractor lattices embed canonically into \(\sAtt(\Phi)\). Indeed, by Proposition~\ref{prop:embedding}, we have  $\sAtt(\phi_i)\approx L_k:=\iota_i(\sAtt(\phi_i))$ for $i=1,2$. 

Define
\[
L_\wedge:=\{\,A_1\times A_2\mid A_i\in\sAtt(\phi_i)\,\}.
\]

\begin{proposition}\label{prop:Lwedge-meet}
$L_\wedge$ is the smallest meet semilattice of $\sAtt(\Phi)$ containing $L_1\cup L_2$.
\end{proposition}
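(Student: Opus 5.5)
We need to show three things: that every element of $L_\wedge$ lies in $\sAtt(\Phi)$, that $L_\wedge$ is closed under the meet of $\sAtt(\Phi)$, and that $L_\wedge$ lies inside any meet subsemilattice containing $L_1\cup L_2$. Throughout, the key tool is Remark~\ref{rem:product_inv}, namely $\Inv(U\times V)=\Inv(U)\times\Inv(V)$ for the product system, together with the identifications $\iota_1(A_1)=A_1\times\Inv(X_2)$ and $\iota_2(A_2)=\Inv(X_1)\times A_2$.

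\emph{Step 1: $L_\wedge\subset\sAtt(\Phi)$ and $L_1\cup L_2\subset L_\wedge$.} Let $A_i=\Inv(U_i)$ with $U_i\in\sANbhd(\phi_i)$. First check that $U_1\times U_2$ is an attracting neighborhood for $\Phi$. We have $\cl(U_1\times U_2)=\cl U_1\times\cl U_2$, and for $t\ge\max(\tau_1,\tau_2)$,
\[
\Phi(t,\cl U_1\times\cl U_2)\subset \Int U_1\times\Int U_2=\Int(U_1\times U_2).
\]
By Remark~\ref{rem:product_inv}, $\Inv(U_1\times U_2)=A_1\times A_2$, so $A_1\times A_2\in\sAtt(\Phi)$. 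Taking $A_2=\Inv(X_2)$ (the maximal attractor of $\phi_2$) gives $\iota_1(A_1)\in L_\wedge$, and symmetrically $\iota_2(A_2)\in L_\wedge$. Hence $L_1\cup L_2\subset L_\wedge$.

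\emph{Step 2: closure under meets.} The meet in $\sAtt(\Phi)$ is induced from $\sANbhd(\Phi)$ via $\omega=\Inv$. Choosing neighborhoods as in Step 1, we compute
\[
(A_1\times A_2)\wedge(B_1\times B_2)=\Inv\big((U_1\times U_2)\cap(V_1\times V_2)\big)=\Inv\big((U_1\cap V_1)\times(U_2\cap V_2)\big)=\Inv(U_1\cap V_1)\times\Inv(U_2\cap V_2).
\]
The last expression equals $(A_1\wedge B_1)\times(A_2\wedge B_2)$, which lies in $L_\wedge$. This is the main point requiring care: we must use that the meet on attractors is $\omega$ of the intersection of attracting neighborhoods, and that this does not depend on the choice of neighborhoods (the surjective lattice homomorphism $\omega$ of Section~\ref{subsec:AttLat}). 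The top element $\Inv(X_1)\times\Inv(X_2)=\Inv(X_1\times X_2)$ also belongs to $L_\wedge$.

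\emph{Step 3: minimality.} Let $M$ be any meet subsemilattice of $\sAtt(\Phi)$ containing $L_1\cup L_2$. Then
\[
A_1\times A_2=(A_1\times\Inv(X_2))\wedge(\Inv(X_1)\times A_2)=\iota_1(A_1)\wedge\iota_2(A_2),
\]
either by Step 2 or directly by the first computation in the proof of Proposition~\ref{prop:product_rep}(ii). This element lies in $M$, so $L_\wedge\subset M$. Together with Steps 1 and 2, this shows $L_\wedge$ is the smallest such meet semilattice.
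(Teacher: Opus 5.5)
Your proof is correct and follows the paper's argument. Like the paper, you close $L_\wedge$ under meets using $\Inv(U\times V)=\Inv(U)\times\Inv(V)$, and you get minimality from $A_1\times A_2=\iota_1(A_1)\wedge\iota_2(A_2)$. The differences are minor: you also verify that $U_1\times U_2$ is an attracting neighborhood, so that $L_\wedge\subset\sAtt(\Phi)$, which the paper takes for granted; and you compute the meet through attracting neighborhoods, whereas the paper takes $\Inv$ of the intersection of the attractors themselves.
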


\begin{proof}
For any $A_i,A_i'\in\sAtt(\phi_i)$, we have
\[
\begin{aligned}
(A_1\times A_2)\wedge(A_1'\times A_2') 
&= \Inv\!\big((A_1\times A_2)\cap(A_1'\times A_2')\big) \\
&= \Inv\!\big((A_1\cap A_1')\times(A_2\cap A_2')\big) \\
&= \Inv(A_1\cap A_1')\times \Inv(A_2\cap A_2') \\
&= (A_1\wedge A_1')\times (A_2\wedge A_2') \;\in\; L_\wedge.
\end{aligned}
\]   
Thus $L_\wedge$ is closed under meets. Now let $M$ be any meet semilattice of $\sAtt(\Phi)$ containing $L_1\cup L_2$.  For $A_1\in\sAtt(\phi_1)$ and $A_2\in\sAtt(\phi_2)$ we have
\[
(A_1\times\Inv(X_2))\wedge(\Inv(X_1)\times A_2) = A_1\times A_2\in M,
\]
so $L_\wedge\subset M$, and result follows. 
\end{proof}

Let $L_\wedge^\vee$ denote the join-closure of $L_\wedge$ in $\sAtt(\Phi)$.

\begin{proposition}\label{prop:Lwedgevee-generated}
$L_\wedge^\vee$ is the sublattice of $\sAtt(\Phi)$ generated by $L_1$ and $L_2$.
\end{proposition}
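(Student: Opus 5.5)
We argue by the standard two-sided inclusion. Let $S$ denote the sublattice of $\sAtt(\Phi)$ generated by $L_1\cup L_2$. First we check $L_\wedge^\vee\subset S$. By Proposition~\ref{prop:Lwedge-meet}, every element $A_1\times A_2$ of $L_\wedge$ is the meet $(A_1\times\Inv(X_2))\wedge(\Inv(X_1)\times A_2)$ of an element of $L_1$ and an element of $L_2$, so $L_\wedge\subset S$. Since $S$ is closed under finite joins, every finite join of elements of $L_\wedge$ also lies in $S$, giving $L_\wedge^\vee\subset S$.

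For the reverse inclusion it suffices to show that $L_\wedge^\vee$ is itself a sublattice containing $L_1\cup L_2$; minimality of $S$ then gives $S\subset L_\wedge^\vee$. Containment is immediate: $L_1=\{A_1\times\Inv(X_2)\}$ and $L_2=\{\Inv(X_1)\times A_2\}$ are subsets of $L_\wedge$, using $\iota_i(A)=A\times\Inv(X_2)$ (resp.\ $\Inv(X_1)\times A$) from Remark~\ref{rem:product_inv}. The bounds are $\emptyset=\emptyset\times\emptyset$ and $\Inv(X_1\times X_2)=\Inv(X_1)\times\Inv(X_2)$, both in $L_\wedge$. Closure under joins holds by definition of the join-closure.

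The only substantive step is closure of $L_\wedge^\vee$ under meets, and here we use distributivity of $\sAtt(\Phi)$ (Section~\ref{subsec:AttLat}). Given $P=\bigvee_{i=1}^m P_i$ and $Q=\bigvee_{j=1}^n Q_j$ with $P_i,Q_j\in L_\wedge$, distributivity gives
\[
P\wedge Q=\bigvee_{i,j}(P_i\wedge Q_j).
\]
Each $P_i\wedge Q_j$ lies in $L_\wedge$ by Proposition~\ref{prop:Lwedge-meet}. Hence $P\wedge Q$ is a finite join of elements of $L_\wedge$, so it belongs to $L_\wedge^\vee$.

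We expect no real obstacle. The only points needing care are that the join-closure means closure under finite joins (so the distributive expansion is finite), and that meets in $\sAtt(\Phi)$ are $\omega(\cdot\cap\cdot)$ rather than plain intersections. The latter is already handled inside Proposition~\ref{prop:Lwedge-meet}, and distributivity of the attractor lattice is taken as known.
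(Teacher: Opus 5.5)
Your proof is correct and takes essentially the same route as the paper. The paper invokes the distributive normal form (finite joins of finite meets of generators, with finite meets landing in $L_\wedge$ by Proposition~\ref{prop:Lwedge-meet}). You verify the same thing explicitly: you show by distributivity that the join-closure of the meet-closed set $L_\wedge$ is meet-closed, and you check both inclusions, including the bounds.
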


\begin{proof}
Since $\sAtt(\Phi)$ is distributive, any element of the sublattice generated by $L_1\cup L_2$ can be written as a finite join of finite meets of elements of $L_1\cup L_2$, hence as a finite join of elements of $L_\wedge$.  Thus the sublattice generated by $L_1\cup L_2$ is exactly the join–closure of $L_\wedge$. 
\end{proof}

\begin{proposition}\label{prop:attcoprod}
\[
L_\wedge^\vee \;\cong\; L_1\amalg L_2 \;\cong\; \sAtt(\phi_1)\amalg\sAtt(\phi_2).
\]
\end{proposition}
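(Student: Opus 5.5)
We will apply Cornish's criterion, Proposition~\ref{prop-Cornish}, with $L=L_\wedge^\vee$ and the embedded sublattices $L_1=\iota_1(\sAtt(\phi_1))=\{A_1\times\Inv(X_2)\}$ and $L_2=\iota_2(\sAtt(\phi_2))=\{\Inv(X_1)\times A_2\}$. By Proposition~\ref{prop:embedding} and Remark~\ref{rem:product_inv}, these are sublattices isomorphic to $\sAtt(\phi_1)$ and $\sAtt(\phi_2)$. This gives the second isomorphism, since coproducts are defined up to isomorphism. Condition (1), that $L$ is generated by $L_1$ and $L_2$, is exactly Proposition~\ref{prop:Lwedgevee-generated}. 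So everything reduces to verifying condition (2).

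For condition (2), take $l_1=A_1\times\Inv(X_2)$, $m_1=B_1\times\Inv(X_2)$, $l_2=\Inv(X_1)\times A_2$, and $m_2=\Inv(X_1)\times B_2$. By Proposition~\ref{prop:Lwedge-meet}, $l_1\wedge l_2=A_1\times A_2$, and $m_1\vee m_2=(B_1\times\Inv(X_2))\cup(\Inv(X_1)\times B_2)$. Suppose
\[
A_1\times A_2\subset (B_1\times\Inv(X_2))\cup(\Inv(X_1)\times B_2),
\]
and suppose $A_1\not\subset B_1$. Pick $a_1\in A_1\setminus B_1$. For any $a_2\in A_2$, the point $(a_1,a_2)$ lies in the union but not in $B_1\times\Inv(X_2)$, so $a_2\in B_2$. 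Hence $A_2\subset B_2$, which is the dichotomy required by Cornish's criterion. Since the order in $\sAtt(\phi_i)$ is inclusion and $\iota_i$ is an order embedding, $A_i\subset B_i$ is equivalent to $l_i\le m_i$.

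The main point needing care is the degenerate case where a factor attractor is empty. If $A_2=\emptyset$, then $l_1\wedge l_2=\emptyset$ and the hypothesis holds trivially, so we need $l_2\le m_2$. This is immediate because $l_2=\Inv(X_1)\times\emptyset=\emptyset$ in the product, which also shows the argument above covers it. The same applies if $A_1=\emptyset$, by symmetry. We should also check that the elementwise argument is legitimate: it uses that joins in $\sAtt(\Phi)$ are unions and that the meet of the two cylinders is the honest product set (Remark~\ref{rem:product_inv}), with no $\omega$-closure enlarging anything. Both facts are already established. Finally, Cornish's theorem identifies $L_\wedge^\vee$ with $L_1\amalg L_2$, completing the chain of isomorphisms.
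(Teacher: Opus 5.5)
Your proposal is correct and follows the paper's proof. Both apply Cornish's criterion, take condition (1) from Proposition~\ref{prop:Lwedgevee-generated}, and verify condition (2) by locating a point of $A_1\times A_2$ outside $(B_1\times\Inv(X_2))\cup(\Inv(X_1)\times B_2)$. The only differences are that you argue directly rather than by contradiction and add a harmless check of the empty-factor case.
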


\begin{proof}
We use the characterization of the coproducts in Proposition~\ref{prop-Cornish}.
By Proposition~\ref{prop:Lwedgevee-generated}, $L_\wedge^\vee$ is the sublattice of $\sAtt(\Phi)$ generated by $L_1\cup L_2$, so condition (i) holds by construction. 
Condition (ii) holds because $\sAtt(\Phi)$ inherits the product order. 
Specifically, let $A_1,A_1'\in\sAtt(\phi_1)$ and $A_2,A_2'\in\sAtt(\phi_2)$, and  $l_1=A_1\times \Inv(X_2), m_1=A_1'\times \Inv(X_2), l_2=\Inv(X_1)\times A_2, m_2=\Inv(X_1)\times A_2'.$ In the product order, $ l_1\wedge l_2 = A_1\times A_2, m_1\vee m_2 = (A_1'\times \Inv(X_2))\,\cup\,(\Inv(X_1)\times A_2').$
If
\[
A_1\times A_2 \ \subset\ (A_1'\times \Inv(X_2))\,\cup\,(\Inv(X_1)\times A_2'),
\]
then either $A_1\subset A_1'$ or $A_2\subset A_2'$. Indeed,  if neither $A_1\subset A_1'$ nor $A_2\subset A_2'$, then choose $x\in A_1\setminus A_1',\; y\in A_2\setminus A_2'$  so that  $(x,y)\in A_1\times A_2,$ but $(x,y)\notin A_1'\times\Inv(X_2) \text{ and } (x,y)\notin \Inv(X_1)\times A_2',$ contradicting our inclusion. 
Therefore, $L_\wedge^\vee \cong L_1\amalg L_2$ in $\sBDLat$,
and $L_1\amalg L_2\cong\sAtt(\phi_1)\amalg\sAtt(\phi_2)$.
\end{proof}

We first examine a flow example, where the coproduct describes the product attractor lattice exactly. This is be proved to be generally true for flows in Theorem~\ref{cor_flows_att_iso}.

\begin{example}\label{ex:coproduct_flow}
Consider the following decoupled system:
\[
\dot{x} = x(1-x), \qquad \dot{y} = y(1-y),
\]
with phase space $[0,1]\times[0,1]$. Its phase portrait is shown in the left of Figure~\ref{fig:threepanels}. Representative trajectories are also shown in red, corresponding to the initial conditions \(\{(0.1,0.1),\,(0.1,0.3),\,(0.3,0.1)\}.\)Each subsystem has the attractors $\emptyset$, $\{1\}$, and $[0,1]$. The attractors of $\Phi$  form a six-element 
lattice, shown in right of Figure~\ref{fig:threepanels}.

We compute the coproduct $\sAtt(\phi_1)\amalg \sAtt(\phi_2)$ in the category $\sBDLat$ and compare it with  $\sAtt(\Phi)$. We have $\emptyset=\iota_1(\emptyset)=\iota_2(\emptyset)$, $a=\iota_1(\{1\})$, $b=\iota_2(\{1\})$, and $\iota_1([0,1])=\iota_2([0,1])=[0,1]\times[0,1]$. Then, the coproduct is generated by meets and joins of these elements, and the nontrivial ones are $a\wedge b$ and $a\vee b$. This yields a six-element 
lattice, shown in middle of Figure~\ref{fig:threepanels}.

\begin{center}
\small

\begin{minipage}{0.27\linewidth}
    \centering    \includegraphics[width=\linewidth]{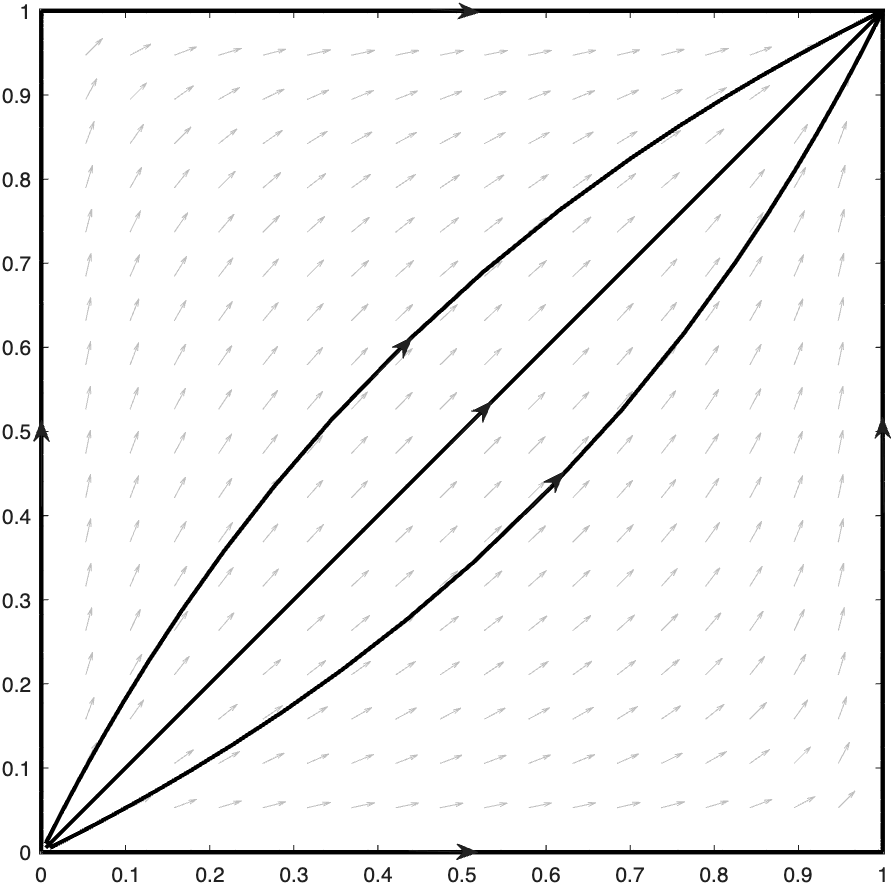}
\end{minipage}
\hfill
\begin{minipage}{0.16\linewidth}
    \centering
    \begin{tikzpicture}[scale=0.95, every node/.style={scale=1}]
        \node (Top) at (0,0)   {$[0,1]$};
        \node (Mid) at (0,-1.2) {$\{1\}$};
        \node (Bot) at (0,-2.4) {$\emptyset$};

        \draw (Top) -- (Mid);
        \draw (Mid) -- (Bot);
    \end{tikzpicture}
\end{minipage}
\hfill
\begin{minipage}{0.25\linewidth}
    \centering
    \begin{tikzpicture}[scale=0.95, every node/.style={scale=1}]
        \node (One)  at (0,0)    {$[0,1]\times[0,1]$}; 
        \node (Join) at (0,-1.2) {$a\vee b$}; 
        \node (A)    at (-1,-2.4)  {$a$}; 
        \node (B)    at ( 1,-2.4)  {$b$};
        \node (Meet) at (0,-3.6) {$a\wedge b$}; 
        \node (Zero) at (0,-4.8) {$\emptyset$}; 

        \draw (One)  -- (Join);
        \draw (Join) -- (A);
        \draw (Join) -- (B);
        \draw (A)    -- (Meet);
        \draw (B)    -- (Meet);
        \draw (Meet) -- (Zero);

    \end{tikzpicture}
\end{minipage}
\hfill
\begin{minipage}{0.30\linewidth}
    \centering
    \begin{tikzpicture}[scale=0.95, every node/.style={scale=1}]
        \node (TC)  at (0,0)    {$[0,1]\times[0,1]$}; 
        \node (MC)  at (0,-1.2) {$([0,1]\times\{1\})\cup(\{1\}\times[0,1])$}; 
        \node (AC1) at (-1.3,-2.4)  {$[0,1]\times\{1\}$}; 
        \node (AC2) at ( 1.3,-2.4)  {$\{1\}\times[0,1]$}; 
        \node (BC)  at (0,-3.6) {$\{(1,1)\}$}; 
        \node (PC)  at (0,-4.8) {$\emptyset$}; 

        \draw (TC) -- (MC);
        \draw (MC) -- (AC1); 
        \draw (MC) -- (AC2); 
        \draw (AC1) -- (BC); 
        \draw (AC2) -- (BC); 
        \draw (BC) -- (PC);

    \end{tikzpicture}
\end{minipage}

\captionof{figure}{
\textbf{Far left:} Phase portrait of $\Phi$.
\textbf{Left-middle:} $\sAtt(\phi_i)$.
\textbf{Right-middle:} $\sAtt(\phi_1)\amalg\sAtt(\phi_2)$.
\textbf{Far right:} $\sAtt(\Phi)$.}
\label{fig:threepanels}
\end{center}

\end{example}

In Example~\ref{ex:coproduct_flow}, there is a  lattice isomorphism map $\iota:\sAtt(\phi_1)\amalg\sAtt(\phi_2)\to\sAtt(\Phi)$. The discrete-time system in the following example shows this need not hold for maps.

\begin{example}\label{ex:coproduct_map}
Consider the discrete-time dynamical system defined by 
\[
x_{n+1} = f(x_n):=-\tanh(2x_n).
\]
It exhibits an attracting period-2 cycle between points $-p$ and $p$ where $\tanh(2p)=p$. Let $X=[-p,p]$. Then the attractor lattice is shown in Figure~\ref{fig:ex_lattices_new}[Left]. Now, consider the induced product system $F = f \times f$ on $[-p,p]\times[-p,p]$. The attractor lattice $\sAtt(F)$ is shown in Figure~\ref{fig:ex_lattices_new}[Right]. For convenience of  notation, we denote 
\[
P=\{p,-p\},\ A_1=\{(p,p),(-p,-p)\},\ A_2=\{(p,-p),(-p,p)\}, 
\]
\[
A_3=\{([-p,p]\times\{-p\}) \cup ([-p,p]\times\{p\})\},\ A_4=\{(\{-p\}\times[-p,p]) \cup (\{p\}\times[-p,p])\}.
\]

Similar to Example~\ref{ex:coproduct_flow}, to compute the coproduct $\sAtt(f)\amalg\sAtt(f)$, we have $\emptyset=\iota_1(\emptyset)=\iota_2(\emptyset),\ a=\iota_1(P),\ b=\iota_2(P),\ a\wedge b,\ a\vee b,\ \text{and } \iota_1(X)=\iota_2(X)=X\times X$. This coproduct is shown in  Figure~\ref{fig:ex_lattices_new}[Middle].

\begin{center}
\small

\begin{tikzpicture}[scale=0.70, every node/.style={scale=1.2},
    blueNode/.style={text=blue},
    redNode/.style={text=red}
]


\node (Top0) at (-7.5,0)   {$X$};
\node (Mid0) at (-7.5,-1.6) {$P$};
\node (Bot0) at (-7.5,-3.2) {$\emptyset$};

\draw (Top0) -- (Mid0);
\draw (Mid0) -- (Bot0);


\node[blueNode] (TopL)   at (-2.5,0)   {$X \times X$};
\node[blueNode] (JoinL)  at (-2.5,-1.2) {$a \vee b$};
\node[blueNode] (AL)     at (-3.6,-2.4) {$a$};
\node[blueNode] (BL)     at (-1.4,-2.4) {$b$};
\node[blueNode] (MeetL)  at (-2.5,-3.6) {$a \wedge b$};
\node[blueNode] (ZeroL)  at (-2.5,-6) {$\emptyset$};

\draw[blue] (TopL)  -- (JoinL);
\draw[blue] (JoinL) -- (AL);
\draw[blue] (JoinL) -- (BL);
\draw[blue] (AL)    -- (MeetL);
\draw[blue] (BL)    -- (MeetL);
\draw[blue] (MeetL) -- (ZeroL);


\node[blueNode] (TopR)  at (2.5,0)    {$X\times X$};
\node[blueNode] (AS)    at (2.5,-1.2) {$\{A_3,A_4\}$};
\node[blueNode] (B1)    at (1.4,-2.4) {$A_4$};
\node[blueNode] (B2)    at (3.6,-2.4) {$A_3$};
\node[blueNode] (CR)    at (2.5,-3.6) {$\{A_1,A_2\}$};
\node[redNode]  (A1)    at (1.4,-4.8) {$A_1$};
\node[redNode]  (A2)    at (3.6,-4.8) {$A_2$};
\node[blueNode] (ZeroR) at (2.5,-6.0) {$\emptyset$};

\draw[blue] (TopR) -- (AS);
\draw[blue] (AS)   -- (B1);
\draw[blue] (AS)   -- (B2);
\draw[blue] (B1)   -- (CR);
\draw[blue] (B2)   -- (CR);
\draw[red]  (CR)   -- (A1);
\draw[red]  (CR)   -- (A2);
\draw[blue] (A1)   -- (ZeroR);
\draw[blue] (A2)   -- (ZeroR);

\end{tikzpicture}

\captionof{figure}{
\textbf{Left:} $\sAtt(f)$.
\textbf{Middle:} $\sAtt(f)\amalg\sAtt(f)$.
\textbf{Right:} $\sAtt(F)$; the red attractors $A_1$ and $A_2$ are not in the coproduct $\sAtt(f)\amalg\sAtt(f)$.
}
\label{fig:ex_lattices_new}

\end{center}

The coproduct  is a proper sublattice of $\sAtt(F)$, because the product $P\times P$ contains two attracting periodic orbits $(\pm p,\mp p), (\pm p, \pm p)$ that are not generated by elements of $L_1\cup L_2$.
\end{example}

Example~\ref{ex:coproduct_map} shows that $\sAtt(f)\amalg\sAtt(f)\hookrightarrow\sAtt(F)$ need not be surjective. However, for flows this map is surjective, as in Example~\ref{ex:coproduct_flow} and stated in the following result, which is proved in Section~\ref{sec-CRproduct}.

\begin{theorem}\label{cor_flows_att_iso}
Let $\phi_i$ for $i=1,2$ be flows. Then, 
\[ 
\sAtt(\Phi)\cong\sAtt(\phi_1)\amalg\sAtt(\phi_2).
\]
\end{theorem}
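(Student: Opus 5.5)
By Proposition~\ref{prop:attcoprod}, $L_\wedge^\vee\cong\sAtt(\phi_1)\amalg\sAtt(\phi_2)$ is already a sublattice of $\sAtt(\Phi)$. So it remains to show that the inclusion $L_\wedge^\vee\hookrightarrow\sAtt(\Phi)$ is surjective, i.e.\ that every attractor of $\Phi$ is a finite union of products $A_1\times A_2$. I would prove this through the duality of Section~\ref{subsec:order}, following the introduction's hint that the argument goes through recurrent components. An injective lattice homomorphism $j\colon L\hookrightarrow M$ of bounded distributive lattices is an isomorphism if and only if the dual Priestley map $\Sigma M\to\Sigma L$, $I\mapsto j^{-1}(I)$, is an order isomorphism. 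Hence it suffices to show that $\Sigma\sAtt(\Phi)\to\Sigma(L_1\amalg L_2)\cong\Sigma\sAtt(\phi_1)\times\Sigma\sAtt(\phi_2)$ is an order-embedding; it is already surjective, since $j$ is injective. The second isomorphism is the standard fact that coproducts of bounded distributive lattices dualize to products of Priestley spaces. Translated into dynamics, this map is $\xi\mapsto(h_1(\xi),h_2(\xi))$, where $h_i$ is the map of Diagram~\ref{diag:RC_spec}. So the task is to prove that $H=(h_1,h_2)\colon\sRC(\Phi)\to\sRC(\phi_1)\times\sRC(\phi_2)$ is an order isomorphism for flows.

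The key steps, in order, are as follows.

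First, I would show that the recurrent set factors: $\calR(\Phi)=\calR(\phi_1)\times\calR(\phi_2)$ for flows. Using the chain-recurrence characterization, a point $(x_1,x_2)$ is chain recurrent for $\Phi$ if and only if each $x_i$ is chain recurrent for $\phi_i$. The forward implication comes from projecting chains. For the reverse, take an $(\epsilon,T)$-chain from $x_1$ to itself and one from $x_2$ to itself. For flows, the total times of the two chains can be adjusted to a common value: one may lengthen segments and reparametrize jump times, and the total time of a chain through a recurrent point can be made to be any sufficiently large $T$. The two chains then combine into a single chain for $\Phi$. This time-synchronization is exactly what fails for maps, as Example~\ref{ex:coproduct_map} shows with its period-2 orbit. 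The same synchronization shows that $x_i\sim x_i'$ in $\sRC(\phi_i)$ for $i=1,2$ implies $(x_1,x_2)\sim(x_1',x_2')$. Consequently each $\xi\in\sRC(\Phi)$ equals $\xi_1\times\xi_2$, and by Theorem~\ref{thm:RC_projection} and Remark~\ref{rmk:product-proj-both-coordinates} we get $\pi_i(\xi)=h_i(\xi)=\xi_i$. This yields a bijection $H$; here I would use the combinatorial multivalued-map characterization of \cite{KMV-0} to make the chain arguments rigorous.

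Second, I would check that $H$ and $H^{-1}$ preserve order. Order on recurrent components is chain-reachability. A chain from $\xi_1\times\xi_2$ to $\xi_1'\times\xi_2'$ projects to chains in each factor, so $H$ is monotone. Conversely, given chains in each factor, I would synchronize their durations as above, waiting inside the recurrent components as needed, so that they combine into a product chain. This gives monotonicity of $H^{-1}$. Since the Priestley topology on a finite or compact spectrum is then determined compatibly, $H$ is an isomorphism of Priestley spaces. By duality $j$ is an isomorphism, which proves $\sAtt(\Phi)\cong\sAtt(\phi_1)\amalg\sAtt(\phi_2)$.

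The main obstacle is the synchronization step in continuous time. One must show that an $(\epsilon,T)$-chain between recurrent points, or from a component to itself, can be realized with total time equal to any sufficiently large prescribed value. This requires showing that along a chain-recurrent component one can absorb small time shifts into the $\epsilon$-jumps using uniform continuity of the flow on the compact space. I would try first to prove that the set of achievable chain durations is cofinal and ``$\epsilon$-dense'' in $[T,\infty)$: concatenating loops gives arbitrarily long durations, and splitting segments with small jumps allows durations to vary continuously.
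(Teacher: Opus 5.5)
Your Priestley reduction is sound, and in one respect it is sharper than the paper's. Because $H=(h_1,h_2)$ corresponds to the dual of the injective inclusion $L_\wedge^\vee\hookrightarrow\sAtt(\Phi)$, it is automatically a continuous order-preserving surjection, so only order-reflection remains. You also get the canonical conclusion $\sAtt(\Phi)=L_\wedge^\vee$, i.e.\ every attractor is a finite union of products $A_1\times A_2$. By contrast, the paper's proof of Theorem~\ref{thm:FlowRCIsomorphism} establishes only the bijection and does not address order-reflection explicitly.

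The genuine gap is the synchronization step. It carries all of the flow-specific content, and as sketched it does not work. Equalizing the \emph{total} durations of an $(\epsilon,T)$-chain in $X_1$ and one in $X_2$ does not produce a chain for $\Phi$. A chain of $\Phi$ jumps in both coordinates at the same instants, so you need a common sequence of jump times in which every product segment has length at least $T$. Merging the two jump sets generally creates short segments, and moving a jump by an amount comparable to $T$ cannot be absorbed by uniform continuity. Closing this needs an additional discretization, for instance converting each factor chain into an $\epsilon'$-chain of a common time-$\tau$ map and then equalizing step counts; you have not supplied this. You have also misplaced where maps fail. $\calR$ factors for maps as well (Proposition~\ref{prop-CR}; repeat loops up to a common length). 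What fails in Example~\ref{ex:coproduct_map} is that recurrent components are products.

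The paper avoids synchronization entirely. With the attractor-based definition of $\sim$, any connected subset of $\calR$ lies in a single recurrent component, for any system. This is because, for each attractor $A$, the sets $A$ and $A^*$ are disjoint closed sets covering $\calR$. The flow-specific input is \cite[Thm.~5.9]{Ayala2006}: the recurrent components of a flow are the connected components of $\calR$. Hence, for $C_i\in\sRC(\phi_i)$, the connected set $C_1\times C_2\subset\calR(\Phi)$ (Corollary~\ref{cor-CR}) lies in a single $\xi$. Theorem~\ref{thm:RC_projection} then forces $\xi=C_1\times C_2$. Connectedness of flow components is the correct, already-available form of your synchronization.

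Your order-reflecting step does not need synchronization either. Pair an $(\epsilon,T)$-chain from $C_1'$ to $C_1$ with the exact forward orbit of a point of the invariant set $C_2$, using zero jumps at the same times. This gives $C_1\times C_2\le C_1'\times C_2$, and the symmetric construction plus transitivity finishes the argument. You should, however, cite the identification of the paper's attractor-defined order with chain reachability, since the paper does not state it.
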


\subsection{Products of recurrent components}
\label{sec-CRproduct}

To establish Theorem~\ref{cor_flows_att_iso}, we study the recurrent structure of product systems. By the projection results of Section~\ref{sec:projection-RC}, recurrent components of a product system project onto recurrent components of the factor systems, yielding the following corollary.

\begin{corollary}\label{cor-chain_comp}
For the product system \(\Phi=\phi_1\times\phi_2\),
\[
\mathcal{R}(\Phi)\subset \mathcal{R}(\phi_1)\times \mathcal{R}(\phi_2).
\]
\end{corollary}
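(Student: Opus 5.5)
We prove Corollary~\ref{cor-chain_comp} by applying Theorem~\ref{thm:RC_projection} to each coordinate of the product system separately. Since $\Phi=\phi_1\times\phi_2$ is a product system, the constructions of Section~\ref{sec-composition} can be made with either factor as the base. This is recorded in Remark~\ref{rmk:product-proj-both-coordinates} and in the comment after Diagram~\eqref{diag:projection-attractor}: one may replace $\psi,\pi_Y$ by $\phi,\pi_X$ and take $\iota(A)=A\times\Inv(Y)$. Hence Theorem~\ref{thm:RC_projection} provides two order-preserving maps,
\[
h_1\colon\sRC(\Phi)\to\sRC(\phi_1),\qquad h_2\colon\sRC(\Phi)\to\sRC(\phi_2),
\]
such that $\pi_{X_1}(\xi)\subset h_1(\xi)$ and $\pi_{X_2}(\xi)\subset h_2(\xi)$ for every $\xi\in\sRC(\Phi)$.

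Now let $z=(x_1,x_2)\in\mathcal R(\Phi)$. By definition, $\mathcal R(\Phi)$ is the union of the recurrent components, which are the equivalence classes partitioning it. So $z$ lies in some $\xi\in\sRC(\Phi)$. It follows that $x_i=\pi_{X_i}(z)\in\pi_{X_i}(\xi)\subset h_i(\xi)$. Each $h_i(\xi)$ is an element of $\sRC(\phi_i)$, so it is a subset of $\mathcal R(\phi_i)$. Therefore $x_i\in\mathcal R(\phi_i)$ for $i=1,2$, which gives $z\in\mathcal R(\phi_1)\times\mathcal R(\phi_2)$.

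The only point needing care is the symmetric use of Theorem~\ref{thm:RC_projection} for the first coordinate. Its proof uses only three ingredients, and each has a mirror for the $X_1$-projection in the product case:
\begin{itemize}
\item Lemma~\ref{lem-omega-base} holds verbatim for $\pi_{X_1}$, since the product is symmetric.
\item Proposition~\ref{prop:embedding} holds with $\iota_1(A)=A\times\Inv(X_2)$, using Remark~\ref{rem:product_inv}.
\item The inclusion $Q\subset\iota(\pi(Q))$ holds in either coordinate.
\end{itemize}

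If one prefers to avoid the recurrent-component machinery altogether, there is a direct argument from the definition $\mathcal R=\bigcap_A(A\cup A^*)$. Take $A\in\sAtt(\phi_1)$ and set $Q=A\times\Inv(X_2)=\iota_1(A)$. By Proposition~\ref{prop:product_rep}(iii) with the roles of the coordinates swapped, $Q^*=A^*\times X_2$. Since $z\in\mathcal R(\Phi)\subset Q\cup Q^*$, we get $x_1\in A\cup A^*$. As $A$ was arbitrary, $x_1\in\mathcal R(\phi_1)$, and the same argument gives $x_2\in\mathcal R(\phi_2)$. This second route is short and self-contained, so I would present it as the main proof and mention the first route as the interpretation through Theorem~\ref{thm:RC_projection}.
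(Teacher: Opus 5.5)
Your proposal is correct. Your first route is essentially the paper's own argument, which simply invokes the projection results. There you apply Theorem~\ref{thm:RC_projection} in each coordinate, so that each $\xi\in\sRC(\Phi)$ projects into a single component $h_i(\xi)\subset\mathcal{R}(\phi_i)$.

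Your second route, which you propose as the main proof, is genuinely different and more elementary. It uses two facts:
\begin{enumerate}
\item $\mathcal{R}(\Phi)=\bigcap_{Q\in\sAtt(\Phi)}(Q\cup Q^*)$ is contained in the intersection over the subfamily of cylinder attractors $\iota_1(A)=A\times\Inv(X_2)$;
\item the dual-repeller formula $\iota_1(A)^*=A^*\times X_2$ from Proposition~\ref{prop:product_rep}(iii), with the coordinates swapped.
\end{enumerate}
In fact only the inclusion $\iota_1(A)^*\subset A^*\times X_2$ is needed. It follows directly from Lemma~\ref{lem-omega-base} together with the invariance of $\omega(x_2)$. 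This route avoids prime ideals, Priestley duality, and the Cantor-intersection argument inside the proof of Theorem~\ref{thm:RC_projection}.

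What the paper's route buys in exchange is component-level information. Each recurrent component of $\Phi$ lands inside one recurrent component of each factor. This yields the order-preserving map \eqref{eq:RCinc}, and it is exactly what is reused in the proof of Theorem~\ref{thm:FlowRCIsomorphism}. Your direct argument gives only the set-level inclusion, but that is all the corollary asserts.
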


We next establish the reverse inclusion using a different approach based on finite grid approximations and minimal multivalued maps. It provides a computational interpretation of the product structure by showing that the recurrent dynamics of the product can be assembled from computations on the lower-dimensional factors.

We first consider a discrete dynamical system generated by iteration of the function $f\colon X_1\times X_2\to X_1\times X_2$ where $f=f_1\times f_2$ for functions $f_i\colon X_i\to X_i$.
The proof uses results from \cite{KMV-0}, which characterize the chain recurrent set $\mathcal{R}(f)$ of a map $f$ in terms of discretizations of phase space which generate combinatorial approximations of the global dynamics. As seen in the proof below, this characterization behaves well with respect to products.

Let $\calX^n_i$ be a sequence of grids on $X_i$ with $\diam(\calX^n_i)\to 0$ as $n\to\infty$, see \cite[Defn.~2.1 and Thm.~2.2]{KMV-0}. We recall that a grid on $X$ is a finite partition of $X$ into nonempty regular closed sets. Then, $\calX^n=\calX_1^n\times \calX_2^n$ is a sequence of grids on $X_1\times X_2$, since products of regular closed sets are regular closed. 
With  metric $d((x_1,x_2),(y_1,y_2)) = \max\{d_1(x_1,y_1),  d_2(x_2,y_2)\}$, we have $\diam(\calX^n) = \max\{\diam(\calX_1^n),\diam(\calX_2^n)\}$. Then, $\diam(\calX^n)\to 0$ as $n\to\infty$ if and only if $\diam(\calX_i^n)\to 0$ as $n\to\infty$. The {\em minimal multivalued map} $\calF_n\colon\calX^n\mvmap\calX^n$ is given by
\[
\begin{aligned}
\calF_n(\nu_1\times\nu_2)&=
\{\mu_1\times\mu_2~|~f(\nu_1\times\nu_2)\cap \mu_1\times\mu_2\neq\emptyset\}\\
&=\{\mu_1\times\mu_2~|~f_i(\nu_1)\cap \mu_i\neq\emptyset \text{ for $i=1,2$}\}\\ 
&= (\calF_{n,1}\times\calF_{n,2})(\nu_1\times\nu_2)
\end{aligned}
\]
where $\calF_{n,i}$ is the minimal multivalued map for $f_i$.
\vskip 6pt\noindent
The {\em recurrent set}  of $\calF_n$ is defined by $\calR(\calF_n)\colon=\{\Xi\in\calX^n~|~\exists p>0 \text{ with } \Xi\in\calF^p_n(\Xi)\}$, see \cite[Defn.~3.13]{KMV-0}. It follows that $\calR(\calF_n)=\calR(\calF_{n,1})\times\calR(\calF_{n,2})$. Theorem~5.6 in \cite{KMV-0} implies that 
\[
\calR(f)=\bigcap_{n}|\calR(\calF_n)|
\]
where $|\cdot|\colon \calX\to X_1\times X_2$ denotes the geometric realization map. Therefore,
\[
\calR(f)=\bigcap_{n}|\calR(\calF_n)|=\bigcap_{n}|\calR(\calF_{n,1})\times\calR(\calF_{n,2})|=\bigcap_{n}|\calR(\calF_{n,1})|\times|\calR(\calF_{n,2})|=\mathcal{R}(f_1)\times\mathcal{R}(f_2).
\]
This proves the following proposition.
\begin{proposition}\label{prop-CR}
    For a product of maps, $\mathcal{R}(f)=\mathcal{R}(f_1)\times\mathcal{R}(f_2)$.
\end{proposition}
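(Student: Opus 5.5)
The plan is to establish the two inclusions separately. The inclusion $\mathcal{R}(f)\subset\mathcal{R}(f_1)\times\mathcal{R}(f_2)$ is already Corollary~\ref{cor-chain_comp}, which rests on Theorem~\ref{thm:RC_projection} applied in each coordinate via Remark~\ref{rmk:product-proj-both-coordinates}. The substance is the reverse inclusion, and for this I would use the combinatorial characterization of \cite[Thm.~5.6]{KMV-0}: for any sequence of grids whose diameters tend to zero, $\calR(f)=\bigcap_n|\calR(\calF_n)|$, where $\calF_n$ is the minimal multivalued map. Applying this to the product grids $\calX^n=\calX^n_1\times\calX^n_2$ in the max metric, where $\diam(\calX^n)\to0$, would in fact give both inclusions at once.

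The first step is to check that the minimal multivalued map on a product grid factors as $\calF_n=\calF_{n,1}\times\calF_{n,2}$. This follows because $f(\nu_1\times\nu_2)=f_1(\nu_1)\times f_2(\nu_2)$ meets $\mu_1\times\mu_2$ exactly when each coordinate meets. Iterating, $\calF_n^p=\calF_{n,1}^p\times\calF_{n,2}^p$ for every $p$.

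The second step is the combinatorial identity $\calR(\calF_n)=\calR(\calF_{n,1})\times\calR(\calF_{n,2})$, which needs the most care because the return times may differ. One direction is immediate: if $\nu_1\times\nu_2\in\calF_n^p(\nu_1\times\nu_2)$, then $\nu_i\in\calF_{n,i}^p(\nu_i)$ for both $i$. For the other direction, suppose $\nu_i\in\calF_{n,i}^{p_i}(\nu_i)$. Set $p=p_1p_2$. Concatenating the $p_i$-cycle with itself gives $\nu_1\in\calF_{n,1}^{p}(\nu_1)$ and $\nu_2\in\calF_{n,2}^{p}(\nu_2)$, hence $\nu_1\times\nu_2\in\calF_n^p(\nu_1\times\nu_2)$. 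This common-period trick is precisely what is available for maps with discrete time.

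The final step is the set-theoretic identity. Geometric realization commutes with products, so $|\calR(\calF_{n,1})\times\calR(\calF_{n,2})|=|\calR(\calF_{n,1})|\times|\calR(\calF_{n,2})|$. We also need $\bigcap_n(B_n\times C_n)=(\bigcap_nB_n)\times(\bigcap_nC_n)$, which holds for arbitrary sets. Combining these with \cite[Thm.~5.6]{KMV-0} applied to each $f_i$ on the grids $\calX^n_i$ yields $\calR(f)=\calR(f_1)\times\calR(f_2)$.

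The main point to verify is that the hypotheses of \cite[Thm.~5.6]{KMV-0} are met by the product grids. The cells must be regular closed; products of regular closed sets are regular closed, since the closure of the interior of $\nu_1\times\nu_2$ is the product of the closures of the interiors. The cells must also partition the space in the required sense, which follows because the cells of each factor grid do.
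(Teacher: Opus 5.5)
Your proposal is correct and follows the paper's argument essentially step for step. Like the paper, you factor the minimal multivalued map on the product grids $\calX^n_1\times\calX^n_2$, identify $\calR(\calF_n)=\calR(\calF_{n,1})\times\calR(\calF_{n,2})$, and then pass to the intersection of geometric realizations via \cite[Thm.~5.6]{KMV-0}. Your common-period argument with $p=p_1p_2$ usefully makes explicit the step the paper states only as ``it follows that,'' and, as you note yourself, the appeal to Corollary~\ref{cor-chain_comp} is unnecessary since the grid argument gives both inclusions.
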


\begin{corollary}\label{cor-CR} 
For a product of dynamical systems, $\calR(\mathrm{\Phi})=\calR(\phi_1)\times\calR(\phi_2)$.
\end{corollary}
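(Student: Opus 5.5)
The plan is to reduce the general statement to Proposition~\ref{prop-CR} by passing to the time-one map. For a map there is nothing new to prove: $\Phi$ is generated by $f=f_1\times f_2$ with $f_i=\phi_i(1,\cdot)$, and Proposition~\ref{prop-CR} applies directly. So the remaining work is for flows, $\T^+=\R^+$.

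The key fact I would use is that the attractors of a semiflow coincide with the attractors of its time-one map $\phi(1,\cdot)$. This is a standard result in the framework of \cite{KMV-1a}, since an attractor of the map is $\omega(U)$ for some attracting neighborhood $U$, and $U$ can be thickened to a forward invariant attracting neighborhood of the flow. Dual repellers also agree: for the flow, $\omega(x)\cap A=\emptyset$ holds if and only if the discrete $\omega$-limit of $x$ misses $A$. This is because $A$ is invariant and attracting, so a point whose discrete limit set misses $A$ cannot have continuous-time limit points in $A$ once it enters a neighborhood of $A$. Since $\calR$ is defined as $\bigcap_{A}(A\cup A^*)$ over the attractor lattice, it follows that $\calR(\phi)=\calR(\phi(1,\cdot))$. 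Applying this to $\Phi$ and to each $\phi_i$ gives
\[
\calR(\Phi)=\calR(f_1\times f_2)=\calR(f_1)\times\calR(f_2)=\calR(\phi_1)\times\calR(\phi_2),
\]
where the middle equality is Proposition~\ref{prop-CR}. Here $\Phi(1,\cdot)=f_1\times f_2$ holds by definition.

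The main obstacle is justifying the identification of $\calR$ for a semiflow with $\calR$ of its time-one map, in the paper's generality (non-invertible, non-surjective semiflows). If the attractor-lattice identification is not available to cite, I would instead argue through chain recurrence. An $\epsilon$-chain of the flow with jump times at least $1$ can be converted into an $\epsilon'$-chain of the time-one map by uniform continuity of $\phi$ on $[0,2]\times X$. Conversely, chains of the time-one map are flow chains directly. Together these give equality of chain recurrent sets, and \cite{Conley,KMV-0} identify those with $\calR$.

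As a cross-check, Corollary~\ref{cor-chain_comp} already supplies the inclusion $\subset$ directly for flows. So only the reverse inclusion $\calR(\phi_1)\times\calR(\phi_2)\subset\calR(\Phi)$ really needs the time-one reduction together with Proposition~\ref{prop-CR}.
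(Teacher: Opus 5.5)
Your argument is essentially the paper's: pass to the time-one map $\Phi(1,\cdot)=f_1\times f_2$ and apply Proposition~\ref{prop-CR}, where the paper simply cites Theorem~5 of \cite{HurleyCR} for $\calR(\phi)=\calR(\phi(1,\cdot))$. Citing that result is also the right way to handle your ``main obstacle'', because both of your sketches are incomplete as written. Thickening a time-one attracting neighborhood $U$ with $A=\omega(U)$ only produces the flow attractor $\phi([0,1],A)$, so you would still need to show that $A$ is already $\phi$-invariant. And uniform continuity alone does not turn flow chains with non-integer times $t_i$ into time-one chains: you must track the fractional phase shift and then close the loop, which is the real content of Hurley's theorem.
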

\begin{proof}\hspace{-3pt}:
By Theorem~5 in \cite{HurleyCR}, the chain recurrent set of a continuous time system is the same as the chain recurrent set of its time-1 map, which implies that the result in Proposition~\ref{prop-CR} holds for flows as well as maps.  
\end{proof}

Although the chain recurrent set of a product system is the product of the chain recurrent sets of its factors, the recurrent components need not coincide; i.e.\ the surjective, order-preserving map 
\begin{equation}\label{eq:RCinc}
\pi_1\times\pi_2\colon\sRC(\Phi)\to\sRC(\phi_1)\times\sRC(\phi_2), \quad{\xi\mapsto\pi_1(\xi)\times\pi_2(\xi)}
\end{equation}
 need not be an isomorphism, as the following example shows.

\begin{example}\label{ex:coproduct_map-part2}
Returning to Example~\ref{ex:coproduct_map}, Figure~\ref{fig:RC-discrete} compares the posets $\sRC(\phi_1)\times\sRC(\phi_2)$ and $\sRC(\Phi).$

\begin{center}
\small

\begin{tikzpicture}[scale=0.72, every node/.style={scale=1.2}]


\node (R0) at (-8,-1) {$\{0\}$};
\node (Rp) at (-8,-3) {$\{p,-p\}$};

\draw (R0) -- (Rp);


\node (A)  at (-3.25,-1)   {$\{(0,0)\}$}; 
\node (B1) at (-1.75,-2.5) {$\{0\}\times\{p,-p\}$}; 
\node (B2) at (-4.75,-2.5) {$\{p,-p\}\times\{0\}$};
\node (C)  at (-3.25,-4)   {$A_1\cup A_2$}; 

\draw (A) -- (B1);
\draw (A) -- (B2);
\draw (B1) -- (C);
\draw (B2) -- (C);


\node (E)  at (4,-1)   {$\{(0,0)\}$}; 
\node (F1) at (2,-2.5) {$\{(p,0),(-p,0)\}$}; 
\node (F2) at (6,-2.5) {$\{(0,p),(0,-p)\}$}; 
\node (G1) at (2,-4)   {$A_1$}; 
\node (G2) at (6,-4)   {$A_2$}; 

\draw (E) -- (F1);
\draw (E) -- (F2);
\draw (F1) -- (G1);
\draw (F2) -- (G2);
\draw (F1) -- (G2);
\draw (F2) -- (G1);

\end{tikzpicture}

\captionof{figure}{
\textbf{Left:} $\sRC(\phi_i)$.
\textbf{Middle:} $\sRC(\phi_1)\times\sRC(\phi_2)$.
\textbf{Right:} $\sRC(\Phi)$.
The surjection in Eqn.~\ref{eq:RCinc} maps $A_1\mapsto A_1\cup A_2$ and $A_2\mapsto A_1\cup A_2$.
}
\label{fig:RC-discrete}

\end{center}
\end{example}

For flows, recurrent components behave well with respect to products. 

\begin{theorem}\label{thm:FlowRCIsomorphism}
The product flow  $\mathrm{\Phi}=\phi_1\times\phi_2$ satisfies
$\sRC(\mathrm{\Phi}) \cong \sRC(\phi_1) \times \sRC(\phi_2).$
\end{theorem}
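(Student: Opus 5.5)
We will prove that the surjective, order-preserving map $\pi_1\times\pi_2\colon\sRC(\Phi)\to\sRC(\phi_1)\times\sRC(\phi_2)$ of Equation~\eqref{eq:RCinc} is injective and that its inverse is order preserving. By Corollary~\ref{cor-CR}, $\calR(\Phi)=\calR(\phi_1)\times\calR(\phi_2)$. Hence every recurrent component $\xi$ of $\Phi$ lies inside $\pi_1(\xi)\times\pi_2(\xi)$, and the sets $\xi_1\times\xi_2$ with $\xi_i\in\sRC(\phi_i)$ partition $\calR(\Phi)$. Injectivity is therefore equivalent to the claim that each product $\xi_1\times\xi_2$ is a single recurrent component of $\Phi$, that is, that it cannot be split by an attractor of $\Phi$.

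To prove this claim we use the chain-recurrence characterization. For flows, recurrent components are chain-transitive classes, equivalently the classes of the relation $x\sim x'$ if for every $\epsilon,T>0$ there are $(\epsilon,T)$-chains from $x$ to $x'$ and back. Take $(x_1,x_2),(x_1',x_2')\in\xi_1\times\xi_2$ and fix $\epsilon,T$. There is an $(\epsilon,T)$-chain in $X_1$ from $x_1$ to $x_1'$ with total time $S_1$, and one in $X_2$ from $x_2$ to $x_2'$ with total time $S_2$. The main obstacle is to synchronize these two chains so that they have the same total time, because a chain for $\Phi$ must advance both coordinates by the same time. This is exactly where flows and maps differ: in Example~\ref{ex:coproduct_map-part2}, the period-two parity obstructs synchronization.

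For flows we plan to resolve the obstacle with two facts. First, inside a chain-transitive class one can concatenate a chain with a chain loop $x_i\to x_i$. Second, in continuous time the jump times are arbitrary reals $\ge T$, so a segment of length $t\ge T$ can be subdivided or lengthened freely. Concretely, we append to the shorter chain a loop through its endpoint, for example a chain from $x_1'$ back to itself. We then adjust the times: a chain of total time $S$ using $k$ jumps can be re-timed to any total time $S'\ge S$ by lengthening the last segment. Lengthening is done by inserting extra orbit time, which is legitimate since the same point is followed by the flow and no error is introduced. We use continuity (uniform continuity on compact sets) to keep the jump errors below $\epsilon$ when we split segments. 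This lets both chains be made to share the common sequence of times $t_0,\dots,t_k\ge T$ after refining both to common breakpoints: we choose the total time $S\ge\max(S_1,S_2)$ and place breakpoints at the union of the jump times. Each coordinate's sub-segment then has length at least $T$ once we first pass to $(\epsilon,2T)$-chains and merge short pieces. The product of two synchronized chains is an $(\epsilon,T)$-chain for $\Phi$ in the max metric, so $(x_1,x_2)\sim(x_1',x_2')$ and injectivity follows.

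Finally we check that the inverse map is order preserving. Using the order characterization via chains ($\xi\ge\xi'$ if and only if there are chains from $\xi$ to $\xi'$), a chain from $\xi_1$ to $\xi_1'$ and a chain from $\xi_2$ to $\xi_2'$ are synchronized by the same padding argument. The padding now uses loops at the starting and ending components together with orbit lengthening. The result is a product chain from $\xi_1\times\xi_2$ to $\xi_1'\times\xi_2'$. Combined with the monotonicity of $\pi_1\times\pi_2$ from Equation~\eqref{eq:RCinc}, this yields the poset isomorphism $\sRC(\Phi)\cong\sRC(\phi_1)\times\sRC(\phi_2)$.
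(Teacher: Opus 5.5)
Your reduction is sound: chain-transitive classes of a flow coincide with the recurrent components, and projections of $\Phi$-chains are factor chains, so it suffices to show each $\xi_1\times\xi_2$ is chain transitive for $\Phi$. The gap is in the synchronization step, which carries the whole content of the theorem.

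\textbf{The retiming claim is false.} You assert that a chain can be re-timed to any total time $S'\ge S$ by lengthening its last segment, with ``no error introduced.'' Lengthening the last segment by $\Delta$ replaces the final jump by $d(\phi(t_{k-1}+\Delta,z_{k-1}),z_k)$. The point $\phi(t_{k-1}+\Delta,z_{k-1})$ is in general far from $z_k$: for a circle rotating with period $1$, stretching an exact orbit segment by $1/2$ lands at the antipode.

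\textbf{Nothing in the argument uses continuous time.} Read with integer $\Delta$, the same argument would synchronize chains for the map $F=f\times f$ of Example~\ref{ex:coproduct_map-part2} and give $(p,p)\sim(p,-p)$. That is false: near the attracting $2$-cycle, small chains from $p$ to $p$ have even length and from $p$ to $-p$ odd length. So the one place where flows must differ from maps is asserted without a valid reason.

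\textbf{Merging short pieces has the same problem.} Aligning a jump of one coordinate with a nearby jump of the other can move it by up to $T$ in time, and that error must be estimated.

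\textbf{How to repair it.} The missing flow-specific ingredient is that $\sup_{w\in X_i} d(\phi_i(\delta,w),w)\to 0$ as $\delta\to 0$, by uniform continuity on $[0,1]\times X_i$.
\begin{enumerate}
\item Starting from $(\epsilon/2,T)$-chains, each segment can be stretched by any amount in $[0,\delta_0]$ at extra cost $<\epsilon/2$. For maps, $\delta$ must be a positive integer, and this is exactly what fails.
\item Pad a chain of total time $S$ with $k$ segments by $m$ copies of a loop at its endpoint, the loop having total time $L$ and $k_L$ segments. The achievable total times fill $[S+mL,\,S+mL+(k+mk_L)\delta_0]$. Consecutive intervals overlap once $(k+mk_L)\delta_0\ge L$, so every sufficiently large total time is realized in each factor, and you can pick a common one.
\item To get common jump times, split one chain's segments to lengths in $[T,2T]$ and use its breakpoints for the product chain. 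Absorb the boundedly many interior jumps of the other chain in each window, using uniform continuity of $\phi_2$ on $[0,2T]\times X_2$. This requires starting from $(\delta,T)$-chains with $\delta=\delta(\epsilon,T)$ small.
\end{enumerate}
With these repairs your route works. Unlike the paper's proof, it also verifies order reflection explicitly.

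\textbf{The paper's route.} The paper avoids synchronization entirely. By \cite{Ayala2006}, the recurrent components of a flow are the connected components of $\calR$. By Corollary~\ref{cor-CR}, the connected set $C_1\times C_2$ then lies in a single $\xi\in\sRC(\Phi)$. Theorem~\ref{thm:RC_projection}, together with maximality of connected components, forces $\xi=C_1\times C_2$. In that argument, connectedness is where continuous time enters.
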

\begin{proof}
By Theorem~5.9 in \cite{Ayala2006}, for a flow on a compact metric space, the recurrent components are the connected components of $\calR$. For $C_1\in\sRC(\phi_1)$ and $C_2\in\sRC(\phi_2)$, $C_1\times C_2$ is connected in the product topology. 
By Corollary~\ref{cor-CR}, $C_1\times C_2\subset\calR(\mathrm{\Phi}),$ which implies $C_1\times C_2\subset\xi$ for some $\xi\in\sRC(\mathrm{\Phi})$. 
By Theorem~\ref{thm:RC_projection}, $\pi_i(\xi)\subset C'_i$ for some recurrent components $C'_i\in\sRC(\phi_i)$ for $i=1,2$. Thus $C_i\subset C'_i$ which implies $C_i=C'_i$ for $i=1,2$ due to maximality of connected components. Hence $\xi=C_1\times C_2.$
\end{proof}

\begin{proof}{\em of Theorem~\ref{cor_flows_att_iso}:}
The Priestley duality described in Section~\ref{subsec:order} provides an explicit reconstruction of  $\sAtt(\phi)$ from $\sRC(\phi)$   that is given by $\sAtt(\mathrm{\phi})\cong \sO^{\mathrm{clp}}(\sRC(\mathrm{\phi}))$, see \cite{KV25}. Furthermore, since the clopen downset functor, $\sOc,$ is a contravariant dual equivalence between Priestley spaces and bounded, distributive lattices, the product of Priestley spaces is dual to the coproduct of the corresponding lattices, ie.\ $\sOc(P_1\times P_2)\cong\sOc(P_1)\amalg\sOc(P_2)$, see \cite{Davey}. Therefore, 
\[
\sOc(\sRC(\Phi))\cong \sOc(\sRC(\phi_1)\times\sRC(\phi_2))\cong\sOc(\sRC(\phi_1))\amalg\sOc(\sRC(\phi_2))
\]
by Theorem~\ref{thm:FlowRCIsomorphism}, which implies \( \sAtt(\Phi)\cong\sAtt(\phi_1)\amalg\sAtt(\phi_2) \).
\end{proof}

\section{Attractor Lattices in Skew-Product Systems}\label{sec:skew}

Consider the skew–product flow
\[
\Phi_\epsilon \colon \R^+\times X\times Y \to X\times Y, \qquad \Phi_\epsilon(t,x,y) = (\phi_\epsilon(t,x,y), \psi_\epsilon(t,y)),
\]
where $\psi_\epsilon(t,y) = \psi(\epsilon t, y)$ is the flow of $\dot y = \epsilon g(y)$.

Note that for all $\epsilon>0$, the flows $\psi_\epsilon$ are conjugate by the identity map and reparametrization in time, so their attracting neighborhood and attractor lattice structures are independent of~$\epsilon$. Therefore, whenever no confusion can arise, we suppress the subscript and write simply $\psi$.

Let $\phi_0\colon\R^+\times X\times Y\to X$ be the parameterized flow of $\dot x=f(x,y)$ defined by continuous map $\obphio\colon Y\to\sDS(X,\R^+)$. Recalling the sheaf structure for parametrized systems described in Section~\ref{subsec:sheaf},
the \'{e}tal\'{e} space associated with $\phi_0$ is 
\[
(\obphio)^{-1}\mathrm{\Pi}[\sAtt]=\{(y,\mathbf{\phi_0}(\cdot,\cdot,y),A)\in Y\times\sDS(X,\R^+)\times\sAtt(\phi_0(\cdot,\cdot,y))\}
\] 
which we denote by $\mathrm{\Pi}_0[\sAtt]$. We denote a section in this sheaf over an open set $U$  in $Y$ by a continuous map $\sigma\colon U\to\mathrm{\Pi}_0[\sAtt]$  such that $\pi\circ\sigma=\mathrm{id}_U$, see also Section~8 of \cite{DKV}. 

\subsection{Constructing singular attracting neighborhoods in skew-product flows}\label{subsec:anbhd-skew}

Let $U\subset Y$ be open with $A\subset U$ and $\omega_{\psi}(U)=A\in\sAtt(\psi)$. Let
\[
\sigma: U \to \Pi_0[\sAtt], \qquad y \mapsto \sigma(y)\in\sAtt(\phi_0(\cdot,\cdot,y)),
\]
be a continuous section of attractors over $U$, cf.\ Figure~\ref{fig:skew-product-neighborhoods}.  We first construct parameter neighborhoods on which the parametrized dynamics admits uniform attracting neighborhoods realizing the section $\sigma$.

\begin{lemma}\label{lem:fiber-block}
For every \( y \in A \), there exist an open neighborhood \( V_y \) with $\cl(V_y)\subset U$, a compact attracting neighborhood \( N_y \subset X \), and a time \( T_y>0 \) such that for all \( y' \in \cl(V_y) \):
\begin{enumerate}\itemsep0.2em
\item \( \phi_0(t, N_y, y') \subset \Int (N_y) \) for all \( t \ge T_y \);
\item \( \omega_{\phi_0(\cdot,\cdot,y')}(N_y) = \sigma(y'). \)
\end{enumerate}
\end{lemma}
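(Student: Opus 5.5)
The plan is to unpack what continuity of the section $\sigma$ means in the \'{e}tal\'{e} space $\Pi_0[\sAtt]$ at the point $y\in A\subset U$. By the construction of the topology in \cite{DKV}, a basic neighborhood of $(y,\phi_0(\cdot,\cdot,y),\sigma(y))$ is determined by an attracting neighborhood $N$ of $\phi_0(\cdot,\cdot,y)$ with $\omega(N)=\sigma(y)$. It consists of the pairs $(y',\phi_0(\cdot,\cdot,y'),\omega_{\phi_0(\cdot,\cdot,y')}(N))$, where $y'$ ranges over parameters for which $N$ remains an attracting neighborhood. So the first step is to fix such an $N$ and replace it by a compact one. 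We take $N_y:=\cl(N)$, or alternatively $\phi_0(T,\cl N,y)$ for large $T$; the choice must still contain $\sigma(y)$ in its interior and satisfy $\phi_0(t,N_y,y)\subset\Int N_y$ for $t\ge\tau$. Lemma~\ref{lem-omega-base}-type facts are not needed here; we only use that $\omega(\cl U)=\omega(U)$ and that $\omega(U)=\Inv(U)$ for attracting neighborhoods.

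The second step is uniformity in the parameter. The defining condition of an attracting neighborhood is open in $\sDS(X,\R^+)$, and $\obphio$ is continuous. Hence there is an open $W\ni y$ such that $N_y$ is an attracting neighborhood for $\phi_0(\cdot,\cdot,y')$ for all $y'\in W$. The delicate point is to obtain a \emph{single} time $T_y$. For this we use the standard argument. Choose $T$ with $\phi_0(T,N_y,y)\subset\Int N_y$. By compactness of $N_y$ and joint continuity of $\phi_0$ on $[T,2T]\times N_y\times Y$, there is an open $W'\ni y$ with $\phi_0(t,N_y,y')\subset\Int N_y$ for all $t\in[T,2T]$ and $y'\in\cl W'$. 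By the semigroup property, every $t\ge T$ is a sum of times in $[T,2T]$, so iterating gives item (1) with $T_y:=T$ for all $y'\in\cl W'$.

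The third step is item (2). Continuity of $\sigma$ at $y$ gives an open $W''\ni y$ with $\sigma(W'')$ contained in the basic open set determined by $N_y$. By definition of that set, this means $\sigma(y')=\omega_{\phi_0(\cdot,\cdot,y')}(N_y)$ for $y'\in W''$. Finally, choose $V_y$ open with $y\in V_y$ and $\cl(V_y)\subset W'\cap W''\cap U$; this is possible in a metric space. Item (2) is claimed for $y'\in\cl(V_y)$, so $V_y$ is shrunk so that its closure lies inside the open set $W''$.

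The main obstacle is the third step. It requires matching the paper's subbasis for the \'{e}tal\'{e} topology from \cite{DKV} with the statement ``the section is locally realized by a single attracting neighborhood''. If the subbasic sets are described via continuation through some attracting neighborhood $N'$ that is not closed, we pass to $\cl N'$. If instead they are described through a local continuation family, we first take the neighborhood $N'$ furnished at $y$ and then intersect the corresponding parameter sets. In either case, continuity of $\sigma$ at one point $y$ yields a single $N'$ valid on a parameter neighborhood, and the remaining steps go through as above.
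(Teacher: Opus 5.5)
Your argument is correct. It has the same skeleton as the paper's proof: realize $\sigma$ near $y$ by one compact attracting neighborhood, get a uniform entry time, then shrink so that $\cl(V_y)$ lies in the good parameter set. You justify both substeps differently, though.

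\textbf{Local realization.} The paper cites Lemma~7.10 of \cite{DKV}; you derive it directly from the \'etal\'e topology. The point $(y,\sigma(y))$ lies in the subbasic set determined by any attracting neighborhood $N$ with $\omega(N)=\sigma(y)$. So continuity of $\sigma$ at the single point $y$ already forces $\sigma(y')=\omega_{\phi_0(\cdot,\cdot,y')}(N)=\omega_{\phi_0(\cdot,\cdot,y')}(\cl N)$ on an open set of parameters. This makes the worry in your last paragraph moot, since one subbasic set suffices. As a byproduct it shows that any closed attracting neighborhood of $\sigma(y)$ in the fiber over $y$ can serve as $N_y$.

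\textbf{Uniform time.} For item~(1) the paper only appeals to continuity and compactness of $\cl(V_y)$. That tacitly needs a locally uniform entry time near each $y'$ before compactness can be used. Your tube-lemma argument on $[T,2T]\times N_y$ supplies exactly this, combined with writing $t\ge T$ as $k\cdot(t/k)$, where $k=\lfloor t/T\rfloor$ and $t/k\in[T,2T)$. Since $V_y$ is yours to choose, you need the argument only at $y$, so compactness of $\cl(V_y)$ is never used.

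\textbf{Two small repairs.}
\begin{enumerate}
\item Take $T\ge\tau$, the attracting time at $y$. Inclusion at the single time $T$ does not give inclusion on all of $[T,2T]$.
\item Drop the alternative $N_y=\phi_0(T,\cl N,y)$. It is unnecessary, and since the paper does not assume injectivity, this image need not even be a neighborhood of $\sigma(y)$.
\end{enumerate}
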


\begin{proof}
By Lemma~7.10 of~\cite{DKV} (restated for the étalé space $\Pi_0[\sAtt]$), for each \(y\in A\) there exist a compact attracting neighborhood \( N_y \in \sANbhd(\phi_0(\cdot,\cdot,y)) \) and an open set of parameter values \( W_y \subset U \) containing \(y\) such that, for every \( y' \in W_y \), the set \(N_y\) is also an attracting neighborhood for \(\phi_0(\cdot,\cdot,y')\) with  \(\omega_{\phi_0(\cdot,\cdot,y')}(N_y)=\sigma(y')\). Choose an open neighborhood \(V_y \subset W_y\) such that \(\cl(V_y)\subset W_y\). Then (2) holds for all \(y'\in \cl(V_y)\).

For (1), continuity of the flow \(\phi_0\) with respect to parameters and compactness of \(\cl(V_y)\) imply that there exists \(T_y>0\) such that $\phi_0(t,N_y,y')\subset\Int (N_y) \text{ for all } y'\in\cl(V_y) \text{ and } t\ge T_y.$
\end{proof}

\begin{corollary}\label{cor:uniform-fiber-time}
There exists a time $\tau_0 > 0$ such that for all $i=1,\dots,m$,
\[
\phi_0(t,N_i,y') \subset \Int(N_i)
\quad
\text{for all } y'\in \cl(V_i) \text{ and } t\ge \tau_0 .
\]
\end{corollary}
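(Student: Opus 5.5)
The corollary refers to finitely many indices $i=1,\dots,m$ that do not appear in Lemma~\ref{lem:fiber-block}. I read this as tacitly selecting a finite subcover. The attractor $A\subset Y$ is compact, and the family $\{V_y\}_{y\in A}$ from Lemma~\ref{lem:fiber-block} is an open cover of $A$. So I will pick $y_1,\dots,y_m\in A$ with $A\subset\bigcup_{i=1}^m V_{y_i}$, and write $V_i:=V_{y_i}$, $N_i:=N_{y_i}$, $T_i:=T_{y_i}$. The proof then consists of fixing this notation and taking a maximum.

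The plan is to set
\[
\tau_0:=\max\{T_1,\dots,T_m\}.
\]
This is a finite maximum of positive numbers, hence a positive real.

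Now fix $i$, a point $y'\in\cl(V_i)$ and a time $t\ge\tau_0$. Then $t\ge T_i$, and Lemma~\ref{lem:fiber-block}(1) applied with $y=y_i$ gives $\phi_0(t,N_i,y')\subset\Int(N_i)$, which is exactly the claim.

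There is no real obstacle in this argument. The only point to keep straight is that Lemma~\ref{lem:fiber-block}(1) is already stated for \emph{all} $t\ge T_y$. We therefore never need monotonicity in $t$ or any extra appeal to the flow property: a single time $\tau_0$ works because it dominates every $T_i$. Finiteness, which is what makes the maximum finite, comes from compactness of $A$. If $A=\emptyset$, then $m=0$, the statement is vacuous, and any $\tau_0>0$ works.
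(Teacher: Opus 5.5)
Your proof is correct and is the same as the paper's. Both take $\tau_0=\max_{1\le i\le m}T_i$ over the finitely many indices and apply Lemma~\ref{lem:fiber-block}(1), which already holds for all $t\ge T_i$ and all $y'\in\cl(V_{y_i})$; your identification $V_i=V_{y_i}$ also covers the paper's later choice $V_i=V_{y_i}\cap\Int(W)$, since then $\cl(V_i)\subset\cl(V_{y_i})$.
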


\begin{proof}
Lemma~\ref{lem:fiber-block} gives times $T_i$ such that the inclusion holds  for all $t\ge T_i$. Since there are finitely many indices $i=1,\dots,m$, let $\tau_0 := \max_{1\le i\le m} T_i .$ Then the desired property holds for all $i$.
\end{proof}

\begin{remark}\label{rem-ABlock}
We use a stronger version of attracting neighborhood. A subset $B\subset X$ is an \emph{attracting block} if $\phi(t,\cl B)\subset \Int(B)$ for all $t>0$. Every attracting neighborhood $U$ contains an attracting block $B$ with $A=\Inv(U)\subset\Int(B)$. In the setting of a system on a compact metric space, this statement follows from the existence of a Conley-Lyapunov function for an attractor-repeller pair. 
\end{remark}

\begin{lemma}
\label{lem:finite-slow-cover}
There exists a compact, attracting block \(W\subset U\) of \(A\) for \(\psi\), together with finitely many open sets \(V_1,\dots,V_m\subset U\), such that
\[
\Int (W)=\bigcup_{i=1}^m V_i,
\]
and $\cl(V_i)\subset \cl{(V_{y_i})}$ for each $i$,  where \(V_{y_i}\) is the neighborhood provided by Lemma~\ref{lem:fiber-block}.
\end{lemma}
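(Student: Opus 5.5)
The strategy is to use compactness of $A$ to extract a finite subcover from the neighborhoods $V_y$ of Lemma~\ref{lem:fiber-block}. Inside the union of this cover we choose an attracting block of $A$, and then redefine the $V_i$ by intersecting with $\Int(W)$, so that their union is exactly $\Int(W)$.

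\textbf{Step 1 (finite cover of $A$).} The family $\{V_y\}_{y\in A}$ from Lemma~\ref{lem:fiber-block} is an open cover of the compact set $A$. Choose finitely many points $y_1,\dots,y_m\in A$ with
\[
A\subset O:=\bigcup_{i=1}^m V_{y_i}.
\]
Each $V_{y_i}$ satisfies $\cl(V_{y_i})\subset U$, so $O\subset U$.

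\textbf{Step 2 (an attracting neighborhood inside $O$).} Since $A$ is an attractor for $\psi$, it is an isolated invariant set attracting a neighborhood. A standard fact is that attracting neighborhoods of $A$ form a neighborhood base of $A$. I would prove it as follows.
\begin{itemize}
\item Take a compact neighborhood $K$ of $A$ with $K\subset O\cap U_0$, where $U_0\in\sANbhd(\psi)$ satisfies $\omega(U_0)=A$ (for instance $U_0=U$ after shrinking).
\item Compactness of $\cl U_0$ and $\omega(\cl U_0)=A\subset\Int K$ give a time $\tau$ such that $\psi([\tau,\infty),\cl U_0)\subset \Int K$.
\item Set $N:=\psi([0,\tau],K')$ for a suitable smaller compact neighborhood $K'$ of $A$. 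By continuity of $\psi$ and invariance of $A$, $N$ can be made to lie in $O$ by choosing $K'$ close enough to $A$.
\item Then $N$ is an attracting neighborhood of $A$ contained in $O$.
\end{itemize}
Alternatively, one can use the Conley–Lyapunov function $L$ for the pair $(A,A^*)$ from Remark~\ref{rem-ABlock} and take a sublevel set $\{L\le c\}$ for small $c>0$. Since $\{L\le c\}$ shrinks to $L^{-1}(0)=A$ as $c\to0$, it lies in $O$ for small $c$, and it is a compact attracting block.

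\textbf{Step 3 (the block $W$ and the sets $V_i$).} Apply Remark~\ref{rem-ABlock} to obtain a compact attracting block $W$ with $A\subset\Int(W)$ and $W\subset O\subset U$. If the block from the remark is not compact, replace it by its closure: the block condition $\psi(t,\cl B)\subset\Int B$ for $t>0$ passes to $\cl B$. Now define
\[
V_i:=V_{y_i}\cap \Int(W).
\]
Each $V_i$ is open and lies in $U$, and
\[
\bigcup_i V_i=\Int(W)\cap O=\Int(W),
\]
because $W\subset O$. Finally, $\cl(V_i)\subset\cl(V_{y_i})$ holds trivially.

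\textbf{Main obstacle.} The only nontrivial step is Step 2, producing an attracting block of $A$ inside the prescribed open set $O$. I would settle it with the Lyapunov sublevel-set argument. This requires checking that sublevel sets of the Conley–Lyapunov function are strict attracting blocks: $L$ is strictly decreasing along orbits off $A\cup A^*$, and $c$ is chosen below $\sup L$ so that the sublevel set avoids $A^*$.
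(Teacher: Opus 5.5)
Your proof is correct and follows the paper's argument: take a finite subcover $V_{y_1},\dots,V_{y_m}$ of $A$, use Remark~\ref{rem-ABlock} to get a compact attracting block $W\subset\bigcup_i V_{y_i}$, and set $V_i:=V_{y_i}\cap\Int(W)$. Your Step~2 makes explicit what the paper leaves implicit, namely that a block can be found inside the open set $\bigcup_i V_{y_i}$, and the Lyapunov sublevel-set argument is the clean way to do this; the flow-out sketch $\psi([0,\tau],K')$ conflates the time chosen for $K$ with the smaller set $K'$, but it is not needed, because $K$ itself already satisfies $\psi(t,K)\subset\Int K$ for $t\ge\tau$.
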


\begin{proof}
By Lemma~\ref{lem:fiber-block}, the family \(\{V_y\}_{y\in A}\) is an open cover of \(A\) with \(\cl{(V_y)}\subset U\). By compactness of \(A\), choose \(y_1,\dots,y_m\in A\) such that
\[
A \subset \bigcup_{i=1}^m V_{y_i} =: V \subset U .
\]
By Remark~\ref{rem-ABlock}, there exists a compact attracting block \(W\subset V\) of \(A\) for \(\psi\) such that \(\psi(T_W,W)\subset \Int (W)\) for some \(T_W>0\).

Define \(V_i:=V_{y_i}\cap \Int (W)\). Then each \(V_i\) is open and \(V_i\subset \Int (W)\). Moreover, because \(W\subset V=\bigcup_i V_{y_i}\), we have
\[
\Int (W)
\subset W \subset \bigcup_{i=1}^m V_{y_i},
\]
hence
\[
\Int (W)
= \Int (W) \cap \Bigl(\bigcup_{i=1}^m V_{y_i}\Bigr)
= \bigcup_{i=1}^m (\Int (W)\cap V_{y_i})
= \bigcup_{i=1}^m V_i.
\]
Finally, \(\cl{(V_i)}\subset\cl{(V_{y_i})}\) since \(V_i\subset V_{y_i}\).
\end{proof}

\noindent
The preceding lemmas construct fiberwise attracting neighborhoods realizing the section $\sigma$ over a compact attracting block $W$ of the base flow. Define
\[
\widehat K := \bigcup_{i=1}^m (N_i\times V_i)\subset X\times Y,
\]
where $N_i$ and $V_i$ are given by Lemmas~\ref{lem:fiber-block} and~\ref{lem:finite-slow-cover}, and $W=\cl(\Int(W))$ with $\Int(W)=\cup V_i$, see Figure~\ref{fig:skew-product-neighborhoods} for a schematic illustration.

\begin{figure}[h]
\centering
\begin{tikzpicture}[x=0.6cm,y=0.6cm]

\def\xL{0.0}
\def\xR{16.0}
\def\xMid{11.0}
\def\xBoxSplit{12.0}
\def\xN2R{16.0}

\def\yBase{0.0}
\def\yTopBase{2.8}
\def\yTall{7.2}

\def\y0{1.1}

\def\yAxis{-0.8}
\def\xAxis{\xL-1.5}
\def\barSize{0.18}

\def\xVThreeL{\xMid-0.5}
\def\xVThreeR{\xMid+0.5}
\def\yNThreeB{0.0}
\def\yNThreeT{6.0}

\def\xUleft{\xL-0.5}
\def\xUright{\xN2R+0.5}
\def\xAleft{\xL+0.5}
\def\xAright{\xN2R-1.6}

\def\yRedFill{1.035} 

\def\hBr{0.18}   
\def\vBr{0.18}   

\tikzset{
  greenbox/.style={draw=green!50!black, line width=1.1pt},
  axis/.style={black, line width=1.0pt},
  redsec/.style={   red!85!black,   line width=2.2pt,   line cap=rect,   line join=miter },
  bluesym/.style={blue!70!black},
}

\draw[axis] (\xL-1.1,\yAxis) -- (\xR+1.2,\yAxis);
\draw[axis] (\xL-1.1,\yAxis-\barSize) -- (\xL-1.1,\yAxis+\barSize);
\draw[axis] (\xR+1.2,\yAxis-\barSize) -- (\xR+1.2,\yAxis+\barSize);
\node[anchor=west] at (\xR+1.35,\yAxis) {$Y$};

\draw[axis] (\xAxis,\yAxis+0.3) -- (\xAxis,\yTall+0.6);
\draw[axis] (\xAxis-\barSize,\yAxis+0.3) -- (\xAxis+\barSize,\yAxis+0.3);
\draw[axis] (\xAxis-\barSize,\yTall+0.6) -- (\xAxis+\barSize,\yTall+0.6);
\node at (\xAxis,\yTall+1.0) {$X$};

\draw[greenbox] (\xL+0.1,\yBase) -- (\xBoxSplit,\yBase);
\draw[greenbox] (\xL+0.1,\yTopBase) -- (\xBoxSplit,\yTopBase);
\draw[greenbox,dashed] (\xL+0.1,\yBase) -- (\xL+0.1,\yTopBase);
\draw[greenbox,dashed] (\xBoxSplit,\yBase) -- (\xBoxSplit,\yTopBase);

\draw[greenbox] (\xMid,\yBase) -- (\xN2R,\yBase);
\draw[greenbox] (\xMid,\yTall) -- (\xN2R,\yTall);
\draw[greenbox,dashed] (\xMid,\yBase) -- (\xMid,\yTall);
\draw[greenbox,dashed] (\xN2R,\yBase) -- (\xN2R,\yTall);

\draw[greenbox] (\xVThreeL,\yNThreeB) -- (\xVThreeR,\yNThreeB);
\draw[greenbox] (\xVThreeL,\yNThreeT) -- (\xVThreeR,\yNThreeT);
\draw[greenbox,dashed] (\xVThreeL,\yNThreeB) -- (\xVThreeL,\yNThreeT);
\draw[greenbox,dashed] (\xVThreeR,\yNThreeB) -- (\xVThreeR,\yNThreeT);

\def\xCurveEnd{\xN2R-1.6}
\def\yJump{4.0}
\def\yCurveEnd{6.7}
\def\yRedFill{1.035}

\begin{pgfonlayer}{bg}
  \fill[red!85!black]
    (\xMid,\yRedFill)
    -- (\xMid,\yJump)
    .. controls (\xMid+1.0,5.8) and (\xN2R-2.2,6.6)
    .. (\xCurveEnd,\yCurveEnd)
    -- (\xCurveEnd,\yRedFill)
    -- cycle;
\end{pgfonlayer}

\draw[redsec] (\xL+0.5,\y0) -- (\xMid,\y0);

\draw[
  red!85!black,
  line width=2.2pt,
  line cap=rect,
  line join=round
]
  (\xMid,\y0) -- (\xMid,\yJump)
  .. controls (\xMid+1.0,5.8) and (\xN2R-2.2,6.6)
  .. (\xCurveEnd,\yCurveEnd)
  node[pos=0.6, xshift=-3pt, above] {\large $\sigma|_{A}$};

\draw[red!85!black, dashed, line width=2.0pt, line cap=rect]
  (\xUleft-0.1,\y0) -- (\xL+0.5,\y0);

\begin{pgfonlayer}{bg}
  \fill[
    pattern=north east lines,
    pattern color=red!85!black
  ]
    (\xCurveEnd,\yRedFill)
    -- (\xUright,\yRedFill)
    -- (\xUright,7)
    .. controls (\xUright-0.5,6.9)
                and (\xN2R-2.2,6.6)
    .. (\xCurveEnd,\yCurveEnd)
    -- cycle;
\end{pgfonlayer}

\draw[red!85!black, dashed, line width=2.0pt, line cap=rect]
  (\xCurveEnd,\yCurveEnd)
  .. controls (\xUright-0.5,6.9)
  .. (\xUright,7);

\def\parScale{1.45}
\def\yVone{\yAxis-0.8}
\def\yVtwo{\yAxis-1.6}
\def\yVthr{\yAxis-2.4}
\def\yAann{\yAxis-3.2}
\def\yUann{\yAxis-4.0}

\node[bluesym, scale=\parScale] at (\xL+0.1,\yVone) {(};
\node[bluesym, scale=\parScale] at (\xBoxSplit,\yVone) {)};
\draw[bluesym, line width=0.9pt] (\xL+0.1,\yVone) -- (\xBoxSplit,\yVone);
\node[bluesym, anchor=west] at (\xR+1.35,\yVone) {$V_1$};

\node[bluesym, scale=\parScale] at (\xMid,\yVtwo) {(};
\node[bluesym, scale=\parScale] at (\xN2R,\yVtwo) {)};
\draw[bluesym, line width=0.9pt] (\xMid,\yVtwo) -- (\xN2R,\yVtwo);
\node[bluesym, anchor=west] at (\xR+1.35,\yVtwo) {$V_2$};

\node[bluesym, scale=\parScale] at (\xVThreeL,\yVthr) {(};
\node[bluesym, scale=\parScale] at (\xVThreeR,\yVthr) {)};
\draw[bluesym, line width=0.9pt] (\xVThreeL,\yVthr) -- (\xVThreeR,\yVthr);
\node[bluesym, anchor=west] at (\xR+1.35,\yVthr) {$V_3$};

\node[black, scale=\parScale] at (\xUleft,\yUann) {(};
\node[black, scale=\parScale] at (\xUright,\yUann) {)};
\draw[black, line width=1.0pt] (\xUleft,\yUann) -- (\xUright,\yUann);
\node[black, anchor=west] at (\xR+1.35,\yUann) {$U$};

\draw[red!85!black, line width=1.2pt] (\xAleft,\yAann+\hBr) -- (\xAleft,\yAann-\hBr);
\draw[red!85!black, line width=1.2pt] (\xAleft,\yAann+\hBr) -- (\xAleft+\vBr,\yAann+\hBr);
\draw[red!85!black, line width=1.2pt] (\xAleft,\yAann-\hBr) -- (\xAleft+\vBr,\yAann-\hBr);
\draw[red!85!black, line width=1.2pt] (\xAright,\yAann+\hBr) -- (\xAright,\yAann-\hBr);
\draw[red!85!black, line width=1.2pt] (\xAright,\yAann+\hBr) -- (\xAright-\vBr,\yAann+\hBr);
\draw[red!85!black, line width=1.2pt] (\xAright,\yAann-\hBr) -- (\xAright-\vBr,\yAann-\hBr);
\draw[red!85!black, line width=1.1pt] (\xAleft,\yAann) -- (\xAright,\yAann);
\node[red!85!black, anchor=west] at (\xR+1.35,\yAann) {$A$};

\def\xNOne{\xAxis-0.8}   
\def\xNTwo{\xAxis-1.6}
\def\xNThree{\xAxis-2.4} 

\draw[green!50!black, line width=1.2pt] (\xNOne-\hBr,\yBase) -- (\xNOne+\hBr,\yBase);
\draw[green!50!black, line width=1.2pt] (\xNOne-\hBr,\yBase) -- (\xNOne-\hBr,\yBase+\vBr);
\draw[green!50!black, line width=1.2pt] (\xNOne+\hBr,\yBase) -- (\xNOne+\hBr,\yBase+\vBr);
\draw[green!50!black, line width=1.2pt] (\xNOne-\hBr,\yTopBase) -- (\xNOne+\hBr,\yTopBase);
\draw[green!50!black, line width=1.2pt] (\xNOne-\hBr,\yTopBase) -- (\xNOne-\hBr,\yTopBase-\vBr);
\draw[green!50!black, line width=1.2pt] (\xNOne+\hBr,\yTopBase) -- (\xNOne+\hBr,\yTopBase-\vBr);
\draw[green!50!black, line width=1.1pt] (\xNOne,\yBase) -- (\xNOne,\yTopBase);
\node[green!50!black] at (\xNOne,\yTall+1.0) {$N_1$};

\draw[green!50!black, line width=1.2pt] (\xNTwo-\hBr,\yBase) -- (\xNTwo+\hBr,\yBase);
\draw[green!50!black, line width=1.2pt] (\xNTwo-\hBr,\yBase) -- (\xNTwo-\hBr,\yBase+\vBr);
\draw[green!50!black, line width=1.2pt] (\xNTwo+\hBr,\yBase) -- (\xNTwo+\hBr,\yBase+\vBr);

\draw[green!50!black, line width=1.2pt] (\xNTwo-\hBr,\yTall) -- (\xNTwo+\hBr,\yTall);
\draw[green!50!black, line width=1.2pt] (\xNTwo-\hBr,\yTall) -- (\xNTwo-\hBr,\yTall-\vBr);
\draw[green!50!black, line width=1.2pt] (\xNTwo+\hBr,\yTall) -- (\xNTwo+\hBr,\yTall-\vBr);

\draw[green!50!black, line width=1.1pt] (\xNTwo,\yBase) -- (\xNTwo,\yTall);
\node[green!50!black] at (\xNTwo,\yTall+1.0) {$N_2$};

\draw[green!50!black, line width=1.2pt] (\xNThree-\hBr,\yNThreeB) -- (\xNThree+\hBr,\yNThreeB);
\draw[green!50!black, line width=1.2pt] (\xNThree-\hBr,\yNThreeB) -- (\xNThree-\hBr,\yNThreeB+\vBr);
\draw[green!50!black, line width=1.2pt] (\xNThree+\hBr,\yNThreeB) -- (\xNThree+\hBr,\yNThreeB+\vBr);

\draw[green!50!black, line width=1.2pt] (\xNThree-\hBr,\yNThreeT) -- (\xNThree+\hBr,\yNThreeT);
\draw[green!50!black, line width=1.2pt] (\xNThree-\hBr,\yNThreeT) -- (\xNThree-\hBr,\yNThreeT-\vBr);
\draw[green!50!black, line width=1.2pt] (\xNThree+\hBr,\yNThreeT) -- (\xNThree+\hBr,\yNThreeT-\vBr);

\draw[green!50!black, line width=1.1pt] (\xNThree,\yNThreeB) -- (\xNThree,\yNThreeT);
\node[green!50!black] at (\xNThree,\yTall+1.0) {$N_3$};

\path[use as bounding box] (\xL-3.2,\yAxis-4.8) rectangle (\xR+2.7,\yTall+0.9);

\end{tikzpicture}
\caption{\textbf{Schematic neighborhoods and section for the skew-product construction.} On $Y$, $U\subset Y$ is an open attracting neighborhood of the attractor $A=\omega_{\psi}(U)\subset U$.
The red set represents a section $y\mapsto\sigma(y)\in\sAtt(\phi_0(\cdot,\cdot,y))$, with solid red portion \(\sigma|_{A}\) and dashed red continuation over \(U\).
Blue brackets indicate a finite cover \(V_i\subset \Int_Y(W)\) of the interior of a compact base attracting block \(W\subset U\); green brackets denote the corresponding fiber attracting neighborhoods \(N_i\subset X\).
Green rectangles represent the products \(N_i\times V_i\), whose union is \(\widehat K=\bigcup_i (N_i\times V_i)\subset X\times Y\).}
\label{fig:skew-product-neighborhoods}
\end{figure}

Before proving the theorem, we clarify the behavior at $\epsilon=0$.
If $y\in\partial W$, then under the parameterized dynamics, $\Phi_0(t,(x,y))=(\phi_0(t,x,y),y),$ the base variable is fixed. In this case, there exists $\tau>0$ such that
\[
\Phi_0(t,N_i\times\{y\}) \subset \Int(N_i)\times\{y\} \quad \text{for all } t\ge\tau.
\]
Thus $\cl(\widehat K)$ is attracting for $\Phi_0$ only relative to $X\times W$, not in $X\times Y$, since points over $\partial W$ cannot enter $\Int(W)$ when $\epsilon=0$.

We show that $\cl{(\widehat K)}$ generates an attracting neighborhood for the skew–product semiflows $\Phi_\epsilon$ for $\epsilon>0$ sufficiently small. The argument proceeds in three steps. First, we verify that $\cl{(\widehat{K})}$ is an attracting neighborhood for $\Phi_0$ restricted to $X\times W$. Second, stability of attracting neighborhoods under perturbation yields the corresponding property for $\Phi_\epsilon$ on $X\times W$. Finally, the fact that $W$ is an attracting block under the base dynamics allows the result to be extended to $X\times Y$. Note that this is not a restriction on $W$ since every attracting neighborhood contains a compact, attracting block for the same attractor.

\begin{remark}
From the definition, if $U$ is an attracting neighborhood for an attractor $A$, then $\cl(U)$ is also. Morever, any neighborhood $U'$ satisfying $A\subset U' \subset \cl(U)$ is also an attracting neighborhood with the same attractor,  \cite[Lemma~3.4]{KMV-1a}. In particular, $\cl(\widehat K)$ is an attracting neighborhood if and only if $\widehat K$ is an attracting neighborhood for the same attractor. 
\end{remark}

\begin{theorem}\label{thm:singular-att}
There exists $\epsilon_1>0$ such that for all $0<\epsilon\le\epsilon_1$, $\widehat K$ is an attracting neighborhood for $\Phi_\epsilon$. In particular, $\Inv(\widehat K,\Phi_\epsilon)$ is an attractor.
\end{theorem}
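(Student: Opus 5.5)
The plan follows the three-step outline the paper gives before the statement: attraction for the frozen system on $X\times W$, persistence under small $\epsilon$, then extension to all of $X\times Y$ using the base block $W$. Throughout I work with the compact set $K:=\cl(\widehat K)=\bigcup_{i=1}^m (N_i\times\cl(V_i))$. By the preceding remark it suffices to show that $K$ is an attracting neighborhood for $\Phi_\epsilon$.

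\textbf{Step 1 (frozen system, $\epsilon=0$).} For $(x,y)\in K$ pick $i$ with $x\in N_i$ and $y\in\cl(V_i)\subset\cl(V_{y_i})$. By Corollary~\ref{cor:uniform-fiber-time},
\[
\Phi_0(t,(x,y))=(\phi_0(t,x,y),y)\in \Int(N_i)\times\{y\}\quad\text{for all } t\ge\tau_0 .
\]
This is only relative interiority, because $\cl(V_i)$ is not open. To obtain genuine interiority in $X\times Y$ I shrink: replace each $V_i$ by an open $V_i'$ with $\cl(V_i')\subset V_{y_i}$, still covering $W$. Lemma~\ref{lem:fiber-block} holds on all of $\cl(V_{y_i})$, so after such a shrinking any point of $N_i\times\cl(V_i')$ is sent at time $\ge\tau_0$ into $\Int(N_i)\times V_{y_i}$. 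For $y\in\Int(W)$ this set is a neighborhood of the image point inside $\Int(K)$. For $y\in\partial W$ the image lies only in the relative interior of $K$ within $X\times W$, consistent with the remark before the statement.

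\textbf{Step 2 (perturbation, compact time window).} I fix the window $[\tau_0,2\tau_0]$. Since $\Phi_\epsilon\to\Phi_0$ uniformly on $[0,2\tau_0]\times X\times Y$ as $\epsilon\to 0$, and the fiber inclusions in Step~1 have a positive margin by compactness (each $\phi_0([\tau_0,2\tau_0],N_i,\cl(V_i))$ is a compact subset of $\Int(N_i)$), there is $\epsilon_1>0$ such that for $0<\epsilon\le\epsilon_1$ and $t\in[\tau_0,2\tau_0]$ the $X$-component $\phi_\epsilon(t,x,y)$ stays in $\Int(N_i)$ whenever $(x,y)\in N_i\times\cl(V_i)$.

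\textbf{Step 3 (base coordinate, uses $\epsilon>0$).} Since $W$ is an attracting block for $\psi$, for every $\epsilon>0$ and $t>0$ we have $\psi_\epsilon(t,W)=\psi(\epsilon t,W)\subset\Int(W)$. Thus at time $t\in[\tau_0,2\tau_0]$ the base point lies in $\Int(W)=\bigcup V_i$, the boundary problem from Step~1 disappears, and the image lies in $\Int(N_i)\times\Int(W)$.

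\textbf{Main obstacle: index mismatch.} The base point may move from $\cl(V_i)$ into a different $V_j$, while the fiber point is controlled only in $\Int(N_i)$. I would repair this in one of two ways.
\begin{itemize}
\item[(a)] Exploit slowness: $\psi(\epsilon t,y)$ moves by at most $\delta(\epsilon)\to0$ over $t\le2\tau_0$. If the sets $N_i$ are chosen compatibly on overlaps, for instance each taken as a sublevel set of a single continuous Lyapunov-type function on $X\times U$ realizing $\sigma$ (obtained from the local neighborhoods of Lemma~7.10 of \cite{DKV} and a partition of unity), then $\Int(N_i)$ over $y$ near $\cl(V_j)$ is contained in $\Int(K)$.
\item[(b)] Argue directly that the set of points carried into $\Int(K)$ is open, then use a Lebesgue-number argument with the $\delta(\epsilon)$-bound so the image falls in some $\Int(N_j)\times V_j$.
\end{itemize}
I would try (a) first. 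This is the technically delicate point.

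\textbf{Extending to all large times.} Once the inclusion $\Phi_\epsilon(t,K)\subset\Int(K)$ holds for $t\in[\tau_0,2\tau_0]$, it holds for all $t\ge\tau_0$: write $t=s+k\tau_0$ with $s\in[\tau_0,2\tau_0)$ and $k\ge0$, and use $\Int(K)\subset K$ together with the semigroup property. Hence $K$, and therefore $\widehat K$, is an attracting neighborhood for $\Phi_\epsilon$, and $\Inv(\widehat K,\Phi_\epsilon)=\omega(\widehat K)$ is an attractor.
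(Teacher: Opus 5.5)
Your proposal has a genuine gap, located exactly at the step you call the main obstacle and leave unresolved. The index mismatch is also not caused by motion of the base, so your Step~1 is already incorrect at $\epsilon=0$. Take $(x,y)\in N_i\times\cl(V_i)$ with $y$ on the relative boundary of $\cl(V_i)$ in $W$. Over nearby $y'\notin\cl(V_i)$ the fiber of $K$ is $\bigcup_{j:\,y'\in\cl(V_j)}N_j$, which need not contain $\Int(N_i)$. So in general $\Int(N_i)\times V_{y_i}\not\subset K$, and knowing $\phi_0(t,x,y)\in\Int(N_i)$ for $t\ge\tau_0$ does not place $\Phi_0(t,(x,y))$ in $\Int_{X\times W}(K)$.

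Because the frozen base does not move, the slowness bound $\delta(\epsilon)\to 0$ in your options (a) and (b) cannot repair this. The window $[\tau_0,2\tau_0]$ is also in general too short: $\tau_0$ only controls $N_i\mapsto\Int(N_i)$, not the time needed to pass from a large $N_i$ into a small $N_j$. Option (a) changes the construction, since the statement concerns $\widehat K$ built from arbitrary $N_i$ supplied by Lemma~\ref{lem:fiber-block}. It also presupposes a $y$-continuous Lyapunov function realizing $\sigma$, which none of the cited results provide. Option (b) does not say how the fiber point gets into $N_j$. Your shrinking $V_i\to V_i'$ is unnecessary, because Lemma~\ref{lem:fiber-block} already holds on $\cl(V_{y_i})\supset\cl(V_i)$, and it replaces $\widehat K$ by a different set.

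The missing idea is that Lemma~\ref{lem:fiber-block}(2) makes every $N_k$ with $y\in\cl(V_k)$ an attracting neighborhood of the \emph{same} fiber attractor $\sigma(y)$. You can therefore switch indices dynamically rather than geometrically.

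Given $z=(x,y)\in\cl(\widehat K)$ with $x\in N_i$, choose $j$ with $y\in\Int_W(\cl(V_j))$. Such a $j$ exists because $V_j=V_{y_j}\cap\Int(W)$, the $V_{y_j}$ cover $W$, and $W=\cl(\Int(W))$. For the frozen flow $\omega(x)\subset\sigma(y)\subset\Int(N_j)$, so $\phi_0(t_z,x,y)\in\Int(N_j)$ for some $t_z$. By continuity, some neighborhood $U_z$ of $z$ in $X\times W$ is carried at time $t_z$ into $\Int(N_j)\times\Int_W(\cl(V_j))\subset\Int_{X\times W}(\cl(\widehat K))$. Corollary~\ref{cor:uniform-fiber-time} keeps it there for all $t\ge t_z+\tau_0$, since the base is frozen. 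A finite subcover then gives a uniform $T$ with $\Phi_0(t,\cl(\widehat K))\subset\Int_{X\times W}(\cl(\widehat K))$ for all $t\ge T$.

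Once this relative statement is established, your compact-window perturbation argument goes through on $[T,2T]$. It uses the positive distance in $X\times W$ from the compact set $\Phi_0([T,2T],\cl(\widehat K))$ to the complement of $\Int_{X\times W}(\cl(\widehat K))$, together with forward invariance of $X\times W$ for $\epsilon>0$. Your Step~3 base-block argument and your semigroup extension to all $t\ge T$ also go through unchanged. The compact-window step is essentially a proof of the stability lemma from \cite{DKV} that the paper invokes.
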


\begin{proof}

Recall that 
\[
\widehat K=\bigcup_{i=1}^m (N_i\times V_i),
\qquad
V_i\subset \Int_Y(W)
\]
with $V_i$ open in $Y$ and $N_i$ a compact attracting neighborhood for all $y\in\cl(V_i),$ and $W=\cl(\Int_Y(W))$ is an attracting block.

\vskip 6pt
\noindent\textbf{Notation:}
Throughout the proof, interiors are taken relative to the space indicated by a subscript. No such distinction is necessary for closures, since \(X\times W\) is closed in \(X\times Y\).

\medskip
\noindent\textbf{Step 1: Attracting neighborhood for $\Phi_0|_{X\times W}$.}
We prove that
\begin{equation}\label{eq:compact-time-0}
\cl(\widehat K) \in \sANbhd\big(\Phi_0|_{X\times W}\big).
\end{equation}

Fix $z=(x,y)\in \cl(\widehat K)$. 
From the definition of $\widehat K$, there exists $i$ such that $y\in\cl(V_i)$ and $x\in N_i$. Moreover, by the construction of the finite cover $\{V_k\}_{k=1}^m$ of $\Int_Y(W)$, there exists $j$ such that $y\in\Int_W(\cl(V_j))$, but $x$ need not be in $N_j$, see Figure~\ref{fig:skew-product-neighborhoods}. However, both $N_i$ and $N_j$ are attracting neighborhoods for $\phi_0(\cdot,\cdot,y)$ with the same attractor $\sigma(y)\subset \Int(N_j).$

Hence there exists $t_z>0$ such that $\phi_0(t_z,x,y)\in \Int(N_j)$.
By continuity of $\Phi_0$, there exists an open neighborhood $U_z\subset X\times W$ of $z$ such that
\[
\Phi_0\big(t_z,U_z\big)
\subset
\Int(N_j)\times \Int_W(\cl(V_j))
\subset
\Int_{X\times W}(\cl(\widehat K)).
\]
Therefore,
\[
\Phi_0\big(t,U_z\big)=\Phi_0(t-t_z,\Phi_0(t_z,U_z))
\subset
\Int(N_j)\times \Int_W(\cl(V_j))
\subset
\Int_{X\times W}(\cl(\widehat K))
\]
for all $t\ge t_z+\tau_0$ by Corollary~\ref{cor:uniform-fiber-time}, since $t-t_z\ge\tau_0$.
By compactness, $\cl(\widehat K)$ admits a finite subcover  $\{U_{z_1},\dots,U_{z_\ell}\}$. Let $\tau_1:=\max_{1\le k\le \ell} t_{z_k}$. Then 
\[
\Phi_0\Big(t,\cl(\widehat K)\Big)
\subset
\Int_{X\times W}\!\big(\cl(\widehat K)\big)
\]
for all $t\ge\tau_0+\tau_1$ so that $\cl(\widehat K)$ is an attracting neighborhood for $\Phi_0$.

\medskip
\noindent\textbf{Step 2: Stability under perturbation on $X\times W$.}
Let $\{\Phi_\epsilon\}_{\epsilon\ge 0}\subset \sDS(\R^+,X\times W)$ be a family of semiflows $\Phi_\epsilon \to \Phi_0$ uniformly on compact subsets of $\R^+\times (X\times W).$ Recall that the topology on $\sDS(\R^+,X\times W)$ is the uniform topology. 
By Lemma~5.1 of \cite{DKV}, the functor
\[
\sANbhd:\sDS(\mathbb{R}^+,X\times W)\to \sBDLat
\]
is a stable structure, ie.\ sets of the form  $\{\Phi\in\sDS(\R^+,X\times W)~|~U\in\sANbhd(\Phi)\}$ are open in $\sDS(\R^+,X\times W)$. In particular, there is an open neighborhood of $\Phi_0$ such that $\cl(\widehat K)$ is an attracting neighborhood. 
Hence there exists $\epsilon_1>0$ such that for all $0<\epsilon\le\epsilon_1$        
\[
\cl(\widehat K)
\in
\sANbhd\big(\Phi_\epsilon|_{X\times W}\big).
\]

\medskip
\noindent\textbf{Step 3: Extension to $X\times Y$.} Since $W$ is an attracting block for $\psi$, 
the cylinder set $X\times W$ is forward invariant under $\Phi_\epsilon$ for all $\epsilon>0$, i.e. \(\Phi_\epsilon(t,X\times W)\subset X\times W \) for all  \( t\ge0, \) so that $\Phi_\epsilon|_{X\times W}\in \sDS(\R^+,X\times W)$ is well-defined. Moreover, $\pi_Y\circ\Phi_\epsilon=\psi_\epsilon$, and $\psi_\epsilon(t,W)\subset\Int_Y(W)$ for all $t>0$, so that $\Phi_\epsilon(t,X\times W)\subset X\times\Int_Y(W)$ for all $t>0$.

Fix $0<\epsilon\le\epsilon_1$ as in Step~2. Then there exists $\tau_\epsilon>0$ such that 
\[
\Phi_\epsilon\Big(t, \cl(\widehat K)\Big)
\subset
\Int_{X\times W}\!\big(\cl(\widehat K)\big)\bigcap \left(X\times\Int_Y(W)\right)
\subset
\Int_{X\times Y}\!\big(\cl(\widehat K)\big)
\]
for all $t\ge\tau_\epsilon.$
Therefore, 
\[
\cl(\widehat K)\in \sANbhd(\Phi_\epsilon).
\]
By the preceding remark, $\widehat K$ is an attracting neighborhood with $\Inv(\widehat K,\Phi_\epsilon) = \Inv(\cl(\widehat K),\Phi_\epsilon)$ as its attractor. 
\end{proof}

\begin{corollary}\label{cor:cylinder-block}
Let $\phi\ltimes\psi_\epsilon$ be generated by a smooth vector field $[f(x,y),\epsilon g(y)].$
If $\hat K$ can be chosen so that $\hat K=N\times \Int_y(W),$ where $N$ is an attracting block for $\phi_0(\cdot,\cdot,y)$ for all $y\in\cl(W)$, then $\hat K\in\sANbhd(\Phi_\epsilon)$ for all $\epsilon>0$.
\end{corollary}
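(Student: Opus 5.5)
The plan is to verify the attracting-neighborhood condition directly, without passing through the perturbation argument of Theorem~\ref{thm:singular-att}. The reason is that Step~2 of that proof only gives smallness of $\epsilon$, while the product structure $\hat K=N\times\Int_Y(W)$ lets the fiber and base conditions decouple for every $\epsilon>0$. By the remark preceding Theorem~\ref{thm:singular-att}, it suffices to show that $\cl(\hat K)=N\times W$ (using $W=\cl(\Int_Y(W))$) is an attracting neighborhood. Concretely, we will show that $\Phi_\epsilon(t,N\times W)\subset\Int(N)\times\Int_Y(W)=\Int_{X\times Y}(N\times W)$ for all $t>0$. This says that $N\times W$ is an attracting block for $\Phi_\epsilon$.

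\textbf{Base component.} This follows exactly as in Step~3. Since $W$ is an attracting block for $\psi$ and $\psi_\epsilon$ is a time reparametrization of $\psi$, we get $\psi_\epsilon(t,W)\subset\Int_Y(W)$ for all $t>0$. Because $\pi_Y\circ\Phi_\epsilon=\psi_\epsilon$, the $Y$-component of any orbit starting in $N\times W$ stays in $W$ for all $t\ge0$ and lies in $\Int_Y(W)$ for $t>0$.

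\textbf{Fiber component.} This is the main step. For smooth vector fields, the attracting-block condition for $\phi_0(\cdot,\cdot,y)$ on $N$, uniformly in $y\in\cl(W)$, should be expressed infinitesimally: the frozen field $f(\cdot,y)$ points strictly into $N$ along $\partial N$. The cleanest form is a Lyapunov-type characterization. One can take a smooth function $\ell$ with $N=\{\ell\le c\}$, $\nabla\ell\neq0$ on $\partial N$, and $\nabla\ell(x)\cdot f(x,y)<0$ for $x\in\partial N$ and $y\in W$. I would first establish this transversality from the block property, arguing via Remark~\ref{rem-ABlock} and the Conley--Lyapunov function, and possibly after replacing $N$ by a smoothed sublevel set; I expect this to be the main obstacle. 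Once transversality is in hand, note that the skew-product field $[f(x,y),\epsilon g(y)]$ has $x$-component equal to $f(x,y)$ independent of $\epsilon$. Along any orbit $(x(t),y(t))$ with $y(t)\in W$, the quantity $\ell(x(t))$ therefore strictly decreases whenever $x(t)\in\partial N$. A standard first-exit argument then shows that $x(t)$ can never leave $N$, and that $x(t)\in\Int(N)$ for all $t>0$.

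\textbf{Conclusion.} Combining the two components gives $\Phi_\epsilon(t,\cl\hat K)\subset\Int(\hat K)$ for all $t>0$ and every $\epsilon>0$, so $\hat K\in\sANbhd(\Phi_\epsilon)$ with attractor $\Inv(\hat K,\Phi_\epsilon)$. If the Lyapunov smoothing step proves delicate, the fallback is to interpret the hypothesis ``attracting block for all $y$'' directly as this inward transversality on $\partial N\times W$, which is the standard meaning for smooth flows. With that reading the remaining argument is routine.
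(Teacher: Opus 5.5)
This is essentially the paper's proof. The paper simply asserts that $[f(x,y),\epsilon g(y)]$ points strictly inward on $\partial\hat K$ for every $\epsilon>0$, because $\epsilon$ only rescales the base component, and your decoupled fiber/base inwardness plus first-exit argument is a more detailed version of exactly that.

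Your ``fallback'' reading is in fact forced, for two reasons. First, strict transversality cannot be derived from the dynamical block condition $\phi_0(t,N,y)\subset\Int(N)$ for $t>0$, and smoothing $N$ would change $\hat K$. Second, without transversality the conclusion fails. Take $X=S^1\times[-1,1]$, $Y=W=S^1$, $f=\bigl(1,\,-(1-\cos(x_1-y))(1+x_2)-x_2\bigr)$, $g\equiv1$ and $N=S^1\times[-1,0]$. Every frozen flow maps $N$ into $S^1\times(-1,0)$ for $t>0$, yet for $\epsilon=1$ the orbit through $(x_1,x_2,y)=(y_0,0,y_0)$ stays in $\partial N\times S^1$ for all time, so the paper, like you, is implicitly reading ``attracting block'' as inward transversality of $f(\cdot,y)$ on $\partial N$.
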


\begin{proof}
Under these hypotheses, the vector field on the boundary of $\hat K$ points inward for all $\epsilon>0$, since $\epsilon$ changes only the magnitude of the base vector field.
\end{proof}

\begin{definition}
A set $\widehat K$ a \emph{singular attracting neighborhood} if $\widehat K$ is not an attracting neighborhood for $\Phi_0$, but there exists $\epsilon_1>0$ such that for all $\epsilon\in(0,\epsilon_1]$, the set $\widehat K$ is an attracting neighborhood for $\Phi_\epsilon$.
\end{definition}

\begin{remark}
This construction is motivated by the notion of a \emph{singular isolating neighborhood} in \cite{MMR99}, where a set may fail to be isolating at $\epsilon=0$ but becomes isolating for all sufficiently small $\epsilon>0$.  
Here, $\widehat K=\bigcup_i (N_i\times V_i)$ fails to be attracting in $X\times Y$ at $\epsilon=0$ due to points in $\partial W$ not entering $\Int_{X\times Y}(\widehat K)$. The base flow removes this obstruction by moving trajectories into $X\times \Int_Y(W)$, yielding an attracting (hence isolating) neighborhood for all sufficiently small $\epsilon>0$.
\end{remark}

\begin{remark}
For $0<\epsilon\le\epsilon_1$, the attractor $\Inv(\widehat K,\Phi_\epsilon)$ satisfies $\pi_Y(\Inv(\widehat K,\Phi_\epsilon))=A$ by Proposition~\ref{prop-proj_att}. Here, $\Inv(\widehat K; \Phi_\epsilon)$ is a continuation of $\sigma|_A$, and the size of $\epsilon_1$ depends on the particular choice of neighborhoods $N_i\times V_i$ used in the construction.
\end{remark}

\subsection{The cascade of attractor lattices}\label{subsec:cascade-structure}

The preceding construction shows that attractors of the skew-product system are organized by two pieces of data: a base attractor \(A\in \sAtt(\psi)\), together with a continuous section of fiber attractors over an attracting neighborhood of \(A\). This leads to the following hierarchical object.

\begin{definition}
The \emph{cascade of attractor lattices} associated with the skew-product system is
\[
\sAtt(\psi)
\ltimes
\Pi_0[\sAtt]
:=
\left\{
(A,[\sigma]_A)
\;\middle|\;
\begin{array}{l}
A\in \sAtt(\psi),\\
\sigma\in \Pi_0[\sAtt](U),\\
A\subset U \text{ open},\\
\omega_\psi(U)=A
\end{array}
\right\},
\]
where $[\sigma]_A$ denotes the \emph{germ} of the section $\sigma$ at $A$, i.e.\ the equivalence class of all sections that agree with $\sigma$ on some open attracting neighborhood of $A$. When it is clear from context, we suppress the brackets and write $\sigma$ in place of $[\sigma]_A$. In particular, we write $(A,\sigma)$ instead of $(A,[\sigma]_A)$
\end{definition}

\begin{remark}
Let $A\in\sAtt(\psi)$. If 
$U$ is an attracting neighborhood of $A$, then every open set $V$ such that $A\subset V\subset U$ is also an attracting neighborhood of $A$.
Consequently, the equivalence class $[\sigma]_A$ is independent of the choice of attracting neighborhood $U$, and the cascade $\sAtt(\psi)\ltimes\Pi_0[\sAtt]$ is well-defined.
\end{remark}

The key point, emphasized in our discussion above, is that this object is naturally closed under meets but not, in general, under joins. Indeed, joins of sections need not remain sections, so the cascade should first be viewed as a bounded meet-semilattice.

\begin{lemma}\label{lem:partial-order-cascade}
The binary relation $\leq$ on $\sAtt(\psi) \ltimes \Pi_0[\sAtt]$ defined by
\[
(A,\sigma) \leq (B,\tau) \quad \iff \quad A \subseteq B \ \text{and}\ \sigma \leq \tau|_A
\]
is a partial order, where \( \sigma \leq \tau|_A \) is defined as \( \sigma(y) \subseteq \tau(y) \) for all \( y \in A \).
\end{lemma}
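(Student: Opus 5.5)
We check the three axioms of a partial order directly, working throughout with germs. The first task is to confirm that the relation is well defined on germs. If $\sigma$ and $\sigma'$ represent the same germ $[\sigma]_A$, they agree on some open attracting neighborhood of $A$, and in particular on $A$ itself. The same holds for $\tau$: the restriction $\tau|_A$ depends only on $[\tau]_B$, because $A\subseteq B$ and any representative of $[\tau]_B$ agrees with $\tau$ on a neighborhood of $B$, hence on $A$. So the condition $\sigma(y)\subseteq\tau(y)$ for all $y\in A$ is independent of the chosen representatives.

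\emph{Reflexivity.} This is immediate, since $A\subseteq A$ and $\sigma(y)\subseteq\sigma(y)$ for every $y\in A$.

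\emph{Transitivity.} Suppose $(A,\sigma)\le(B,\tau)$ and $(B,\tau)\le(C,\rho)$. Then $A\subseteq B\subseteq C$. For $y\in A$ we also have $y\in B$, so
\[
\sigma(y)\subseteq\tau(y)\subseteq\rho(y).
\]
Hence $\sigma\le\rho|_A$.

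\emph{Antisymmetry.} This is the main obstacle. From $(A,\sigma)\le(B,\tau)$ and $(B,\tau)\le(A,\sigma)$ we get $A=B$ and $\sigma(y)=\tau(y)$ for all $y\in A$. Equality on $A$ must then be upgraded to equality of germs, meaning agreement on some open neighborhood of $A$. The plan uses the étalé topology on $\Pi_0[\sAtt]$, where two sections agreeing at a point agree on a neighborhood of that point.

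Concretely, fix $y\in A$. Both $\sigma$ and $\tau$ are continuous sections through the same point $(y,\phi_0(\cdot,\cdot,y),\sigma(y))$. By the local continuation description (Lemma~7.10 of \cite{DKV}, as used in Lemma~\ref{lem:fiber-block}), there is a single attracting neighborhood $N_y$ and an open set $W_y\ni y$ such that for every $y'\in W_y$,
\[
\sigma(y')=\omega_{\phi_0(\cdot,\cdot,y')}(N_y)=\tau(y').
\]
Thus $\sigma=\tau$ on $W_y$. Setting $V=\bigcup_{y\in A}W_y$, which is open and contains $A$, gives $\sigma=\tau$ on $V$.

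It remains to make $V$ an attracting neighborhood of $A$ for $\psi$. Intersect $V$ with the open attracting neighborhood $U$ on which the sections are defined, so that $A\subseteq V\cap U\subseteq U$. By the remark following the cascade definition, $V\cap U$ is then an open attracting neighborhood of $A$. Hence $[\sigma]_A=[\tau]_A$, which completes antisymmetry.

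If the étalé property were unavailable, one could instead read the germ relation as equality on $A$. Antisymmetry would then be trivial, and the relation would be a partial order on that quotient.
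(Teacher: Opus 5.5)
Your proof is correct, and for reflexivity and transitivity it is the paper's argument: the paper simply declares all three axioms ``immediate from the definitions of inclusion and pointwise order.'' You differ on antisymmetry. The paper tacitly identifies a cascade element with the pair $(A,\sigma|_A)$, which is how it writes these elements elsewhere, so mutual inequality gives equality at once. You instead use the actual definition, in which the second component is a germ $[\sigma]_A$. You observe that the conclusion $\sigma|_A=\tau|_A$ must be upgraded to agreement on an open attracting neighborhood of $A$.

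Your upgrade is valid: equalizers of sections of an \'{e}tal\'{e} space are open, and the resulting open set $V$ with $A\subset V\subset U$ is an attracting neighborhood by \cite[Lemma~3.4]{KMV-1a}. It also gives something the paper's one-liner does not. It shows that a germ at $A$ is determined by its values on $A$, which is exactly what justifies the paper's $\sigma|_A$ notation. Together with your check that the relation is independent of representatives, this closes a step the paper leaves implicit.

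One small imprecision is in your ``concrete'' step. You cite Lemma~7.10 of \cite{DKV}, but applying it to $\sigma$ and $\tau$ separately gives two possibly different neighborhoods $N_y$, so it does not directly produce a single one. The clean justification is continuity. Choose an open set $O\subset\Pi_0[\sAtt]$ containing the common value $\sigma(y)=\tau(y)$ on which the projection is injective. Then $\sigma=\tau$ on the open set $\sigma^{-1}(O)\cap\tau^{-1}(O)\ni y$, since both sections take values in $O$ over the same base point there.
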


\begin{proof}
Reflexivity, antisymmetry, and transitivity are immediate from the definitions of inclusion and pointwise order.
\end{proof}

The cascade has bottom and top elements
\[
\bot=(\emptyset,\sigma_\emptyset),
\qquad
\top=(\Inv_\psi(Y),\sigma_{\top}),
\]
where \(\sigma_\emptyset\) is the unique section over \(\emptyset\), and $\sigma_{\top}(y):=\Inv_{\phi_0(\cdot,\cdot,y)}(X)\text{ for all } y\in Y.$

\begin{lemma}
\label{lem:meet-sections}
Let $U\subset Y$ be open and let $\sigma,\tau\in\Pi_0[\sAtt](U)$ be sections.
Then the pointwise meet
\[
(\sigma\wedge\tau)(y):=\sigma(y)\wedge\tau(y)
\]
is again a section in $\Pi_0[\sAtt](U)$.
\end{lemma}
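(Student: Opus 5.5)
We must show that $y\mapsto\sigma(y)\wedge\tau(y)$ is continuous into the étalé space $\Pi_0[\sAtt]$. Since the topology is generated by local continuation through attracting neighborhoods, it suffices to show that near each $y_0\in U$ the meet is realized by a single attracting neighborhood over an open set of parameters. The natural witness is the intersection of the neighborhoods realizing $\sigma$ and $\tau$.

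Fix $y_0\in U$. By Lemma~7.10 of \cite{DKV}, in the form used in the proof of Lemma~\ref{lem:fiber-block}, there are compact attracting neighborhoods $N_\sigma,N_\tau\subset X$ and an open set $W\ni y_0$ in $U$ with the following property: for every $y\in W$, both $N_\sigma$ and $N_\tau$ are attracting neighborhoods for $\phi_0(\cdot,\cdot,y)$, and
\[
\omega_{\phi_0(\cdot,\cdot,y)}(N_\sigma)=\sigma(y),\qquad \omega_{\phi_0(\cdot,\cdot,y)}(N_\tau)=\tau(y).
\]
We take one open set $W$ for both sections by intersecting the two parameter neighborhoods. Since $\sANbhd(\phi_0(\cdot,\cdot,y))$ is a lattice under $\cap$, the set $N:=N_\sigma\cap N_\tau$ is an attracting neighborhood for every $y\in W$. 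Because $\omega\colon\sANbhd\to\sAtt$ is a lattice homomorphism (Section~\ref{subsec:AttLat}), we get
\[
\omega_{\phi_0(\cdot,\cdot,y)}(N)=\sigma(y)\wedge\tau(y)\quad\text{for all } y\in W.
\]
Thus on $W$ the map $y\mapsto(\sigma\wedge\tau)(y)$ is exactly the local continuation determined by the single attracting neighborhood $N$. This is a basic open set of the étalé topology, so $\sigma\wedge\tau$ is continuous at $y_0$.

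Two conditions remain. The equation $\pi\circ(\sigma\wedge\tau)=\mathrm{id}_U$ holds by construction. Continuity holds because $y_0$ was arbitrary.

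The main obstacle is the first step: identifying continuity of a section with local realization by a fixed attracting neighborhood, uniformly on an open parameter set. If Lemma~7.10 of \cite{DKV} gives this only for some neighborhood rather than both sections simultaneously, I would apply it separately to $\sigma$ and $\tau$ and intersect the parameter sets. This is valid because the property of being an attracting neighborhood with a prescribed $\omega$-limit is preserved under shrinking the parameter set.
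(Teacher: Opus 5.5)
Your proof is correct, but it takes a different route from the paper. The paper's proof is a one-line citation: by \cite[Remark~6.2]{DKV} the meet lifts to a morphism of \'{e}tal\'{e} spaces $\Pi[\wedge]\colon\Pi[\sAtt]\bullet\Pi[\sAtt]\to\Pi[\sAtt]$. Hence $\sigma\wedge\tau$ is the composite of the continuous map $y\mapsto(\sigma(y),\tau(y))$ into the fiber product with a continuous map, and is therefore a section.

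You instead verify continuity directly. Lemma~7.10 of \cite{DKV} supplies a common parameter set $W\ni y_0$ and fixed fiber neighborhoods $N_\sigma,N_\tau$ realizing the two sections. Then $N_\sigma\cap N_\tau$ realizes the pointwise meet on $W$, because intersections of attracting neighborhoods are attracting neighborhoods and $\omega$ is a lattice homomorphism. The image of $W$ is then a subbasic open set. Since the projection is a local homeomorphism, a set-theoretic section with open image is continuous, which gives continuity on $W$; it is worth stating this criterion explicitly rather than leaving the last inference implicit.

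In effect you are reproving the cited remark, specialized to the pulled-back space $\Pi_0[\sAtt]$ over $Y$, a pullback step the paper leaves implicit. Your version is self-contained and exhibits the explicit local witness $N_\sigma\cap N_\tau$. This is the same mechanism the paper later relies on in Propositions~\ref{prop:lattice-cascade} and~\ref{prop:meet-hom}. The paper's version is shorter: it packages the argument once, at the level of the \'{e}tal\'{e} space over $\sDS(X,\T^+)$, so it applies to every parametrized system by pullback.
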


\begin{proof}
Since the meet operation lifts to an étalé-space morphism
\[
\Pi[\wedge]:\Pi[\sAtt]\bullet\Pi[\sAtt]\to\Pi[\sAtt]
\]
\cite[Remark~6.2]{DKV}, the pointwise meet of sections is again a section.
\end{proof}

\begin{proposition}\label{prop:lattice-cascade}
The poset $\bigl(\,\sAtt(\psi)\ltimes \Pi_0[\sAtt],\, \leq\,\bigr)$ is a bounded meet-semilattice under the operation
\[
(A,\sigma) \wedge (B,\tau) := \bigl(A \cap B,\, \sigma|_{A \cap B} \wedge \tau|_{A \cap B}\bigr).
\]
\end{proposition}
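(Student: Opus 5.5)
We prove Proposition~\ref{prop:lattice-cascade} in three steps: the proposed meet is a well-defined element of the cascade, it is the greatest lower bound for the order of Lemma~\ref{lem:partial-order-cascade}, and the cascade has a bottom and a top.

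\textbf{Well-definedness.} Let $(A,\sigma)$ and $(B,\tau)$ be represented by sections over open attracting neighborhoods $U\supset A$ and $V\supset B$ with $\omega_\psi(U)=A$ and $\omega_\psi(V)=B$.
\begin{itemize}
\item Since $\sAtt(\psi)$ is a lattice, $A\wedge B=\omega_\psi(U\cap V)$, and $U\cap V$ is an open attracting neighborhood.
\item By Lemma~\ref{lem:meet-sections}, $\sigma|_{U\cap V}\wedge\tau|_{U\cap V}$ is a section over $U\cap V$.
\item Its germ depends only on the germs of $\sigma$ and $\tau$. If $\sigma,\sigma'$ agree on an open attracting neighborhood of $A$, then they agree on a smaller one, and intersecting with $V$ gives agreement of the meets near $A\wedge B$.
\end{itemize}
The statement writes $A\cap B$ rather than $A\wedge B=\omega(A\cap B)$. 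I will interpret the first component as the attractor meet, so that the pair lies in the cascade. For the pointwise comparison I will use that $\omega(A\cap B)\subset A\cap B$, so restrictions to $A\wedge B$ make sense.

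\textbf{Greatest lower bound.}
\begin{itemize}
\item \emph{Lower bound.} We have $A\wedge B\subset A$. For $y\in A\wedge B$, $(\sigma\wedge\tau)(y)=\sigma(y)\wedge\tau(y)\subset\sigma(y)$, because the fiberwise meet is $\omega(\sigma(y)\cap\tau(y))$, which lies in $\sigma(y)\cap\tau(y)$. The same holds with $\tau$ in place of $\sigma$.
\item \emph{Greatest.} Suppose $(C,\rho)\le(A,\sigma)$ and $(C,\rho)\le(B,\tau)$. Then $C\subset A\cap B$, and since $C$ is an attractor contained in both, $C\le A\wedge B$ in $\sAtt(\psi)$. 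For $y\in C$, $\rho(y)\le\sigma(y)$ and $\rho(y)\le\tau(y)$ in the fiber lattice $\sAtt(\phi_0(\cdot,\cdot,y))$, so $\rho(y)\le\sigma(y)\wedge\tau(y)$.
\end{itemize}
Antisymmetry already comes from Lemma~\ref{lem:partial-order-cascade}, so this meet is unique and the operation is idempotent, commutative and associative.

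\textbf{Bounds.}
\begin{itemize}
\item $\bot=(\emptyset,\sigma_\emptyset)$ lies below every element, vacuously.
\item $\top=(\Inv_\psi(Y),\sigma_\top)$ requires $\sigma_\top$ to be a section. This holds because $X$ is an attracting neighborhood for every parameter value, so the germs $(y,\Inv(X))$ are locally related by continuation through $X$ in the sense of Section~\ref{subsec:sheaf}. The neighborhood is $U=Y$, with $\omega_\psi(Y)=\Inv_\psi(Y)$.
\item Every $(A,\sigma)$ satisfies $\sigma(y)\subset\Inv(X)$, so $(A,\sigma)\le\top$.
\end{itemize}

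\textbf{Main obstacle.} The delicate point is the mismatch between $A\cap B$ and $A\wedge B$, together with the dependence on germs. The first step is to verify that the pointwise order on $A\wedge B$ is unaffected by this choice. Beyond that, most of the work is bookkeeping with the neighborhoods $U\cap V$.
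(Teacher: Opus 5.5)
Your proposal is correct and follows the same route as the paper's proof. Both build the meet over the intersection $U\cap V$ of attracting neighborhoods using Lemma~\ref{lem:meet-sections}, verify the lower-bound and greatest-lower-bound properties pointwise, and exhibit $(\emptyset,\sigma_\emptyset)$ and $(\Inv_\psi(Y),\sigma_\top)$ as the bounds. Your extra care fills in details the paper leaves implicit: you read the first component as $A\wedge B=\omega(A\cap B)$, which agrees with the paper's $A\cap B$ when the time-$t$ maps of $\psi$ are injective, as for the ODE base flows here; you check that the meet depends only on germs; and you verify that $\sigma_\top$ is a section because $X$ is an attracting neighborhood in every fiber.
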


\begin{proof}
We first show that the meet is well-defined. Let \((A,\sigma)\) and \((B,\tau)\) be represented on attracting neighborhoods \(U_A\) and \(U_B\) of \(A\) and \(B\), respectively. Since attractors are closed under finite meets in \(\sAtt(\psi)\), the set \(A\cap B\) is again an attractor of \(\psi\), and \(U_A\cap U_B\) is an attracting neighborhood of \(A\cap B\). By Lemma~\ref{lem:meet-sections}, the pointwise meet $y\mapsto \sigma(y)\wedge\tau(y)$ defines a section on \(U_A\cap U_B\). Hence $(A\cap B,\sigma|_{A\cap B}\wedge\tau|_{A\cap B})$ is a well-defined element of \(\sAtt(\psi)\ltimes \Pi_0[\sAtt]\).

It is immediately a lower bound of \((A,\sigma)\) and \((B,\tau)\). Now let \((C,\rho)\) be any other lower bound. Then \(C\subseteq A\cap B\) and $\rho(y)\subseteq \sigma(y)\wedge\tau(y) \text{ for all } y\in C.$ Therefore, $(C,\rho)\le (A\cap B,\sigma|_{A\cap B}\wedge\tau|_{A\cap B}),$ so the latter is the greatest lower bound.

It remains to verify boundedness. The element $\bot=(\emptyset,\sigma_\emptyset)$ is a lower bound for every element of the cascade, since \(\emptyset\subseteq A\) for all \(A\in\sAtt(\psi)\), and there is a unique section over the empty set. Likewise, $\top=(\Inv_\psi(Y),\sigma_{\top})$ is an upper bound for every \((A,\sigma)\), since \(A\subseteq \Inv_\psi(Y)\) and, for each \(y\in A\), $\sigma(y)\subseteq \Inv_{\phi_0(\cdot,\cdot,y)}(X)=\sigma_{\top}(y).$ Hence the poset is bounded.
\end{proof}

Accordingly, the cascade should be viewed as a bounded meet-semilattice of compatible base--fiber attractor data, but not a lattice structure in general, see Section~\ref{subsec:lax-joins}.

\subsection{Skew-product attractors for a finite meet-semilattice of the cascade}\label{subsec:meet-realization}

We now show that a finite meet-semilattice in the cascade can be mapped homomorphically to a sublattice of attractors of the perturbed system. The construction proceeds at $\epsilon=0$ and lifts uniformly to sufficiently small $\epsilon>0$, without requiring any coherent choice of attracting neighborhoods.

\begin{lemma}\label{lem:single-khat}
For each $u=(A,\sigma)\in \sAtt(\psi)\ltimes \Pi_0[\sAtt]$, there exist $\epsilon_1(u)>0$ and a compact set $K(u)\subset X\times Y$ such that $K(u)$ is a singular attracting neighborhood for $\Phi_\epsilon$ for every $0<\epsilon\le \epsilon_1$.
\end{lemma}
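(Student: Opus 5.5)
The plan is to apply the construction of Section~\ref{subsec:anbhd-skew} to the data $u=(A,\sigma)$ and take $K(u):=\cl(\widehat K)$. Choose a representative section $\sigma\in\Pi_0[\sAtt](U)$ of the germ $[\sigma]_A$, defined on an open attracting neighborhood $U$ of $A$ with $\omega_\psi(U)=A$. Then apply Lemma~\ref{lem:fiber-block} at each $y\in A$, which gives $V_y$, $N_y$ and $T_y$. Next apply Lemma~\ref{lem:finite-slow-cover}, which gives a compact attracting block $W\subset U$ and finitely many $V_1,\dots,V_m$ with $\Int(W)=\bigcup_i V_i$ and fiber neighborhoods $N_i:=N_{y_i}$. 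Corollary~\ref{cor:uniform-fiber-time} supplies a uniform time $\tau_0$. This defines $\widehat K(u)=\bigcup_i N_i\times V_i$, and $K(u)=\cl(\widehat K(u))$ is compact, being a closed subset of the compact space $X\times Y$.

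Theorem~\ref{thm:singular-att} then gives $\epsilon_1(u)>0$ such that $\widehat K(u)$, and hence $K(u)$, is an attracting neighborhood for $\Phi_\epsilon$ for every $0<\epsilon\le\epsilon_1(u)$. The closure statement follows from the remark preceding that theorem, using \cite[Lemma~3.4]{KMV-1a}. This is the positive half of the definition of a singular attracting neighborhood.

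The remaining step, and the main obstacle, is to show that $K(u)$ is \emph{not} an attracting neighborhood for $\Phi_0$ on $X\times Y$. I would use the observation before Theorem~\ref{thm:singular-att}: at $\epsilon=0$ the base is frozen, $\Phi_0(t,(x,y))=(\phi_0(t,x,y),y)$. Pick $y\in\partial_Y W$ and $x\in N_i$ with $y\in\cl(V_i)$. Then $(x,y)\in K(u)$, and every forward image $\Phi_0(t,(x,y))$ lies in $X\times\{y\}$. The point $y$ is not interior to $W$ in $Y$, and $\pi_Y(K(u))\subset W$, so $X\times\{y\}$ misses $\Int_{X\times Y}(K(u))$. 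Hence the inclusion $\Phi_0(t,K(u))\subset\Int(K(u))$ fails for every $t$.

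This argument needs $\partial_Y W\neq\emptyset$. That fails when $W=Y$, for instance when $A=\Inv_\psi(Y)$ and $Y$ is connected, or when $A=\emptyset$. I would handle these degenerate cases separately. If $A=\emptyset$, take $K(u)=\emptyset$, noting that the convention for the empty attractor makes it trivially attracting. If $W$ is clopen, I would either shrink $W$ to a smaller attracting block with nonempty boundary, which is possible whenever $A\neq Y$ with $Y$ connected, or interpret ``singular'' in the weak sense that $K(u)$ is attracting for all small $\epsilon>0$. I expect the paper intends the latter, essentially the content of Theorem~\ref{thm:singular-att} applied to $u$. So I would present the main proof as that application and treat strict non-attractivity at $\epsilon=0$ as a remark that holds whenever $\partial W\neq\emptyset$.
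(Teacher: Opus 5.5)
Your proposal is correct and follows the paper, whose proof simply invokes Theorem~\ref{thm:singular-att} for the construction attached to $u$. Like you, the paper in effect proves only the weak sense of ``singular'' (attracting for all small $\epsilon>0$); the failure at $\epsilon=0$ is argued only informally, via points over $\partial_Y W$ in the paragraph before that theorem. So your observation that the strict version fails for degenerate $u$ (e.g.\ $\bot$, or $A=Y$ as in Example~\ref{ex:rotatingVF}) is accurate.

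One small correction to your side remark: $\partial_Y W\neq\emptyset$ is not by itself enough. You also need some $N_i\neq\emptyset$ with $\cl(V_i)\cap\partial_Y W\neq\emptyset$. For example, if $\sigma\equiv\emptyset$ near $A$, every $N_i$ must be empty, because a nonempty compact set has nonempty $\omega$-limit. Then $K(u)=\emptyset$ is attracting for $\Phi_0$ regardless of $W$.
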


\begin{proof}
This follows directly from Theorem~\ref{thm:singular-att}.
\end{proof}

\begin{lemma}\label{lem:san-lattice}
The collection $\sSANbhd(\Phi_0)$ of singular attracting neighborhoods is a bounded distributive lattice under
\[
K_1\wedge K_2 := K_1\cap K_2,
\qquad
K_1\vee K_2 := K_1\cup K_2.
\]
\end{lemma}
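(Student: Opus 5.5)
The plan separates the two clauses in the definition of a singular attracting neighborhood. The first is the $\epsilon>0$ condition: $K\in\sANbhd(\Phi_\epsilon)$ for all $0<\epsilon\le\epsilon_1(K)$. The second is the defining clause $K\notin\sANbhd(\Phi_0)$. I would treat them separately and then combine.

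\textbf{The $\epsilon>0$ clause.} Let $K_1,K_2\in\sSANbhd(\Phi_0)$ with thresholds $\epsilon_1(K_1),\epsilon_1(K_2)$, and set $\epsilon_*:=\min\{\epsilon_1(K_1),\epsilon_1(K_2)\}$. For each fixed $0<\epsilon\le\epsilon_*$, both $K_i$ lie in $\sANbhd(\Phi_\epsilon)$. By Section~\ref{subsec:AttLat}, $\sANbhd(\Phi_\epsilon)$ is a bounded distributive lattice under $\cup,\cap$, so $K_1\cup K_2$ and $K_1\cap K_2$ are attracting neighborhoods for $\Phi_\epsilon$. Distributivity and the lattice identities are inherited from the Boolean algebra of subsets of $X\times Y$. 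Hence the whole content of the lemma lies in the defining clause.

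\textbf{The clause $K\notin\sANbhd(\Phi_0)$.} Here I would use a concrete criterion. Since $\Phi_0(t,(x,y))=(\phi_0(t,x,y),y)$ fixes the base, an inclusion $\Phi_0(t,\cl K)\subset\Int K$ forces every point $(x,y)$ with $y\in\pi_Y(\cl K)$ to be carried into $\Int K$ within the same fiber. Consequently $\pi_Y(\cl K)\subset\pi_Y(\Int K)$, and $\pi_Y(\Int K)$ is open because projections are open maps. So $\pi_Y(\cl K)$ is a closed set contained in an open subset of itself, i.e.\ clopen. This shows that \emph{a $\Phi_0$-attracting neighborhood has clopen base projection}. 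For a set $K$ that is already attracting for small $\epsilon>0$, I would aim for a partial converse: if $\pi_Y(\cl K)$ is clopen, then $\Phi_0$ restricted to $X\times\pi_Y(\cl K)$ is the situation of Step~1 of Theorem~\ref{thm:singular-att} with no boundary obstruction, so $K\in\sANbhd(\Phi_0)$. Singularity then becomes the statement that $\pi_Y(\cl K)$ is not clopen. For sets of the form $\widehat K$ from Theorem~\ref{thm:singular-att}, this projection is the compact attracting block $W$. Using $\pi_Y(K_1\cup K_2)=\pi_Y(K_1)\cup\pi_Y(K_2)$ from Proposition~\ref{prop:projection-join}, and the analogous inclusion for intersections, I would then check whether non-clopenness survives the lattice operations.

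\textbf{Main obstacle.} I expect this to be the hardest step, and it may fail outright. A union of two non-clopen blocks can equal $Y$, in which case $K_1\cup K_2$ could be $X\times Y$, which is $\Phi_0$-attracting. An intersection can even be $\emptyset$, which is also $\Phi_0$-attracting. The same issue arises for the bounds: $\emptyset$ and $X\times Y$ are attracting for $\Phi_0$, so they are not singular in the strict sense. My first attempt would therefore be to read the bounds $\bot=\emptyset$ and $\top=X\times Y$ as adjoined by convention. Then I would try to show that any union or intersection whose base projection is a proper, nonempty, non-clopen set remains singular, and that the only degenerate outcomes are exactly these adjoined bounds. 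Proving this may require assuming $Y$ connected, so that the only clopen sets are $\emptyset$ and $Y$. If the argument closes, it yields the bounded distributive lattice structure under $\cap,\cup$ claimed in the statement. If it does not close, the precise failure (a union or intersection landing in $\sANbhd(\Phi_0)$ without being $\emptyset$ or $X\times Y$) would serve as the counterexample showing that the defining clause must be weakened.
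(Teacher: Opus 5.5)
Your first paragraph is exactly the paper's proof. Set $\epsilon_0=\min\{\epsilon_{K_1},\epsilon_{K_2}\}$, use that for each $0<\epsilon\le\epsilon_0$ the family $\sANbhd(\Phi_\epsilon)$ is closed under finite unions and intersections, and inherit distributivity from set identities. The paper stops there and never checks the clause $K\notin\sANbhd(\Phi_0)$. It is therefore tacitly working with $\sSANbhd(\Phi_0)=\{K \mid K\in\sANbhd(\Phi_\epsilon)\text{ for all sufficiently small }\epsilon>0\}$.

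Your suspicion about the literal clause is correct, and so is your criterion that a $\Phi_0$-attracting $K$ has $\pi_Y(\cl K)=\pi_Y(\Int K)$ clopen. Even the generators need not be singular in the strict sense. If the block $W$ of Theorem~\ref{thm:singular-att} is clopen, Step~1 already makes $\cl\widehat K$ a $\Phi_0$-attracting neighborhood. This is forced in Example~\ref{ex:rotatingVF}, where $W=Y=S^1$.

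The gap is in your proposed repair, which cannot close. Adjoining $\emptyset$ and $X\times Y$ and assuming $Y$ connected does not save closure under unions. Take $Y=[-1,1]$ with $\dot y=-\epsilon y$, $X=[-2,2]$ with $\dot x=-x$, $N=[-1,1]$, $K_1=N\times[-1,0.1)$ and $K_2=N\times(-0.1,1]$. Each $K_i$ is an attracting neighborhood for every $\Phi_\epsilon$ with $\epsilon>0$. Neither is one for $\Phi_0$, since points of $\cl K_1$ over $y=0.1$ and of $\cl K_2$ over $y=-0.1$ never enter the interior. Yet $K_1\cup K_2=N\times Y$ is a $\Phi_0$-attracting neighborhood different from $X\times Y$. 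So the degenerate outcomes are not only the adjoined bounds.

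Your partial converse also fails for general $K$. Take $Y=S^1$ with $\dot y=\epsilon$, and $X=[-1,1]$ with $\dot x=-a(y)x$, where $a\ge0$ vanishes only at one point $y_0$. The set $[-\tfrac12,\tfrac12]\times S^1$ has clopen projection and is attracting for every $\epsilon>0$, because $\int_0^t a(y+\epsilon s)\,ds$ is eventually bounded below uniformly in $y$. It is not attracting for $\Phi_0$, because $(\tfrac12,y_0)$ is fixed. Step~1 yields the converse only for sets assembled from uniformly fiber-attracting pieces $N_i\times V_i$.

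So the lemma is false under the strict definition, and no argument along your second line can work. The right fix is the fallback you name: drop the clause $K\notin\sANbhd(\Phi_0)$. Your first paragraph is then a complete proof, identical to the paper's.
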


\begin{proof}
Let $K_1,K_2\in \sSANbhd(\Phi_0)$, and let
\[
\epsilon_0=\min\{\epsilon_{K_1},\epsilon_{K_2}\}.
\]
Then, for every $0<\epsilon\le \epsilon_0$, both $K_1$ and $K_2$ are attracting neighborhoods for $\Phi_\epsilon$. Since attracting neighborhoods are closed under finite intersections and unions \cite[Proposition~4.1]{KMV-1a}, both $K_1\cap K_2$ and $K_1\cup K_2$ are attracting neighborhoods for $\Phi_\epsilon$ for all $0<\epsilon\le\epsilon_0$. Hence they belong to $\sSANbhd(\Phi_0)$.
The distributive laws follow from the corresponding set-theoretic identities.
\end{proof}

\begin{remark}
The restriction to finite intersections and union is essential. For an infinite family of singular attracting neighborhoods $\{K_i\}$, the associated thresholds $\epsilon_{K_i}$ may not have a minimum.
\end{remark}

\begin{proposition}\label{prop:meet-hom}
Let $\mathcal L\subset \sAtt(\psi)\ltimes \Pi_0[\sAtt]$ be finite. 
Then there exists $\epsilon_{\mathcal L}>0$ such that for all $0<\epsilon\le\epsilon_{\mathcal L}$,
\[
h_\epsilon:\mathcal L \to \sAtt(\Phi_\epsilon),
\quad
h_\epsilon(u):=\Inv(K(u),\Phi_\epsilon),
\]
is a well-defined meet-homomorphism.
\end{proposition}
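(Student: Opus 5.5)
The plan is to fix, once and for all, a representative $K(u)$ for each $u\in\mathcal L$ via Lemma~\ref{lem:single-khat}, and then show two things. First, $K(u)\cap K(v)$ is itself a singular attracting neighborhood that \emph{realizes} the cascade element $u\wedge v$. Second, any two singular attracting neighborhoods realizing the same cascade element have the same attractor for all sufficiently small $\epsilon$. Since $\mathcal L$ is finite, every threshold appearing below is a minimum over finitely many positive numbers, hence positive. For this reason we take $\epsilon_{\mathcal L}$ to be the minimum of the $\epsilon_1(u)$, of the thresholds for the pairwise intersections $K(u)\cap K(v)$ from Lemma~\ref{lem:san-lattice}, and of the thresholds from the comparison step below. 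Well-definedness of $h_\epsilon$ is then immediate from Theorem~\ref{thm:singular-att}.

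\textbf{Step 1: the intersection realizes the meet.} Write $u=(A,\sigma)$ and $v=(B,\tau)$, with $K(u)=\bigcup_i N_i\times V_i$ over a base block $W$ and $K(v)=\bigcup_j N'_j\times V'_j$ over a base block $W'$. Then
\[
K(u)\cap K(v)=\bigcup_{i,j}(N_i\cap N'_j)\times(V_i\cap V'_j).
\]
For $y\in \cl(V_i\cap V'_j)$, the set $N_i\cap N'_j$ is an attracting neighborhood for $\phi_0(\cdot,\cdot,y)$ with attractor $\sigma(y)\wedge\tau(y)$, because $\omega$ is a lattice homomorphism on $\sANbhd$. Moreover, $W\cap W'$ is a forward-invariant neighborhood of $A\cap B$ and is attracting for $\psi$. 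So Steps~1--3 of the proof of Theorem~\ref{thm:singular-att} apply essentially verbatim. Alternatively, Lemma~\ref{lem:san-lattice} already gives that $K(u)\cap K(v)$ is attracting for small $\epsilon$, and Proposition~\ref{prop-proj_att} gives that its attractor projects onto $A\cap B$. Finally, $h_\epsilon(u)\wedge h_\epsilon(v)=\omega(K(u)\cap K(v))$, since $\omega\colon\sANbhd(\Phi_\epsilon)\to\sAtt(\Phi_\epsilon)$ is a lattice homomorphism.

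\textbf{Step 2: independence of the representative (main obstacle).} It remains to show $\Inv(K(u\wedge v),\Phi_\epsilon)=\Inv(K(u)\cap K(v),\Phi_\epsilon)$. Here $K(u\wedge v)$ was chosen independently of $K(u)$ and $K(v)$, so this is where the real work lies. I would prove a general comparison statement. Suppose $K,K'$ are singular attracting neighborhoods built over base blocks $W,W'$ for the same base attractor $A$, and realizing the same germ $\sigma$ on $A$. Then $\Inv(K,\Phi_\epsilon)=\Inv(K',\Phi_\epsilon)$ for small $\epsilon$.

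The argument has three parts.
\begin{enumerate}
\item At $\epsilon=0$, the maximal invariant set of $\Phi_0|_{X\times W}$ in $\cl(K)$ is $\bigcup_{y\in W}\sigma(y)\times\{y\}$. This holds because the base is frozen and each fiber is attracted to $\sigma(y)$.
\item By upper semicontinuity of attractors under perturbation (the stability of $\sANbhd$ from \cite{DKV}, Lemma~5.1), $\Inv(K,\Phi_\epsilon)$ lies in any prescribed neighborhood of that set once $\epsilon$ is small.
\item By Proposition~\ref{prop-proj_att}, $\pi_Y(\Inv(K,\Phi_\epsilon))=A$, which lies in the interior of $W\cap W'$. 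Over a neighborhood of $A$, the set $\sigma(y)\times\{y\}$ lies in the interior of $K'$. Hence, for small $\epsilon$, $\Inv(K,\Phi_\epsilon)$ is an invariant subset of $K\cap K'$.
\end{enumerate}
This gives $\Inv(K)\subset\Inv(K\cap K')\subset\Inv(K)$, and by symmetry $\Inv(K)=\Inv(K')$.

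The delicate point in part~2 is that $K$ is attracting only relative to $X\times W$ at $\epsilon=0$. I would handle it by working in $\sDS(\R^+,X\times W)$, exactly as in Step~2 of Theorem~\ref{thm:singular-att}. There I would use compactness to obtain a uniform neighborhood $\bigcup_{y\in \cl O}\sigma(y)\times\{y\}\subset \Int K'$ for some neighborhood $O$ of $A$. Applying this comparison with $K=K(u\wedge v)$ and $K'=K(u)\cap K(v)$ gives $h_\epsilon(u\wedge v)=h_\epsilon(u)\wedge h_\epsilon(v)$. Preservation of $\top$ and $\bot$ follows the same way, via $K(\top)$ versus $X\times Y$ and $K(\bot)=\emptyset$.
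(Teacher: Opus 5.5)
Your proposal is correct and follows the same outline as the paper: replace $K(u\wedge v)$ by $K(u)\cap K(v)$, then use that $\Inv(\cdot,\Phi_\epsilon)$ is a lattice homomorphism on attracting neighborhoods. The paper, taking $\epsilon_{\mathcal L}=\min_{u}\epsilon_1(u)$, only asserts that $K(u)\cap K(v)$ realizes $u\wedge v$ and that $\Inv(K(u),\Phi_\epsilon)$ is independent of the chosen $K(u)$; your comparison argument (upper semicontinuity at $\epsilon=0$ relative to $X\times W$, localization of the attractor over $A$ into $\Int K'$, and $\Inv(K)\subset\Inv(K\cap K')\subset\Inv(K)$), together with the finitely many extra thresholds you build into $\epsilon_{\mathcal L}$, supplies the justification the paper leaves out.
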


\begin{proof}
Since $\mathcal L$ is finite, define
\[
\epsilon_{\mathcal L}:=\min_{u\in\mathcal L}\epsilon_1(u).
\]
For each $u\in\mathcal L$, fix a singular attracting neighborhood $K(u)$ valid for all $0<\epsilon\le \epsilon_{\mathcal L}$.

\medskip

\noindent
\emph{Well-definedness.}
By construction, each singular attracting neighborhood $K(u)$ realizes the same attractor associated with $u$, that is, $\Inv(K(u),\Phi_\epsilon)=A_u.$ Hence if $K(u)$ and $K'(u)$ are two such choices, then $\Inv(K(u),\Phi_\epsilon)=A_u=\Inv(K'(u),\Phi_\epsilon),$ so $h_\epsilon$ is independent of the choice.

\medskip

\noindent
\emph{Meet preservation.}
Let $u,v\in\mathcal L$. Then $h_\epsilon(u\wedge v)= \Inv(K(u\wedge v),\Phi_\epsilon).$ Since $K(u)\cap K(v)$ is a singular attracting neighborhood and realizes the same attractor as $u\wedge v$, we may replace $K(u\wedge v)$ by $K(u)\cap K(v)$. Therefore, 
\[
h_\epsilon(u\wedge v)=\Inv(K(u)\cap K(v),\Phi_\epsilon)=\Inv(K(u),\Phi_\epsilon)\wedge\Inv(K(v),\Phi_\epsilon)=h_\epsilon(u)\wedge h_\epsilon(v).
\]
\end{proof}

Proposition~\ref{prop:meet-hom} shows that every finite meet-semilattice \( \mathcal L \subset \sAtt(\psi)\ltimes \Pi_0[\sAtt] \) admits a homomorphic image  as a meet-semilattice of attractors via \( h_\epsilon(u)=\Inv(K(u),\Phi_\epsilon). \)

\begin{remark}
It is important to note that the map $h_\epsilon$ need not be injective, see Example~\ref{ex:not_inj}. Indeed, while the singular attracting neighborhoods constructed from the cascade product provide a model for the singularly perturbed dynamics, dynamical structures in the parametrized system that sit over regions where the base flow is not recurrent collapse in the singularly perturbed flow, ie.\ each singular attracting neighborhood need not contain a distinct attractor.    
\end{remark}

The following example explains why this $\mathcal{L}$ cannot, in general, be lifted directly to a meet-homomorphism into \(\sSANbhd(\Phi_0)\).

\begin{example}
\label{ex:meet-collapse}
Let \(\mathcal L\) be a finite meet-semilattice with elements \(u_1,u_2,u_3\) satisfying
\[
u_1\wedge u_2=u_1\wedge u_3=u_2\wedge u_3=m.
\]
A concrete realization is shown in Figure~\ref{fig:meet-collapse-example}. 

At each fiber \(y=\pm\tfrac12\), the dynamics admits two attracting equilibria (upper and lower) and one repelling equilibrium. In what follows, we refer only to the attracting equilibria. Each element \(u_i\) is represented by selecting such equilibria over the fibers \(y=\pm\tfrac12\). More precisely:
\[
\begin{aligned}
u_1 &: \text{lower equilibrium over } y=-\tfrac12 
\text{ together with both equilibria over } y=\tfrac12,\\
u_2 &: \text{both equilibria over } y=\tfrac12,\\
u_3 &: \text{upper equilibrium over } y=-\tfrac12 
\text{ together with the lower equilibrium over } y=\tfrac12.
\end{aligned}
\]

In particular, all three elements share the same lower equilibrium over \(y=\tfrac12\), which defines their common meet \(m\) and is highlighted in red in the figure.

\begin{center}
\small

\begin{tikzpicture}[scale=0.90, every node/.style={font=\small}]

\begin{scope}[xshift=0cm]

\draw[-] (-2.2,-1.8) -- (2.2,-1.8);

\node at (-1.5,-2.2) {$-\tfrac12$};
\node at (0,-2.2) {$0$};
\node at (1.5,-2.2) {$\tfrac12$};

\fill (-1.5,0.8) circle (2pt);
\fill (-1.5,-0.8) circle (2pt);
\fill (1.5,0.8) circle (2pt);
\fill (-1.5,-1.8) circle (2pt);
\fill (0,-1.8) circle (2pt);
\fill (0,0) circle (2pt);
\fill (1.5,-1.8) circle (2pt);

\draw[thick]
  (-1.5,-0.8) .. controls (-0.9,-1.0) and (-0.35,-0.35) .. (0,0)
  .. controls (0.35,0.35) and (0.95,0.95) .. (1.5,0.8);

\draw[thick]
  (-1.5,0.8) .. controls (-0.95,0.95) and (-0.35,0.35) .. (0,0)
  .. controls (0.35,-0.35) and (0.95,-0.95) .. (1.5,-0.8);

\fill[red] (1.5,-0.8) circle (2pt);

\draw[->] (-1.5,1.4) -- (-1.5,0.9);
\draw[->] (-1.5,-0.25) -- (-1.5,0.25);
\draw[<-] (-1.5,-1.55) -- (-1.5,-1.05);

\draw[->] (0,0.95) -- (0,0.35);
\draw[<-] (0,-0.95) -- (0,-0.35);

\draw[->] (1.5,1.4) -- (1.5,0.9);
\draw[->] (1.5,-0.25) -- (1.5,0.25);
\draw[<-] (1.5,-1.55) -- (1.5,-1.05);

\draw[<-] (-2,-1.8) -- (-2.1,-1.8);
\draw[<-] (-1,-1.8) -- (-0.5,-1.8);
\draw[->] (0.5,-1.8) -- (1,-1.8);
\draw[<-] (2,-1.8) -- (2.1,-1.8);

\end{scope}

\begin{scope}[xshift=5.4cm]

\node (u1) at (0,0) {$u_1$};
\node (u2) at (-1.2,0) {$u_2$};
\node (u3) at (1.2,0) {$u_3$};

\node[red] (m) at (0,-1.8) {$m$};

\draw (u1) -- (m);
\draw (u2) -- (m);
\draw (u3) -- (m);

\end{scope}

\end{tikzpicture}

\captionof{figure}{
Three elements whose pairwise meets coincide in the cascade (right). 
The arrows indicate the direction of the dynamics, and all intersections nevertheless realize to the same attractor (red point).
}
\label{fig:meet-collapse-example}

\end{center}

Choose singular attracting neighborhoods
\(
K(u_1), K(u_2), K(u_3)\in \sSANbhd(\Phi_0)
\)
representing these elements. At the neighborhood level, their respective intersections
\(
K(u_1)\cap K(u_2),
K(u_1)\cap K(u_3),
\text{ and }K(u_2)\cap K(u_3)
\)
need not coincide as subsets of \(X\times Y\), even though the corresponding meets in \(\mathcal L\) agree. Thus the assignment
\(
u \mapsto K(u)
\)
does not, in general, define a meet-homomorphism into \(\sSANbhd(\Phi_0)\).

\medskip

However, after realization this discrepancy disappears, and 
\[
\Inv(K(u_1)\cap K(u_2),\Phi_\epsilon)
=
\Inv(K(u_1)\cap K(u_3),\Phi_\epsilon)
=
\Inv(K(u_2)\cap K(u_3),\Phi_\epsilon)
=
h_\epsilon(m).
\]
This mechanism underlies Proposition~\ref{prop:meet-hom} and motivates the lattice construction of Subsection~\ref{subsec:minimal-construction}.

\end{example}

 To pass from the meet-semilattice $h_\epsilon(\mathcal L)$ to a finite distributive sublattice, we now introduce a concrete construction at the level of singular attracting neighborhoods: starting from chosen neighborhoods representing the elements of $\mathcal L$, close the neighborhoods under finite unions and intersections, and then realize the resulting neighborhood lattice in $\sAtt(\Phi_\epsilon)$.

\subsection{Sublattices of singular attracting neighborhoods}
\label{subsec:minimal-construction}

We now give a fully constructive procedure which associates to any finite meet-semilattice in the cascade a finite distributive lattice of singular attracting neighborhoods. Unlike the abstract realization result of Proposition~\ref{prop:meet-hom}, this construction operates directly at the level of neighborhoods and requires no compatibility assumptions. It provides an explicit mechanism for generating attractor lattices from singular data.

Let $\mathcal L\subset \sAtt(\psi)\ltimes \Pi_0[\sAtt]$ be a finite meet-semilattice. For each $u\in\mathcal L$, choose a singular attracting neighborhood $K(u)\in \sSANbhd(\Phi_0)$. Let
\[
\epsilon_{\mathcal L}:=\min_{u\in\mathcal L}\epsilon_1(u)>0.
\]

\begin{definition}\label{def:generated-neighborhood-lattice}
The \emph{singular attracting neighborhood lattice generated by $\mathcal L$} is
\[
\mathcal K(\mathcal L)
:=
\big\langle\, K(u)\mid u\in\mathcal L\,\big\rangle_{\mathrm{Lat}}
\subset \sSANbhd(\Phi_0),
\]
that is, the smallest sublattice of $\sSANbhd(\Phi_0)$ containing the family $\{K(u)\mid u\in\mathcal L\}$.

Since $\sSANbhd(\Phi_0)$ is a distributive lattice of subsets under union and intersection, the generated sublattice admits the explicit description
\[
\mathcal K(\mathcal L)
=
\left\{
\bigcup_{j=1}^m \bigcap_{i\in I_j} K(u_i)
\;\middle|\;
m\ge 1,\ I_j\subset \mathcal L \text{ finite}
\right\}
\cup \{\emptyset\}.
\]
\end{definition}

\begin{proposition}\label{prop:minimal-san}
The set \(\mathcal K(\mathcal L)\) is a finite bounded distributive lattice, and every element of  $\mathcal K(\mathcal L)$ is an attracting neighborhood for \(\Phi_\epsilon\) whenever \(0<\epsilon\le \epsilon_{\mathcal L}\).
\end{proposition}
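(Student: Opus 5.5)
The proof has three parts: $\mathcal K(\mathcal L)$ is finite, it is a bounded distributive sublattice, and its elements are attracting neighborhoods for $\Phi_\epsilon$ with a single uniform threshold $\epsilon_{\mathcal L}$.

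\emph{Finiteness.} Since $\mathcal L$ is finite, there are at most $|\mathcal L|$ generators $K(u)$. By the explicit description in Definition~\ref{def:generated-neighborhood-lattice}, every element of $\mathcal K(\mathcal L)$ is a finite union of finite intersections of generators. Each finite intersection is determined by a subset $I\subset\mathcal L$, so there are at most $2^{|\mathcal L|}$ distinct intersections. A union is then determined by a set of such intersections, so there are at most $2^{2^{|\mathcal L|}}$ distinct elements, plus $\emptyset$.

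\emph{Lattice structure.} Closure under $\cup$ is immediate from the description. Closure under $\cap$ follows by distributing
\[
\Bigl(\bigcup_j \bigcap_{i\in I_j}K(u_i)\Bigr)\cap\Bigl(\bigcup_k\bigcap_{i\in I'_k}K(u_i)\Bigr)=\bigcup_{j,k}\bigcap_{i\in I_j\cup I'_k}K(u_i).
\]
Distributivity is inherited from set operations, as in Lemma~\ref{lem:san-lattice}. The bottom element is $\emptyset$. For the top, I take the union of all generators, which is the maximum of the sublattice. If the convention demands the top of $\sSANbhd(\Phi_0)$ itself, I adjoin $X\times Y$, which is an attracting neighborhood for every $\Phi_\epsilon$ because the phase space is compact and invariant.

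\emph{Uniform attracting property.} Fix $0<\epsilon\le\epsilon_{\mathcal L}$. By Lemma~\ref{lem:single-khat} and the choice $\epsilon_{\mathcal L}=\min_u\epsilon_1(u)$, each generator $K(u)$ is an attracting neighborhood for $\Phi_\epsilon$. Attracting neighborhoods of a fixed system are closed under finite intersections and unions by \cite[Proposition~4.1]{KMV-1a}. Applying this first to the finitely many intersections $\bigcap_{i\in I_j}K(u_i)$ and then to their finite union shows that every element of $\mathcal K(\mathcal L)$ lies in $\sANbhd(\Phi_\epsilon)$. The empty set is trivially an attracting neighborhood.

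The main subtlety is that the threshold $\epsilon_{\mathcal L}$ must not deteriorate as we pass to the generated lattice. Lemma~\ref{lem:san-lattice} combines thresholds by taking a minimum at each step. Here, instead, the closure property is applied at a fixed $\epsilon$ to a fixed finite family, so no new thresholds arise. The same argument also gives the remaining membership claim: each element of $\mathcal K(\mathcal L)$ is an attracting neighborhood for all $\epsilon\in(0,\epsilon_{\mathcal L}]$, hence is a singular attracting neighborhood whenever it fails to attract for $\Phi_0$. The cases where an element happens to attract already at $\epsilon=0$ are absorbed by regarding $\sSANbhd(\Phi_0)$ as containing such sets, consistent with Lemma~\ref{lem:san-lattice}.
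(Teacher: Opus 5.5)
Your proposal is correct and follows essentially the same route as the paper. Finiteness comes from counting the at most $2^{|\mathcal L|}$ intersections of generators, and the uniform attracting property comes from applying closure of $\sANbhd(\Phi_\epsilon)$ under finite unions and intersections at each fixed $0<\epsilon\le\epsilon_{\mathcal L}$, so no new thresholds arise. The only differences are that you check closure under $\cap$ and the bounds directly instead of inheriting them from $\sSANbhd(\Phi_0)$, and you rightly flag that some generated sets (e.g.\ $\emptyset$) already attract for $\Phi_0$; the paper passes over this point, and it does not affect the statement.
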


\begin{proof}
Each generator \(K(u)\) is a singular attracting neighborhood, hence an attracting neighborhood for \(\Phi_\epsilon\) for all \(0<\epsilon\le \epsilon_{\mathcal L}\). Since attracting neighborhoods are closed under finite intersections and finite unions, every element obtained from the generators by finitely many such operations is again an attracting neighborhood for \(\Phi_\epsilon\). Thus \(\mathcal K(\mathcal L)\subset \sSANbhd(\Phi_0)\).

By Definition~\ref{def:generated-neighborhood-lattice}, every element of \(\mathcal K(\mathcal L)\) is a finite union of finite intersections of the generators \(K(u)\). Because \(\mathcal L\) is finite, there are only finitely many distinct finite intersections of the sets \(K(u)\), namely at most \(2^{|\mathcal L|}\). Hence there are only finitely many unions of such intersections, so \(\mathcal K(\mathcal L)\) is finite.

Since \(\mathcal K(\mathcal L)\) is a sublattice of the bounded distributive lattice \(\sSANbhd(\Phi_0)\), it is itself a finite bounded distributive lattice. Finally, every \(N\in \mathcal K(\mathcal L)\) is an attracting neighborhood for \(\Phi_\epsilon\) whenever \(0<\epsilon\le \epsilon_{\mathcal L}\), because this property is preserved under finite unions and finite intersections.
\end{proof}

\begin{theorem}\label{thm:minimal-realization}
For every $0<\epsilon\le \epsilon_{\mathcal L}$, the map
\[
\Inv(\,\cdot\,,\Phi_\epsilon):
\mathcal K(\mathcal L)\to \sAtt(\Phi_\epsilon)
\]
is a lattice homomorphism. In particular,
\[
\mathcal A_\epsilon(\mathcal L)
:=
\Inv(\mathcal K(\mathcal L),\Phi_\epsilon)
\subset \sAtt(\Phi_\epsilon)
\]
is a finite distributive sublattice.
\end{theorem}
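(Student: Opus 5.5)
The plan is to reduce the statement to the standard fact, recalled in Section~\ref{subsec:AttLat}, that for a fixed dynamical system the map $\omega=\Inv\colon\sANbhd(\phi)\to\sAtt(\phi)$ is a surjective lattice homomorphism, with $\vee=\cup$ and $\wedge=\cap$ on neighborhoods and $\vee=\cup$, $\wedge=\omega(\cdot\cap\cdot)$ on attractors. Fix $0<\epsilon\le\epsilon_{\mathcal L}$. By Proposition~\ref{prop:minimal-san}, every element of $\mathcal K(\mathcal L)$ is an attracting neighborhood for $\Phi_\epsilon$. Hence $\mathcal K(\mathcal L)\subset\sANbhd(\Phi_\epsilon)$, and since both lattices use union and intersection, $\mathcal K(\mathcal L)$ is a sublattice of $\sANbhd(\Phi_\epsilon)$. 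The key point to check is that the lattice operations of $\mathcal K(\mathcal L)$, which are inherited from $\sSANbhd(\Phi_0)$ via Lemma~\ref{lem:san-lattice}, agree with those of $\sANbhd(\Phi_\epsilon)$. They do, because in both cases they are set-theoretic $\cup$ and $\cap$. Moreover, Proposition~\ref{prop:minimal-san} guarantees that $\cup$ and $\cap$ of elements of $\mathcal K(\mathcal L)$ remain attracting neighborhoods at this same $\epsilon$. This is why the uniform threshold $\epsilon_{\mathcal L}$ is needed: we must not pass to a smaller $\epsilon$ for different elements.

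With this in place, $\Inv(\cdot,\Phi_\epsilon)|_{\mathcal K(\mathcal L)}$ is the restriction of the lattice homomorphism $\omega\colon\sANbhd(\Phi_\epsilon)\to\sAtt(\Phi_\epsilon)$ to a sublattice, and is therefore itself a lattice homomorphism. To be explicit, I would verify directly the two identities
\[
\Inv(K_1\cup K_2)=\Inv(K_1)\cup\Inv(K_2)
\qquad\hbox{and}\qquad
\Inv(K_1\cap K_2)=\omega\bigl(\Inv(K_1)\cap\Inv(K_2)\bigr),
\]
by citing \cite[Prop.~4.1]{KMV-1a} and the homomorphism property of $\omega$ from \cite{KMV-1a,KV25}, rather than reproving them. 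I would also check preservation of bounds. The bottom $\emptyset$ maps to $\emptyset$. For the top, the top of $\mathcal K(\mathcal L)$ is the union of all generators, and it maps to the join of their attractors. The bound-preservation should be stated relative to $\mathcal K(\mathcal L)$ and its image, not relative to the global top $\Inv(X\times Y)$.

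Finally, the image of a finite lattice under a lattice homomorphism is a sublattice of the codomain, and it is finite. Since $\sAtt(\Phi_\epsilon)$ is distributive, any sublattice is distributive; equivalently, homomorphic images of distributive lattices are distributive. This gives that $\mathcal A_\epsilon(\mathcal L)$ is a finite distributive sublattice.

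The main obstacle I expect is the meet step, namely that $\omega(K_1\cap K_2)=\omega(\omega(K_1)\cap\omega(K_2))$. This is exactly where the general theory of \cite{KMV-1a} is used. The natural route is to note that $\omega(K_1)\cap\omega(K_2)\subset K_1\cap K_2$, which is forward invariant eventually. The reverse inclusion uses the fact that $\Inv(K_1\cap K_2)$ is invariant and contained in both $\omega(K_i)=\Inv(K_i)$.
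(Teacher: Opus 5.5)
Your proposal is correct and follows essentially the same route as the paper. Proposition~\ref{prop:minimal-san} places $\mathcal K(\mathcal L)$ inside $\sANbhd(\Phi_\epsilon)$ as a sublattice under $\cup,\cap$, so $\Inv(\cdot,\Phi_\epsilon)$ restricts to a lattice homomorphism whose image is a finite distributive sublattice of $\sAtt(\Phi_\epsilon)$; your extra checks (the sketch of the meet identity, and the remark that the top of $\mathcal K(\mathcal L)$ need only map to the top of the image rather than to $\Inv(X\times Y,\Phi_\epsilon)$) are sound refinements of points the paper leaves implicit.
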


\begin{proof}
By Proposition~\ref{prop:minimal-san}, every element of $\mathcal K(\mathcal L)$ is an attracting neighborhood for $\Phi_\epsilon$ whenever $0<\epsilon\le \epsilon_{\mathcal L}$. 
Since \(\mathcal K(\mathcal L)\) is a finite distributive lattice and \(\Inv(\,\cdot\,,\Phi_\epsilon)\) is a lattice homomorphism, its image \(\mathcal A_\epsilon(\mathcal L)\) is a finite distributive sublattice of \(\sAtt(\Phi_\epsilon)\).
\end{proof}

\begin{definition}
Let \(S\subset \sAtt(\Phi_\epsilon)\). We write
\[
\langle S\rangle_{\mathrm{Lat}}
\]
for the \emph{sublattice of \(\sAtt(\Phi_\epsilon)\) generated by \(S\)}, i.e.\ the smallest sublattice of \(\sAtt(\Phi_\epsilon)\) containing \(S\). Since \(\sAtt(\Phi_\epsilon)\) is a distributive lattice, every element of \(\langle S\rangle_{\mathrm{Lat}}\) can be written explicitly as a finite join of finite meets of elements of \(S\). In particular,
\[
\langle S\rangle_{\mathrm{Lat}}
=
\left\{
\bigvee_{i=1}^k \bigwedge_{j=1}^{m_i} s_{ij}
\;\middle|\;
s_{ij}\in S,\; k,m_i \in \mathbb{N}
\right\}.
\]
\end{definition}

The next result shows that, although the neighborhood lattice \(\mathcal K(\mathcal L)\) may contain additional intersections not dictated by the original meet-semilattice structure of \(\mathcal L\), these extra neighborhood-level elements collapse under realization and do not produce new attractors; see Example~\ref{ex:meet-collapse}. Consequently, the resulting attractor lattice is precisely the distributive lattice generated by the meet-image \(h_\epsilon(\mathcal L)\).

\begin{theorem}\label{thm:comparison-with-meet-image}
For every $0<\epsilon\le \epsilon_{\mathcal L}$,
\[
\mathcal A_\epsilon(\mathcal L)=\big\langle\, h_\epsilon(\mathcal L)\,\big\rangle_{\mathrm{Lat}}
\subset \sAtt(\Phi_\epsilon),
\]
that is, the attractor lattice obtained from $\mathcal K(\mathcal L)$ is exactly the finite distributive sublattice generated by the meet-image $h_\epsilon(\mathcal L)$.
\end{theorem}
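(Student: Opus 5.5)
The plan is to prove the two inclusions separately. Both rest on the fact, from Theorem~\ref{thm:minimal-realization} and Proposition~\ref{prop:minimal-san}, that $\Inv(\cdot,\Phi_\epsilon)$ restricted to $\mathcal K(\mathcal L)$ is a lattice homomorphism into $\sAtt(\Phi_\epsilon)$ for $0<\epsilon\le\epsilon_{\mathcal L}$. Here $\Inv(K_1\cap K_2)=\Inv(K_1)\wedge\Inv(K_2)$ and $\Inv(K_1\cup K_2)=\Inv(K_1)\vee\Inv(K_2)$, as for the surjective homomorphism $\omega\colon\sANbhd\to\sAtt$ of Section~\ref{subsec:AttLat}.

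\emph{Inclusion $\mathcal A_\epsilon(\mathcal L)\subset\langle h_\epsilon(\mathcal L)\rangle_{\mathrm{Lat}}$.} By Definition~\ref{def:generated-neighborhood-lattice}, every $N\in\mathcal K(\mathcal L)$ is either $\emptyset$ or of the form $N=\bigcup_{j=1}^m\bigcap_{i\in I_j}K(u_i)$. Applying the homomorphism $\Inv(\cdot,\Phi_\epsilon)$ gives
\[
\Inv(N,\Phi_\epsilon)=\bigvee_{j=1}^m\bigwedge_{i\in I_j}\Inv(K(u_i),\Phi_\epsilon)=\bigvee_{j=1}^m\bigwedge_{i\in I_j}h_\epsilon(u_i),
\]
which is a finite join of finite meets of elements of $h_\epsilon(\mathcal L)$. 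Hence it lies in $\langle h_\epsilon(\mathcal L)\rangle_{\mathrm{Lat}}$. The empty set maps to $\emptyset$, the bottom of $\sAtt(\Phi_\epsilon)$. I would include $\emptyset$ in the generated sublattice, following the paper's convention that bounded sublattices contain the bounds. Alternatively, $\emptyset$ arises as $h_\epsilon(\bot)$ whenever $\bot\in\mathcal L$.

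\emph{Inclusion $\langle h_\epsilon(\mathcal L)\rangle_{\mathrm{Lat}}\subset\mathcal A_\epsilon(\mathcal L)$.} Each generator $h_\epsilon(u)=\Inv(K(u),\Phi_\epsilon)$ lies in $\mathcal A_\epsilon(\mathcal L)$, since $K(u)\in\mathcal K(\mathcal L)$. By Theorem~\ref{thm:minimal-realization}, $\mathcal A_\epsilon(\mathcal L)$ is the homomorphic image of a lattice and therefore a sublattice of $\sAtt(\Phi_\epsilon)$. It is thus closed under $\vee$ and $\wedge$, and minimality of the generated sublattice gives the inclusion.

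The main obstacle is consistency of choices. The same $K(u)$ must define $h_\epsilon$ (in Proposition~\ref{prop:meet-hom}) and generate $\mathcal K(\mathcal L)$. Also, the threshold $\epsilon_{\mathcal L}=\min_u\epsilon_1(u)$ must make every generator, and hence every finite union and intersection of generators, an attracting neighborhood for $\Phi_\epsilon$. I would fix the family $\{K(u)\}$ once at the start and cite Proposition~\ref{prop:minimal-san} for the uniform threshold.

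I would stress that no compatibility between $K(u\wedge v)$ and $K(u)\cap K(v)$ is needed. Only the generators enter the argument, so the neighborhood-level discrepancy in Example~\ref{ex:meet-collapse} does not arise. The identification $h_\epsilon(u\wedge v)=h_\epsilon(u)\wedge h_\epsilon(v)$ from Proposition~\ref{prop:meet-hom} is not used. It only confirms that the meet structure of $\mathcal L$ is respected inside the generated lattice.
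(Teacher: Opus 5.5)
Your proposal is correct and matches the paper's proof essentially step for step. For $\mathcal A_\epsilon(\mathcal L)\subseteq\langle h_\epsilon(\mathcal L)\rangle_{\mathrm{Lat}}$ you push a union of intersections of the $K(u)$ through the lattice homomorphism $\Inv(\cdot,\Phi_\epsilon)$, and for the reverse inclusion you note that each $h_\epsilon(u)$ lies in the sublattice $\mathcal A_\epsilon(\mathcal L)$ of Theorem~\ref{thm:minimal-realization}. Your extra care is a sensible tightening of points the paper leaves implicit: you fix one family $\{K(u)\}$ for both $h_\epsilon$ and $\mathcal K(\mathcal L)$, and you treat $\emptyset$ explicitly.
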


\begin{proof}
By Definition~\ref{def:generated-neighborhood-lattice}, every element of $\mathcal K(\mathcal L)$ can be written as a finite union of finite intersections of the generators $K(u)$. Thus it suffices to compute the image of such expressions under $\Inv(\,\cdot\,,\Phi_\epsilon)$.

Let
\[
N=\bigcup_{j=1}^m \bigcap_{i\in I_j} K(u_i)\in \mathcal K(\mathcal L).
\]
Since $\Inv$ is a lattice homomorphism on attracting neighborhoods,
\[
\begin{aligned}
\Inv(N,\Phi_\epsilon)
&=
\bigvee_{j=1}^m
\Inv\!\Bigl(\bigcap_{i\in I_j} K(u_i),\Phi_\epsilon\Bigr) \\
&=
\bigvee_{j=1}^m
\bigwedge_{i\in I_j} \Inv(K(u_i),\Phi_\epsilon) \\
&=
\bigvee_{j=1}^m
\bigwedge_{i\in I_j} h_\epsilon(u_i).
\end{aligned}
\]
Hence every element of $\mathcal A_\epsilon(\mathcal L)$ lies in the sublattice generated by $h_\epsilon(\mathcal L)$:
\[
\mathcal A_\epsilon(\mathcal L)\subseteq \big\langle\, h_\epsilon(\mathcal L)\,\big\rangle_{\mathrm{Lat}}.
\]

Conversely, because each generator $K(u)$ belongs to $\mathcal K(\mathcal L)$, we have
\[
h_\epsilon(u)=\Inv(K(u),\Phi_\epsilon)\in \mathcal A_\epsilon(\mathcal L)
\qquad \text{for all } u\in\mathcal L.
\]
Since $\mathcal A_\epsilon(\mathcal L)$ is a sublattice by Theorem~\ref{thm:minimal-realization}, it contains the smallest sublattice generated by $h_\epsilon(\mathcal L)$. Therefore
\[
\big\langle\, h_\epsilon(\mathcal L)\,\big\rangle_{\mathrm{Lat}}
\subseteq
\mathcal A_\epsilon(\mathcal L).
\]
Combining the two inclusions gives the result.
\end{proof}

\begin{remark}
The construction yields a practical procedure for computing attractors of $\Phi_\epsilon$ from singular data at $\Phi_0$. Given $\mathcal L$, one builds $\mathcal K(\mathcal L)$ via unions and intersections of $K(u)$ and recovers attractors through $\Inv(\cdot,\Phi_\epsilon)$. This decouples the problem to first analyzing the base dynamics to extract a finite lattice of attractors, and then analyzing sections of the parametrized system over the base attractors. This avoids direct analysis in the higher-dimensional product phase space. 
\end{remark}

\subsection{Lax joins and continuation}
\label{subsec:lax-joins}

The cascade introduced above is naturally closed under meets, but in general it is not closed under joins. Indeed, if \((A,\sigma)\) and \((B,\tau)\) are elements of
\(
\sAtt(\psi)\ltimes \Pi_0[\sAtt],
\)
then even when the pointwise join of the restricted sections is defined on \(A\cap B\), there need not exist a continuous section over \(A\cup B\) whose restrictions recover \(\sigma\) and \(\tau\) exactly. Thus an exact join operation is not available in general.

A natural weakening is provided by Libkind's notion of \emph{lax continuation of attractors}. In her setting, if \(U\subset V\) are open parameter regions and \(\sigma_U\in \Pi_0[\sAtt](U)\), one does not require an attractor over \(V\) to restrict exactly to \(\sigma_U\); instead one requires only that
\[
\sigma_U \le \sigma_V|_U.
\]
That is, the attractor over the larger region is allowed to enlarge the attractor over the smaller region after restriction. This is precisely Libkind's definition of lax continuation; see Definition~6.2.2 in her dissertation \cite{Libkind2023}. Moreover, lax continuation is closed under finite meets, and, when working in a complete-lattice setting, the least such continuation is obtained as a left adjoint to the restriction map \cite[Definition~6.2.2 and Proposition~6.2.3]{Libkind2023}.

Motivated by this idea, one may ask whether the cascade admits an analogous \emph{lax join}. The correct formulation is necessarily conditional: the existence of a \emph{least} lax extension requires a setting in which the relevant lattices of sections admit arbitrary meets. Accordingly, suppose that for each open set \(U\subset Y\) we work in a complete-lattice enlargement of the section lattice in which restriction maps preserve arbitrary meets. Let \((A,\sigma)\) and \((B,\tau)\) be cascade elements, and assume that the fiber over \(A\cup B\) lies in this complete-lattice setting. Motivated by this idea, we define the following \emph{lax join} operation on the cascade.

\begin{definition}
Let \((A,\sigma),(B,\tau)\in \sAtt(\psi)\ltimes \Pi_0[\sAtt]\). Define
\[
(A,\sigma)\vee_{\mathrm{lax}} (B,\tau)
:=
\left(A\cup B,\ \widehat{\theta}\right),
\]
where
\[
\widehat{\theta}
:=
\bigwedge
\left\{
\theta \in \Pi_0[\sAtt](A\cup B)
\;\middle|\;
\sigma \le \theta|_A,\ 
\tau \le \theta|_B
\right\}.
\]
\end{definition}

When this meet exists, \(\widehat{\theta}\) is the minimal section over \(A\cup B\) whose restrictions dominate both \(\sigma\) and \(\tau\). In this sense, \(\vee_{\mathrm{lax}}\) is the direct analogue of Libkind's least lax continuation.

In contrast to Libkind's framework, we do not systematically pass to a $\mathbf{CLat}$-valued subsheaf, since our construction in the present paper is based on finite meet-semilattices. In particular, while completeness is required to define least lax extensions in full generality, it is not needed for the finite constructions underlying our realization results, since every finite bounded distributive lattice is complete.

However, this construction does not resolve the main algebraic difficulty of the present paper. Even when least lax joins exist, the resulting structure need not be distributive, and therefore does not in general furnish the compatibility needed for the singular-neighborhood constructions of Subsection~\ref{subsec:minimal-construction}. In particular, lax joins clarify how one may enlarge attractor data when exact continuation fails, but they do not by themselves produce the lattice-theoretic compatibility required for our realization results.

For this reason, the role of lax joins in the present work is conceptual rather than structural. They place the cascade construction in direct relation with Libkind's theory of attractor continuation, and they explain how joins should be interpreted when exact continuation is obstructed, but the main results of this paper rely instead on the explicit neighborhood-level constructions developed above.

\section{Examples}\label{sec:examples}

\textbf{Note: In the phase portraits of all of the examples in this section, the coordinate axes are reversed for visualization purposes; the horizontal axis represents the variable \(y\), while the vertical axis represents the variable \(x\).}

\subsection{Example where the realization of the cascade is an isomorphism}

\begin{example}\label{ex:coupled}
Consider the planar system
\begin{equation}\label{eq:coupled-system}
  \begin{aligned}
    \dot x &= x\bigl(y-(x-1)^2\bigr),\\
    \dot y &= \epsilon y\bigl(\tfrac14 - y^2\bigr),
  \end{aligned}
\end{equation}
on the rectangle \(R := [-1,2]\times[-1,1]\subset \mathbb{R}^2.\)   In the phase portrait we represent the unstable equilibria by open circles and the stable equilibria by filled circles.    

\begin{center}
\begin{minipage}[c]{0.45\textwidth}
    \centering
    \includegraphics[width=\linewidth]{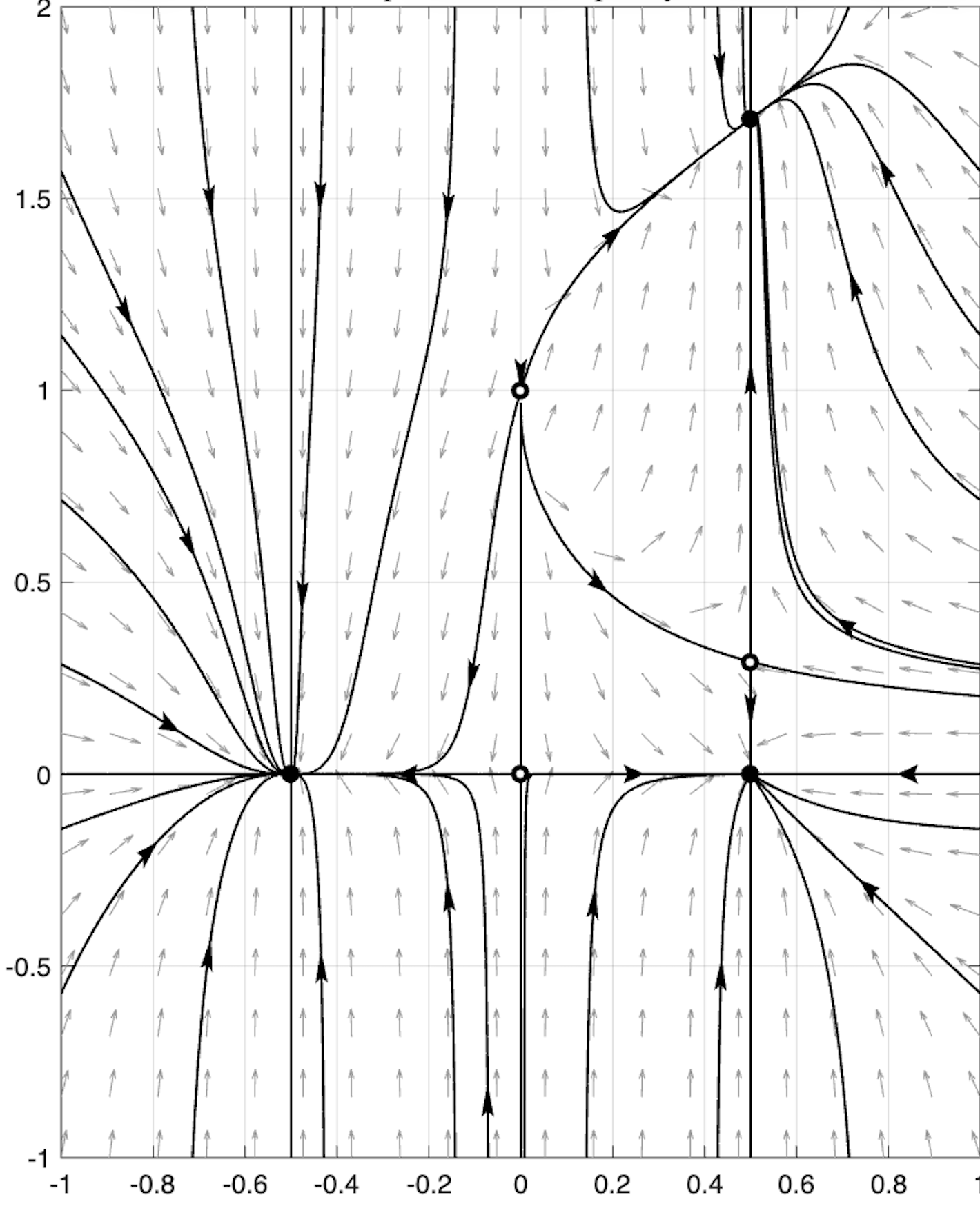}
    \small (a)
\end{minipage}
\hfill
\begin{minipage}[c]{0.50\textwidth}
    \centering
    \resizebox{\linewidth}{!}{%
    \begin{tikzpicture}[scale=1.1,
                        every node/.style={inner sep=1pt}]
      \node (bot) at (1,0) {$\emptyset$};

      \node (P) at (-1,1) {$\{P\}$};
      \node (Q) at (1,1) {$\{Q\}$};
      \node (S) at (3,1) {$\{S\}$};

      \node (PQ) at (-1,2) {$\{P,Q\}$};
      \node (PS) at (1,2) {$\{P,S\}$};
      \node (QS) at (3,2) {$\{Q,S\}$};

      \node (PQS) at (1,3) {$\{P,Q,S\}$};

      \node (Gamma0) at (-1,3.5)
        {$[-\tfrac12,\tfrac12]\times\{0\}$};

      \node (Gammap) at (3,3.5)
        {$\{\tfrac12\}\times[0,a]$};

      \node (Gam0) at (-1,4.5) {$ ([-\tfrac12,\tfrac12]\times\{0\}) \cup  \{S\} $};

      \node (Gam1) at (3,4.5) { $\{P\} \cup (\{\tfrac12\}\times[0,a])$ };
      
      \node (U) at (1,5.5)
      {$[-\tfrac12,\tfrac12]\times\{0\}
      \cup \{\tfrac12\}\times[0,a]$};

      \node (Top) at (1,6.5)
      {$\Inv(R,\Phi_\epsilon)$};

      \draw (bot) -- (P);
      \draw (bot) -- (Q);
      \draw (bot) -- (S);

      \draw (P) -- (PQ);
      \draw (Q) -- (PQ);

      \draw (P) -- (PS);
      \draw (S) -- (PS);

      \draw (Q) -- (QS);
      \draw (S) -- (QS);

      \draw (PQ) -- (PQS);
      \draw (PS) -- (PQS);
      \draw (QS) -- (PQS);

      \draw (PQ) -- (Gamma0);
      \draw (QS) -- (Gammap);

      \draw (PQS) -- (Gam0);
      \draw (PQS) -- (Gam1);
   
      \draw (Gamma0) -- (Gam0);
      \draw (Gammap) -- (Gam1);
      \draw (Gam0) -- (U);
      \draw (Gam1) -- (U);
      \draw (U) -- (Top);
    \end{tikzpicture}%
    }
    \small (b)
\end{minipage}

\captionof{figure}{(a) Phase portrait of the system~\eqref{eq:coupled-system} plotted in \((y,x)\)-coordinates for $\epsilon=1$. (b) Attractor lattice for the coupled skew-product system, where \(P:=(-\tfrac12,0), Q:=(\tfrac12,0), S:=(\tfrac12,a). \)}
\label{fig:coupled-lattice}

\end{center}

On the base space \(Y=[-\tfrac12,\tfrac12]\), the flow \(\psi\) has equilibria at \(y=-\tfrac12, y=0, y=\tfrac12.\) The endpoints \(\pm\tfrac12\) are attracting equilibria, while \(0\) is repelling. The corresponding base attractor lattice, together with the fiber attractor lattices at \(y=\pm\tfrac12\), is shown in Figure~\ref{fig:psi-fiber-lattices}.

\begin{center}
\small

\begin{tikzpicture}[scale=0.95,
  every node/.style={scale=1.2}]

  \node (fm0) at (-3.4,-1) {$\emptyset$};
  \node (fm1) at (-3.4, 0) {$\{0\}$};

  \draw (fm0) -- (fm1);

  \node (bot) at (0,-1) {$\emptyset$};
  \node (a-) at (-1,0) {$\{-\tfrac12\}$};
  \node (a+) at ( 1,0) {$\{\tfrac12\}$};
  \node (aU) at (0,1) {$\{-\tfrac12,\tfrac12\}$};
  \node (top) at (0,2) {$[-\tfrac12,\tfrac12]$};

  \draw (bot) -- (a-);
  \draw (bot) -- (a+);
  \draw (a-) -- (aU);
  \draw (a+) -- (aU);
  \draw (aU) -- (top);

  \node (fp0) at (3.4,-1) {$\emptyset$};
  \node (fp1) at (2.4,0) {$\{0\}$};
  \node (fp2) at (4.4,0) {$\{a\}$};
  \node (fp3) at (3.4,1) {$\{0,a\}$};
  \node (fp4) at (3.4,2) {$[0,a]$};

  \draw (fp0) -- (fp1);
  \draw (fp0) -- (fp2);
  \draw (fp1) -- (fp3);
  \draw (fp2) -- (fp3);
  \draw (fp3) -- (fp4);

\end{tikzpicture}

\captionof{figure}{
{\bf Left:} The fiber attractor lattice \(\sAtt_{\phi_{-\frac12}}\).
{\bf Center:} The base attractor lattice \(\sAtt(\psi)\) on \(Y=[-\tfrac12,\tfrac12]\).
{\bf Right:} The fiber attractor lattice \(\sAtt_{\phi_{\frac12}}\).
}
\label{fig:psi-fiber-lattices}

\end{center}

We now describe representative elements of the cascade
\(
\sAtt(\psi)\ltimes \Pi_0[\sAtt].
\)
For each \((A,\sigma)\in \sAtt(\psi)\ltimes \Pi_0[\sAtt]\), Theorem~\ref{thm:singular-att} produces a singular attracting neighborhood \(K(A,\sigma)\), and hence an attractor
\(
h_\epsilon(A,\sigma):=\Inv(K(A,\sigma),\Phi_\epsilon).
\)
The relevant sections of the sheaf $\Pi_0[\sAtt]$ are shown in the following Figure~\ref{fig:section-construction}.

\begin{itemize}\itemsep0.15em
\item[$\bullet$] 
Over the base attractor $A=\{-\tfrac12\}$, there is one nontrivial section, \(\sigma_P\! \left(-\tfrac12\right)=\{0\},\) so that \(h_\epsilon(\{\tfrac12\},\sigma_P)=\{P\}\).
\item[$\bullet$]
Over the base attractor $A=\{\tfrac12\}$, there are four distinct, nontrivial sections, \(\sigma_Q\!\left(\tfrac12\right)=\{0\}, \sigma_S\!\left(\tfrac12\right)=\{a\}, \sigma_{QS}\!\left(\tfrac12\right)=\{0,a\}, \text{ and } \sigma_{\widehat{QS}}\!\left(\tfrac12\right)=[0,a]. \)
Then,
\[
h_\epsilon(\{\tfrac12\},\sigma_Q)=\{Q\},
\;
h_\epsilon(\{\tfrac12\},\sigma_S)=\{S\}, 
\quad
 h_\epsilon(\{\tfrac12\}, \sigma_{QS})=\left\{Q,S\right\},
\quad
\]
\[
 h_\epsilon(\{\tfrac12\}, \sigma_{\widehat{QS}})
=\left\{\tfrac12\right\}\times[0,a].
\]
\item[$\bullet$]
Over the base attractor
\(
A=\{-\tfrac12,\tfrac12\},
\)
there are four distinct, nontrivial sections, which take each section over $\{\tfrac12\}$ and adjoin $\sigma(-\tfrac12)=\{0\}.$
\item[$\bullet$]
Over the maximal base attractor
\(
Y=[-\tfrac12,\tfrac12],
\)
there are two distinct, nontrivial, global sections: 
\(
\sigma_0(y)=\{0\}
\)
gives
\(h_\epsilon(Y,\sigma_0)=[-\tfrac12,\tfrac12]\times\{0\}\)
and
\(\sigma_{\top}(y)=\Inv\!\bigl(X,\phi_0(\cdot,\cdot,y)\bigr)\) produces the global attractor
\(h_\epsilon(Y,\sigma_{\top})=\Inv(X\times Y,\Phi_\epsilon).\)
\end{itemize}
The lattice of $\sAtt(\Phi_\epsilon)$ shown in Figure~\ref{fig:coupled-lattice}(b) consists of $\emptyset$, the above ten attractors, and two other attractors that are joins of these.

\begin{center}

\includegraphics[width=0.8\linewidth]{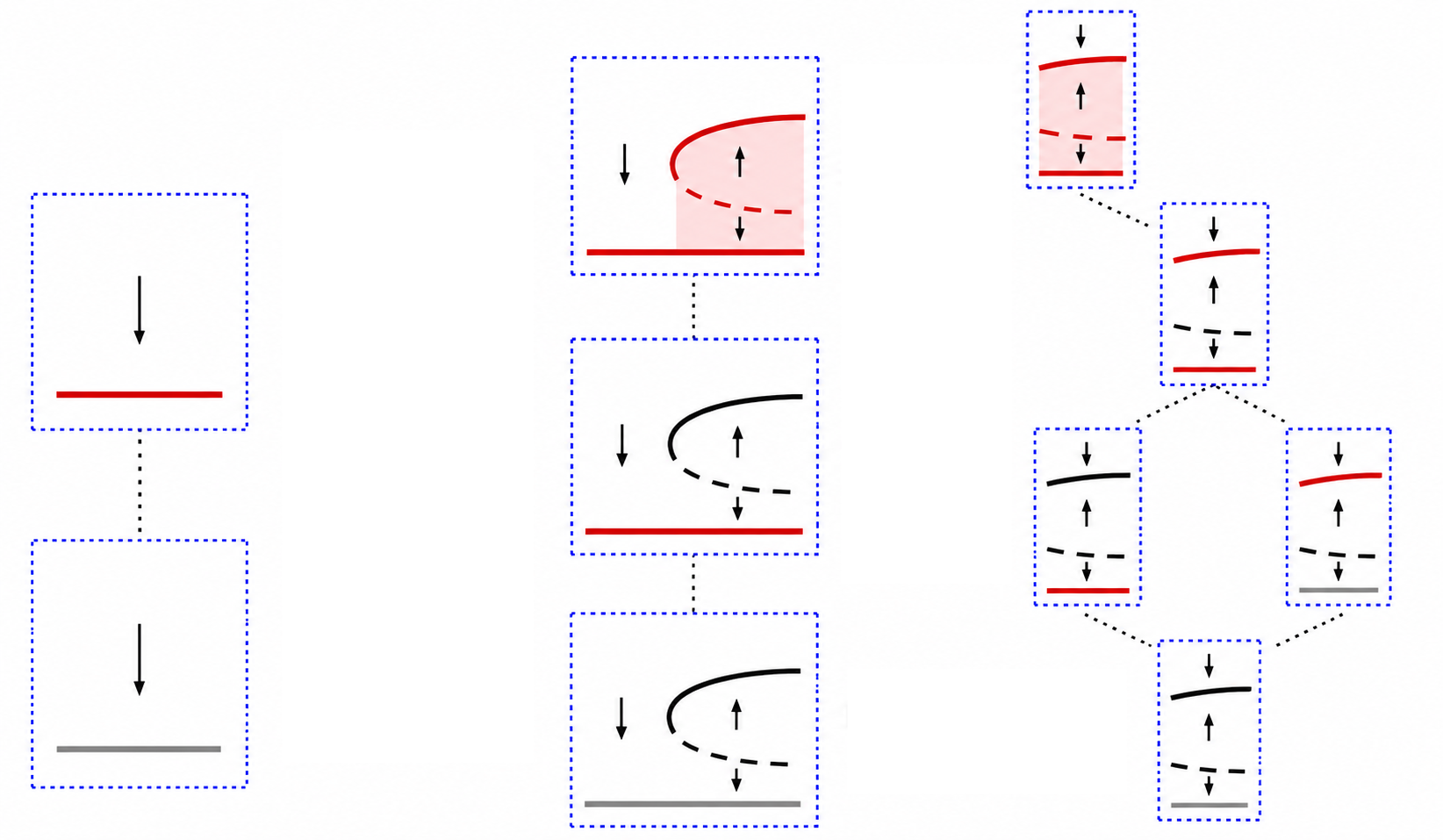}

\captionof{figure}{
Diagram of sections in $\Pi_0[\sAtt]$. 
{\bf Left:} Sections over $[-1,-\tfrac{1}{4})$ that cover the base attractor
$\{-\frac12\}$. {\bf Middle:} Sections over $[-1,1]$ that cover the base attractor
$[-\frac12,\frac12]$. 
{\bf Right:} Sections over $(\tfrac{1}{4},1]$ that cover the base attractor $\{\tfrac12\}$.
}
\label{fig:section-construction}

\end{center}

\end{example}

In this example, the procedure recovers the full attractor lattice of the coupled skew-product system: \( \big\langle h_\epsilon\big(\sAtt(\psi)\ltimes \Pi_0[\sAtt]\big)\big\rangle_{\mathrm{Lat}}=\sAtt(\Phi_\epsilon)\).

\begin{remark}
Note that in this example, the attracting neighborhood lattice $\mathcal{K}(\mathcal{L})$, using any choice of $\hat K$'s in the construction of Theorem~\ref{thm:singular-att}, is valid for all $\epsilon>0$. 
\end{remark}

\subsection{Example highlighting the role of sections in the skew-product attractor.}

\begin{example}\label{ex:rotatingVF}
We revisit Example~9.6 from~\cite{DKV}. Let
\[
F(x,y)=\bigl(-x(x+1)(x-1),-y\bigr)
\]
on $[-2,2]^2\subset\R^2$. For each $\theta\in S^{1}$ we define a rotated system $F_\theta = R_{-\theta}\,F\!\bigl(R_\theta(\,\cdot\,)\bigr),$
and glue at $\theta=0$ and $\theta=\pi$.  This yields a parametrized flow $\phi\colon S^{1}\to \mathrm{DS}(\R^2)$. Now, each fiber flow $F_\theta$ has the same five attractors:
\[
\emptyset,\qquad 
A_{-}(\theta)=\{R_\theta(-1,0)\},\qquad 
A_{+}(\theta)=\{R_\theta(1,0)\},
\]
\[
A_{\pm}(\theta)=A_{-}(\theta)\cup A_{+}(\theta),\qquad
X(\theta)=R_\theta(X).
\]
Thus every stalk of $\phi_*^{-1}[\sAtt]$ is the five–element lattice
$\{\emptyset, A_{-},A_{+},A_{\pm},X\}$.

\medskip
\noindent
Although the stalks are identical, the sheaf is not constant: the individual fixed points cannot be continued around $S^{1}$, since $A_{+}(\theta)$ and $A_{-}(\theta)$ exchange when $\theta$ increases by $\pi$. Thus, only three attractors admit continuous continuation around the circle:
\[
\sigma_{\emptyset}(\theta)=\emptyset,\qquad
\sigma_{\pm}(\theta)=A_{\pm}(\theta),\qquad
\sigma_X(\theta)=X(\theta).
\]
\medskip
\noindent
We find now our cascade product. The base flow $\psi$ on $S^{1}$ has attractor lattice $\sAtt(\psi)=\{\emptyset,S^{1}\}$.  For the empty attractor, the only compatible section of $\phi_*^{-1}\sAtt$ is $\sigma_{\emptyset}$. Over the attractor $S^{1}$, precisely the three global sections above are compatible.  Hence, $\sAtt(\psi)\ltimes \Pi[\sAtt_{\phi}]=\{\emptyset,\ A_{\pm},\ X\},$ and $\sAtt(\psi)\ltimes \Pi_0[\sAtt]=\{\emptyset,\ (S^1,\sigma_{\pm}),\ (S^1,\sigma_X)\}.$ Applying the realization map $h_\epsilon(A,\sigma)=\Inv(K(A,\sigma),\Phi_\epsilon)$ yields $\sAtt(\Phi_\epsilon)$.

\end{example}

This example emphasizes the importance of  sections in the attractor sheaf. Although each fiber has the same five-element attractor lattice, the individual attractors \(A_+\) and \(A_-\) cannot be continued globally around \(S^1\) because they interchange under the rotation. Consequently, the cascade product is determined by the globally compatible sections rather than the local attractors of individual fibers.

\subsection{Example of embedding failure for a skew-product flow}

\begin{example}\label{ex:not_inj}
Consider the skew-product system on $X\times Y=\mathbb{R}\times\mathbb{R}$:
\[
\begin{aligned}
\dot x &= -x\big((x-1)^2 + y^2 - \tfrac12\big), \label{eq:fast-x-ex}\\
\dot y &= y(1-y^2). 
\end{aligned}
\]
If $y$ is treated as a parameter space with no dynamics, the subsystem admits equilibria
\[
x=0
\quad\text{and}\quad
(x-1)^2+y^2=\tfrac12.
\]
For $|y|\le \sqrt{1/2}$, the latter defines an isola consisting of two branches. The upper branch is attracting for the parametrized dynamics. Thus, in the cascade lattice $\sAtt(\psi)\ltimes \Pi_0[\sAtt(\phi)]$, we have two distinct sections $\sigma_0(y)=\{0\}$ and $\sigma_{\mathrm{iso}}(y)$.

In the coupled system, any initial condition with $y_0\in(0,1)$, one has $y(t)\to 1$;  for $y_0\in(-1,0)$, one has $y(t)\to -1$. In particular, trajectories are transported away from the parameter region $|y|\le \sqrt{1/2},$ beyond which the isola disappears, and trajectories go toward the invariant set $x=0$.

Let  $u=(A_Y,\sigma_0),  v=(A_Y,\sigma_{\mathrm{iso}})$ be the cascade elements corresponding to the zero and isola sections over $A_Y=[-1,1]$ where $u\neq v$. However, $h_\epsilon(u)=h_\epsilon(v)=[-1,1]\times\{0\}.$ Therefore the map $h_\epsilon$ is just a homomorphism and not an embedding. Indeed, for any $\epsilon>0,$ the attractor lattice $\sAtt(\Phi_\epsilon)\approx \sAtt(\psi)$.

\begin{center}
\includegraphics[width=0.4\linewidth]{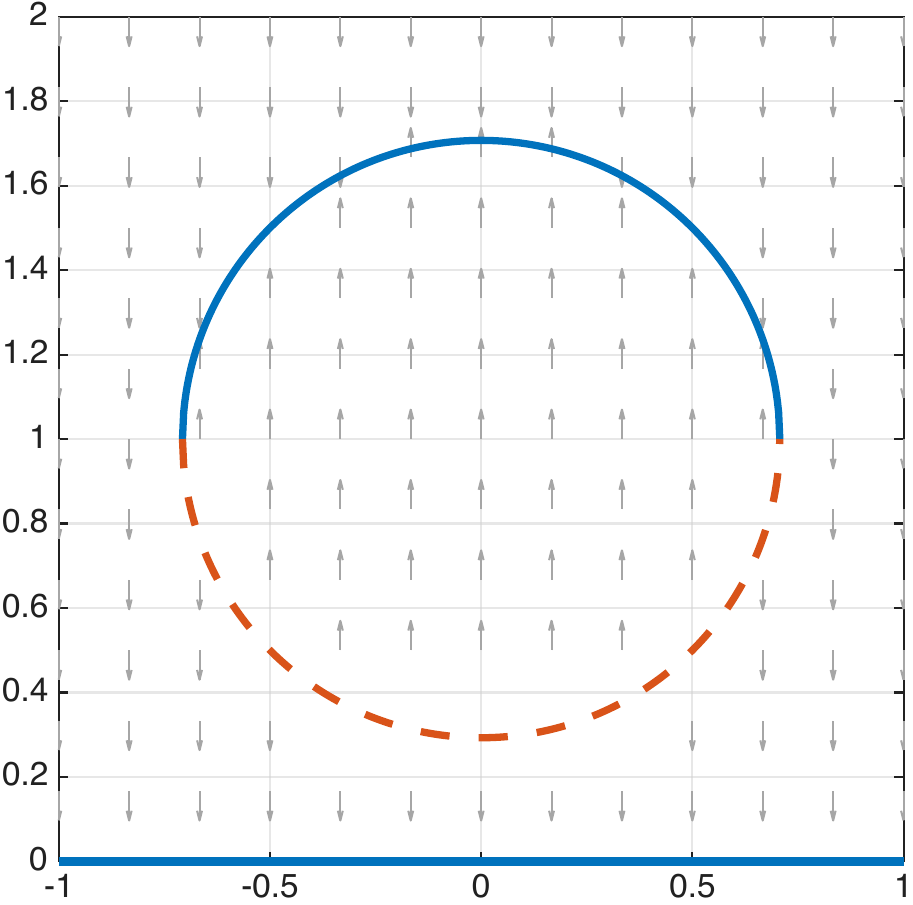}

\captionof{figure}{
Parametrized flow ($\dot y=0$). The curve $(x-1)^2+y^2=\tfrac12$ defines an isola of equilibria 
for $|y|\le\sqrt{1/2}$ in the $(y,x)$-plane.
}
\label{fig:isola-frozen}
\end{center}
\end{example}
This example shows that the attractor sheaf $\Pi_0[\sAtt]$ can have structure that is not seen in the dynamics of the skew-product.

\subsection{Example for the necessity of the slow flow on the base of a skew-product system}

To introduce the dependence on slow parameter dynamics, we consider the following example.
\begin{example} \label{ex:coupled2}
Consider the coupled system
\begin{equation}
    \begin{aligned}
    \dot{x}_1 &= x_1 - x_1^3 + u, \\
    \dot{x}_2 &= -x_2, \\
    \dot{u} &= \epsilon\big(-2\pi v + u(1 - u^2 - v^2)\big), \\
    \dot{v} &= \epsilon\big(2\pi u + v(1 - u^2 - v^2)\big).
    \end{aligned}
\end{equation}
where $\epsilon > 0$ controls the relative time scale between the variables $(x_1,x_2)$ and the parameter variables $(u,v)$.
Over $u\in[-1,1]$, the parametrized system
\[
\dot{x}_1 = x_1 - x_1^3 + u
\]
exhibits a classical hysteresis structure, with equilibria determined by $ x_1 - x_1^3 + u = 0.$ As $\epsilon \to 0$, a separation of time scales emerges. The variable $x_1$ rapidly relaxes toward the  stable equilibria of the parametrized system, while the base system on $(u,v)$ has the unit circle as a limit cycle, which is the only nontrivial attactor.
For small $\epsilon>0$, trajectories remain close to the critical set
\[
\mathcal{C} = \{(x_1,x_2,u,v) : x_1 - x_1^3 + u = 0, x_2=0, u^2+v^2=1\}
\]
except near fold points, producing transitions between branches. This generates a classical hysteresis loop. 

As shown in Figure~\ref{fig:slow-flow-example-2d}(Left), there are two nontrivial elements in the cascade. There is a section $\sigma$ over a neighborhood of $S^1$, and  $\hat K=[-1.5,1.5]\times[-\delta,\delta]\times B_\delta(S^1)$, where $0<\delta<1$ and $B_\delta(S^1)$ is the annulus of width $2\delta$ around the unit circle, is a singular attracting neighborhood for $(S^1,\sigma)$. There is the global section over the phase space $X\times Y=[-2,2]^2\times[-2,2]^2$, and $X\times Y$ is a singular attracting neighborhood. 

This example demonstrates that even though $ \widehat{K}$ is an attracting neighborhood for all $\epsilon>0$ by Corollary~\ref{cor:cylinder-block}, the geometric structure of the resulting attractor depends critically on $\epsilon$. For $\epsilon$ sufficiently small, the attractor is a periodic orbit that  oscillates near the hysteresis loop. As $\epsilon$ increases this periodic orbit moves away from the critical set, and at $\epsilon=1$, a bifurcation has occurred and there are two stable periodic orbits near $x_1=\pm 1$ (Figure~\ref{fig:slow-flow-example-2d} only shows the bottom orbit), so that the attractor lattice is more complicated.

The global attractor consists of the unstable fixed point at the origin, the attracting periodic orbit(s), and connecting orbits. Indeed, $\sRC(\psi)$ consists of two recurrent components, the fixed point at the origin and the periodic orbit on the unit circle. Therefore Corollary~\ref{cor:RC_projection} implies that all recurrent components in the full system project onto the fixed point or the periodic orbit.

\begin{center}
\includegraphics[width=0.25\linewidth]{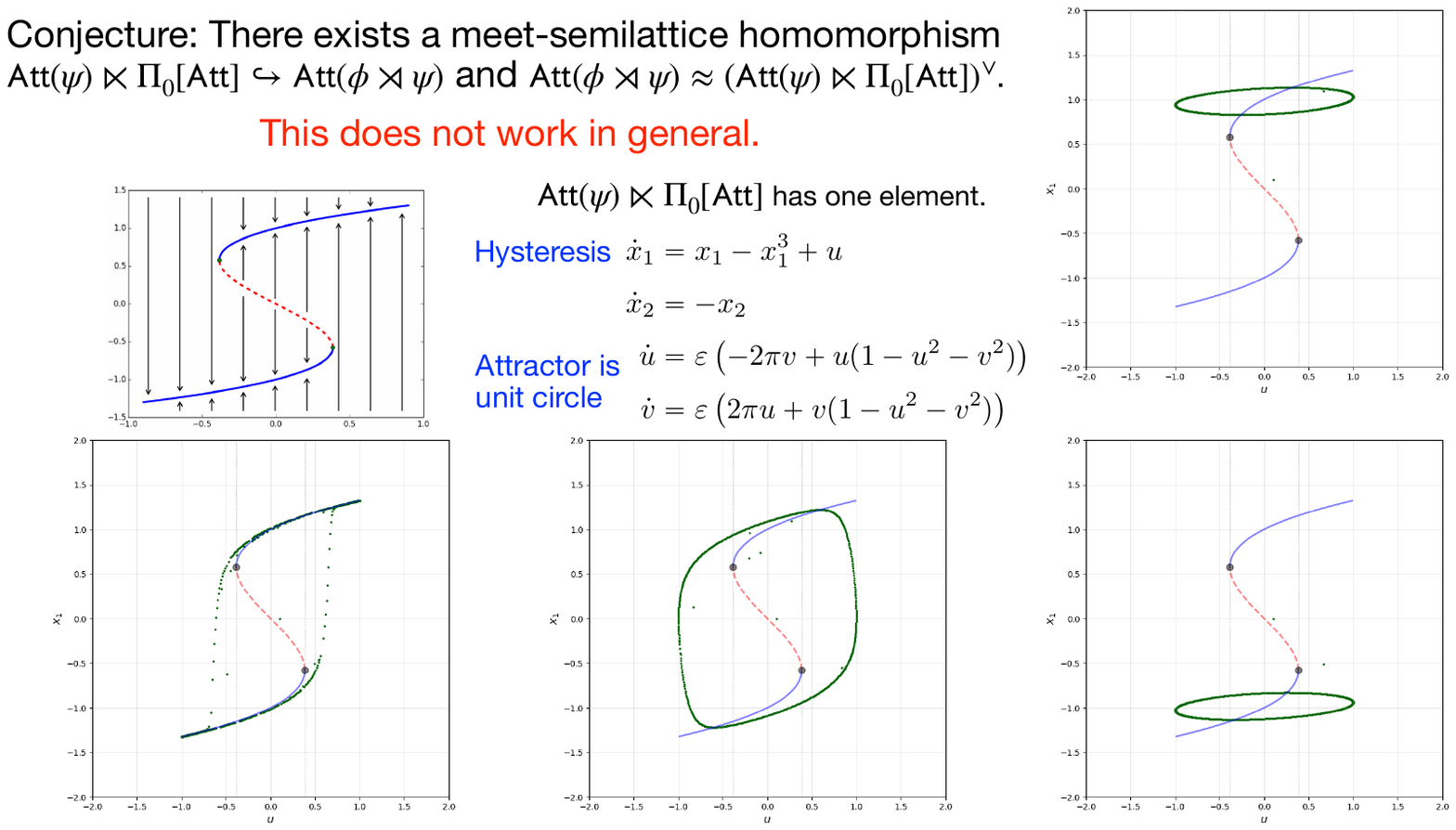}\;
\includegraphics[width=0.2\linewidth]{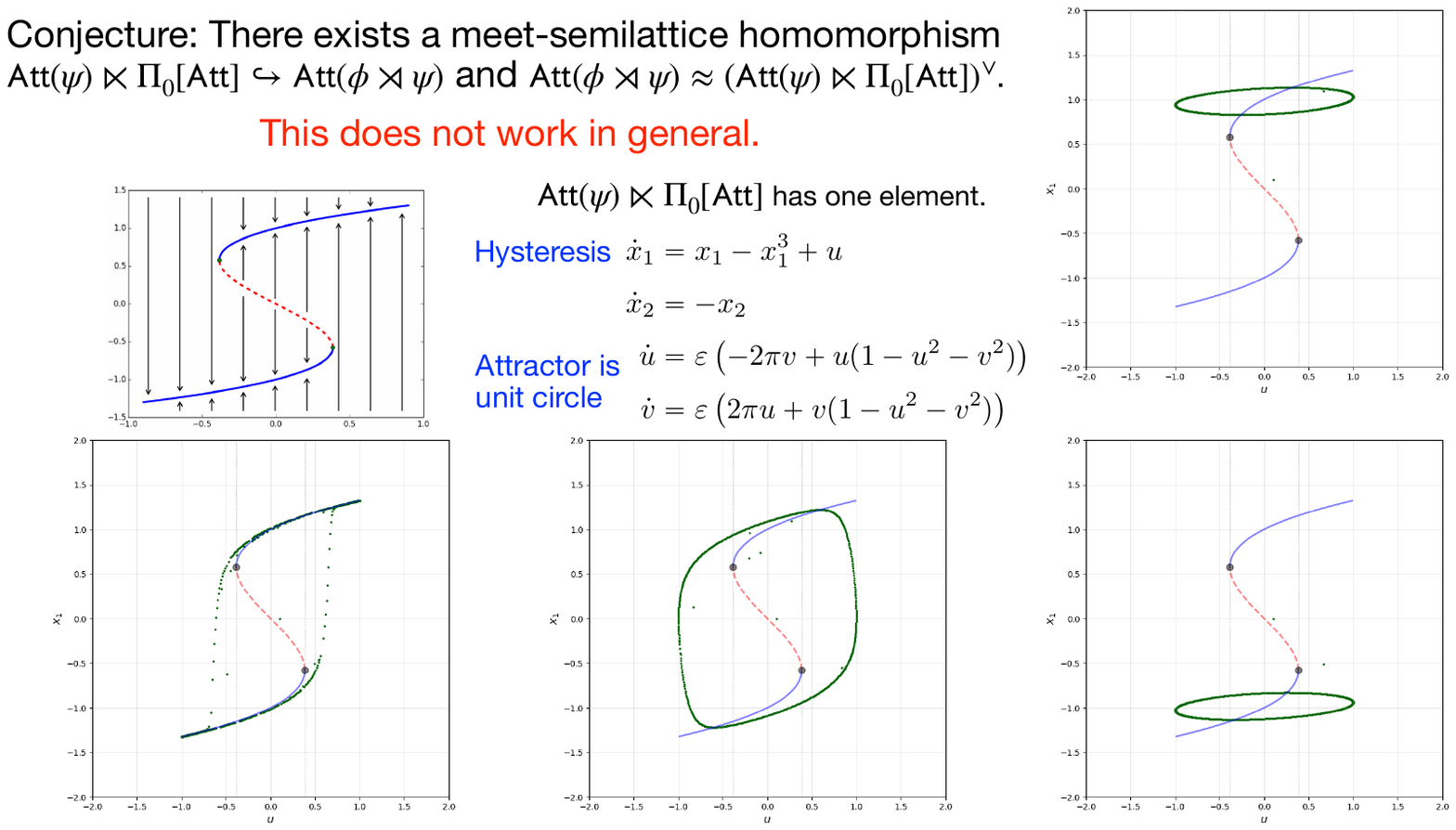}\;
\includegraphics[width=0.2\linewidth]{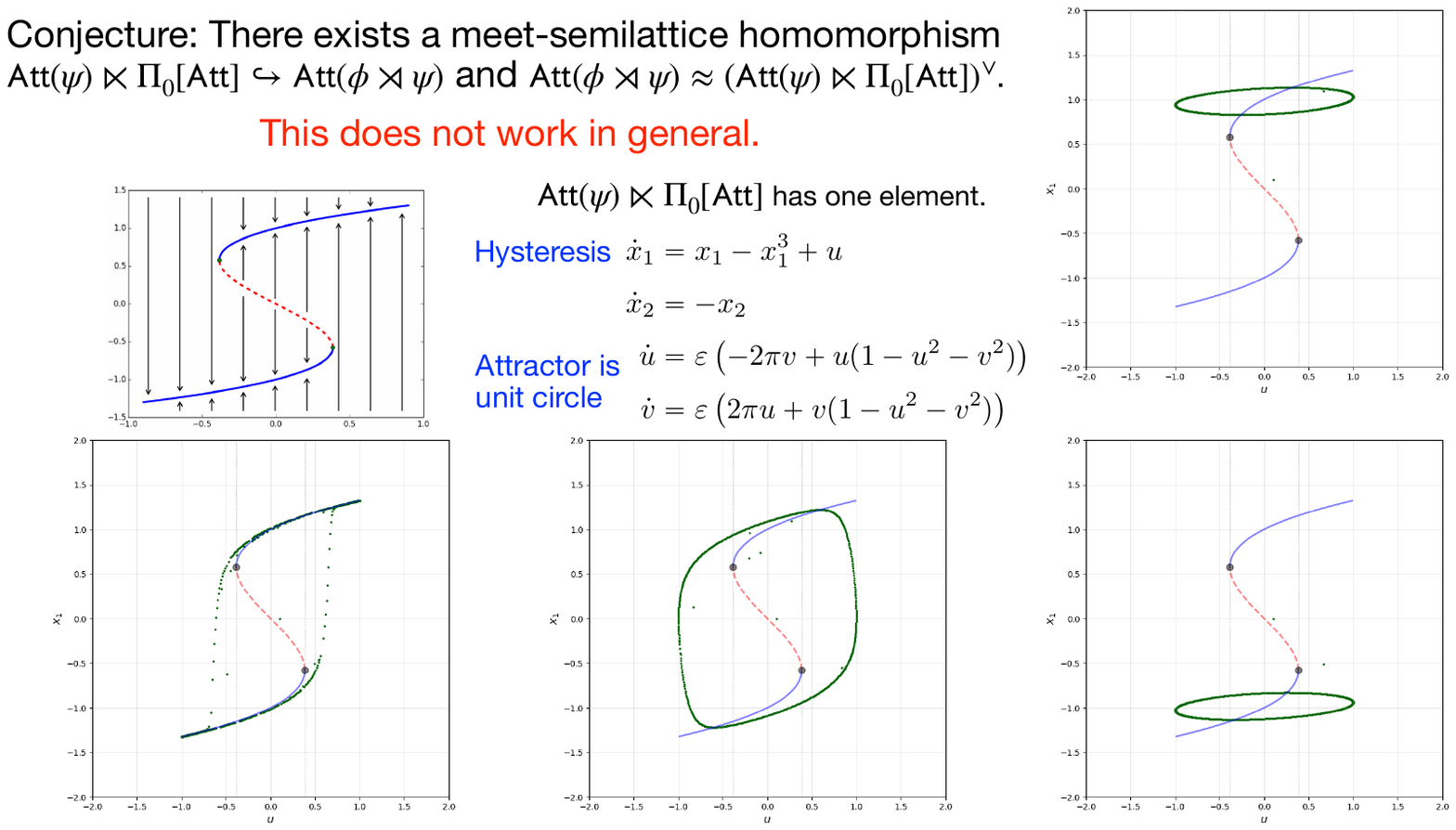}\;
\includegraphics[width=0.2\linewidth]{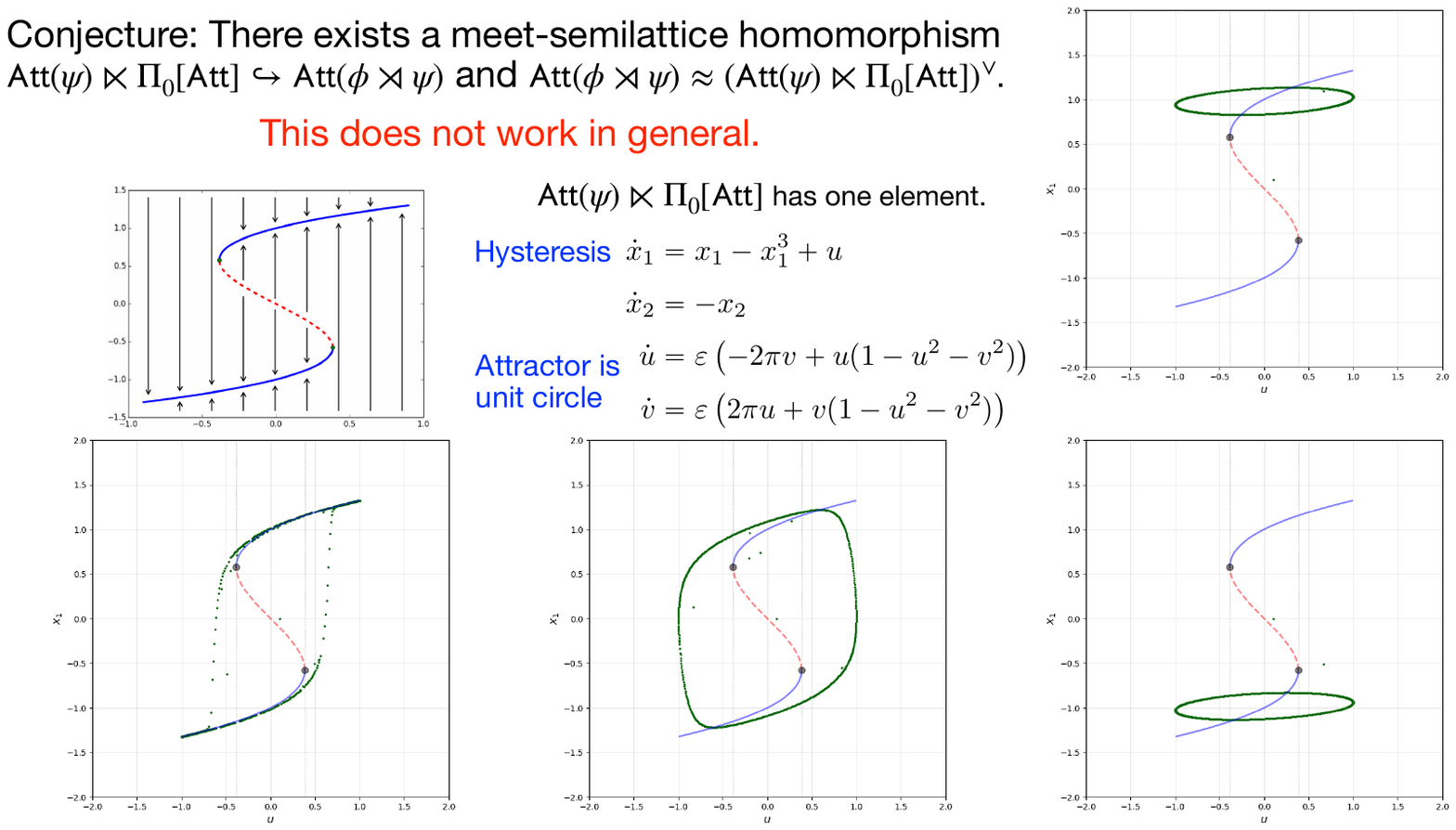}
\captionof{figure}{{\bf Left:} Projection of parametrized system ($\dot u=\dot v=0$) onto the $(x_1,u)$-plane is a  hysteresis curve. {\bf Right:} Projections of the attracting invariant set into $(x_1,u)$-space for increasing values of $\epsilon=0.01, 0.1, 1$. 
}
\label{fig:slow-flow-example-2d}
\end{center}

\end{example}

\begin{center}
    \includegraphics[width=0.5\linewidth]{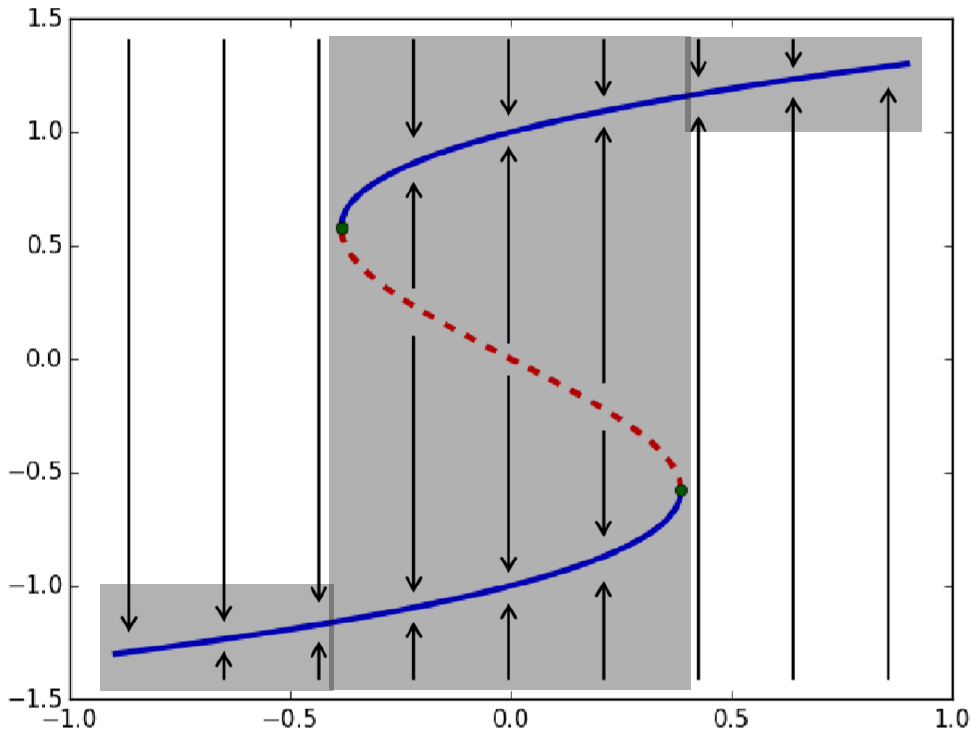}
    \captionof{figure}{Projection of singular attracting neighborhood (shaded gray region) that is the union of three sets of the form $N\times V_i$ as in the construction of Theorem~\ref{thm:singular-att}.}
    \label{fig:hysteresis-tight-nbhd}
\end{center}

\begin{remark}
In Example~\ref{ex:coupled}, the singular attracting neighborhood $\hat K=[-1.5,1.5]\times[-\delta,\delta]\times B_\delta(S^1)$ is actually an attracting neighborhood for all $\epsilon>0$. However, there are singular attracting neighborhoods arising from other choices in the construction in Theorem~\ref{thm:singular-att} that are not attracting neighborhoods for all $\epsilon>0$. An example is $\hat K'=K\times[-\delta,\delta]\times B_\delta(S^1)$ where $K$ is the shaded
gray region in Figure~\ref{fig:hysteresis-tight-nbhd}; compare with Figure~\ref{fig:slow-flow-example-2d}(right) with $\epsilon=0.1$.
\end{remark}

\bibliographystyle{spmpsci}
\bibliography{KMVc-biblist}

\end{document}